\newtheorem{theorem}{Theorem}[section] 
\newtheorem{lemma}[theorem]{Lemma}
\newtheorem{remark}[theorem]{Remark}
\numberwithin{equation}{section}
\def\matlab{MATLAB}
\def\R{\mathbb{R}}
\def\C{\mathbb{C}}
\def\bigO{\mathcal{O}}
\def\imagunit{\mathbf{i}}
\DeclareMathOperator{\Arg}{Arg}
\DeclareMathOperator*{\argmin}{arg\,min}
\DeclareMathOperator{\bd}{\mathrm{bd}}
\DeclareMathOperator{\interior}{\mathrm{int}}
\def\e{\mathrm{e}}
\def\eit{\eix{\theta}}
\def\emit{\emix{\theta}}
\newcommand{\eix}[1]{\e^{\imagunit #1}}
\newcommand{\emix}[1]{\e^{-\imagunit #1}}
\def\eps{\varepsilon}
\def\smin{\sigma_{\min}}
\def\aleps{\alpha_\eps}
\def\rhoeps{\rho_\eps}
\def\dtu{\tau(A,B)}
\def\Kcal{\mathcal{K}}
\def\Kcon{\Kcal(A)}
\def\Kinv{\Kcon^{-1}}
\def\n{n}
\def\sffn{f_\gamma}
\def\sfset{\mathcal{F}}
\def\sgfn{g_\gamma}
\def\sgset{\mathcal{G}}
\def\shfn{h_\gamma}
\def\shset{\mathcal{H}}
\def\costh{\cos \theta}
\renewcommand{\Re}{\mathrm{Re}\,}
\renewcommand{\Im}{\mathrm{Im}\,}
\def\beq{\begin{equation}}
\def\eeq{\end{equation}}
\def\beqs{\begin{equation*}}
\def\eeqs{\end{equation*}}
\def\bseq{\begin{subequations}}
\def\eseq{\end{subequations}}
\newcommand{\algnote}[1]{\footnotesize \sc{Note: \it#1 } }
\title{Fast Interpolation-based Globality Certificates for\\Computing Kreiss Constants and the\\Distance to Uncontrollability}
\author{Tim Mitchell\thanks{
Max Planck Institute for Dynamics of Complex Technical Systems, Magdeburg, 39106 Germany \href{mailto:mitchell@mpi-magdeburg.mpg.de}{\texttt{mitchell@mpi-magdeburg.mpg.de}}.
The author's visits to the 
Courant Institute of Mathematical Sciences, 
New York University were supported by the U.S. National Science Foundation grant DMS-1620083}
}
\date{\small Submitted: October 2nd, 2019\\ Revised: August 9th, 2020, November 11th, 2020}
\begin{document}
\maketitle
\vspace{-0.5cm}
\begin{abstract}
We propose a new approach to computing global minimizers
of singular value functions in two real variables.
Specifically, we present new algorithms to compute the Kreiss constant of a matrix
and the distance to uncontrollability of a linear control system, both to arbitrary accuracy.
Previous state-of-the-art methods for these two quantities rely on 2D level-set tests 
that are based on solving large eigenvalue problems.
Consequently, these methods are costly, i.e., $\bigO(n^6)$ work using dense eigensolvers,
and often multiple tests are needed before convergence.
Divide-and-conquer techniques have been proposed that reduce the work complexity to
$\bigO(n^4)$ on average and $\bigO(n^5)$ in the worst case, but
these variants are nevertheless still very expensive and can be numerically unreliable.
In contrast, our new interpolation-based globality certificates 
perform level-set tests by building interpolant approximations to certain one-variable continuous functions
that are both relatively cheap and numerically robust to evaluate.
Our new approach has a $\bigO(kn^3)$ work complexity and uses $\bigO(n^2)$ memory, 
where $k$ is the number of function evaluations necessary to build the interpolants.
Not only is this interpolation process mostly ``embarrassingly parallel," 
but also low-fidelity approximations typically suffice for all but the final interpolant, which must be built to high accuracy.  
Even without taking advantage of the aforementioned parallelism, $k$ is sufficiently small
that our new approach is generally orders of magnitude faster than the previous state-of-the-art.
\end{abstract}

\medskip
\noindent
\textbf{Keywords:}
transient growth, robust stability, controllability, pseudospectra

\medskip
\noindent
\textbf{Notation:} $\| \cdot \|$ denotes the spectral norm, $\smin(\cdot)$ the smallest singular value, 
$\Lambda(\cdot)$ the spectrum, and
$\kappa(\cdot)$ the condition number of a matrix with respect to the spectral norm.
A matrix pencil $A - \lambda B$ and its spectrum
are denoted $(A,B)$ and  $\Lambda(A,B)$, respectively,
and $(A,B)$ is a regular matrix pencil if there exists at least one $\lambda \in \C$
such that $\det (A - \lambda B) \neq 0$.
A matrix $A \in \C^{2\n\times 2\n}$ is (skew-)Hamiltonian if $(JA)^* = JA$ ($A^*J = JA$),
where \mbox{$J = \begin{bsmallmatrix} 0 & I \\ -I & 0 \end{bsmallmatrix}$}.
A matrix pencil $(A,B)$ with $A,B \in \C^{2\n\times 2\n}$ 
is skew-Hamiltonian-Hamiltonian (sHH) if $B$ is skew-Hamiltonian and $A$ is Hamiltonian.
Euler's number, $2.71828\ldots$, is denoted $\mathrm{e}$, 
while $\bd \mathcal{A}$ and $\interior \mathcal{A}$
are the boundary and interior of a set $\mathcal{A}$, respectively.

\section{Introduction}
We begin with two important quantities that can be written as global optimization
problems of certain singular value functions in two real variables and describe existing algorithms
for computing these quantities and their limitations.
In this discussion, we also establish necessary background and context for 
understanding our new approach.
\subsection{The distance to uncontrollability}
\label{sec:intro_dtu}
Consider the linear control system
\beq
	\label{eq:AxBu}
	\dot x = Ax + Bu,
\eeq
where $A \in \C^{\n \times \n}$, $B \in \C^{\n \times m}$, and 
the state $x \in \C^\n$ and control input $u \in \C^m$ are dependent on time.
System \eqref{eq:AxBu} is \emph{controllable} if, for any pair of initial and final states $x(0)$ and $x(T)$, $T > 0$,
there exists a control function $u(\cdot)$ that takes $x(0)$~to~$x(T)$. 
However, a more robust measure is the \emph{distance to uncontrollability},
which was introduced in \cite{Pai81} and we denote by $\dtu$.
Given a system \eqref{eq:AxBu}, $\dtu$ specifies the distance to the nearest matrix pair $A_\mathrm{uc},B_\mathrm{uc}$
such that \mbox{$\dot x = A_\mathrm{uc}x + B_\mathrm{uc}u$} is uncontrollable,
with the distance being zero if \eqref{eq:AxBu} is uncontrollable and positive otherwise.\footnote{
A new alternative for assessing the distance to uncontrollability
of systems with input \emph{and output}
was recently proposed in \cite{MarFG18,FazGM19}, but here we focus on the standard definition.}
In~\cite{Eis84}, Eising showed that $\dtu$ is equal to the globally minimal value of the following singular value function 
\beq
	\label{eq:dtu}
	\dtu = \min_{z \in \C} \smin \left( \begin{bmatrix} A - zI & B\end{bmatrix} \right).
\eeq

While many methods have been proposed for $\dtu$ over the years (see \cite[p.~990]{Gu00} for a historical overview), 
the first polynomial-time algorithm to correctly estimate $\dtu$ to within a constant factor
is due to Gu \cite{Gu00}.
The basis of Gu's algorithm is a 2D level-set test.
Given a guess $\gamma \geq 0$ for the value of $\dtu$ and a parameter $\eta > 0$,
Gu's 2D level-set test verifies whether or not there exists one or more points $\tilde z \in \C$ such that 
\beq
	\label{eq:gu_test}
	\gamma =
 	\smin \left( \begin{bmatrix} A - \tilde zI & B\end{bmatrix} \right)
	= 
	\smin \left( \begin{bmatrix} A - (\tilde z+\eta)I & B\end{bmatrix} \right)
\eeq
holds.
Moreover, if such points exist, Gu's test also returns their values.
Clearly $\dtu \leq \gamma$ holds if \eqref{eq:gu_test} is satisfied by some $\tilde z$.
However, if there are no such points, \cite[Theorem~3.1]{Gu00}
states that the following lower bound must instead hold:
\beq
	\label{eq:dtu_lb}
	\dtu > \gamma - \tfrac{\eta}{2}.
\eeq	
By starting with $\gamma_0 \geq \dtu$
and using $\gamma_{k+1} \coloneqq \tfrac{1}{2}\gamma_k$ and $\eta_{k+1} \coloneqq \gamma_{k+1}$,
Gu's method for $\dtu$ repeatedly applies his 2D level-set test 
until it no longer finds any points $\tilde z \in \C$ satisfying \eqref{eq:gu_test}.
Assuming exact arithmetic is used, at termination, the last estimate is guaranteed to be within a factor of two of $\dtu$ (in either direction).
The cost of Gu's method is dominated by the need to compute 
all real eigenvalues of a large \emph{generalized} eigenvalue problem of order $2n^2$ 
for each 2D level-set test, as described in \cite[Section~3.2]{Gu00}.
Consequently, when using standard dense eigensolvers, such as those based on the QZ algorithm, 
Gu's estimation algorithm has a $\bigO(n^6)$ work complexity\footnote{As in
\cite{BurLO04},
work complexities are given in terms of 
considering all computations of singular values, eigenvalues, etc., as \emph{atomic operations} 
with cubic costs in the dimensions of the associated matrices,
and we further assume that these costs reduce to linear if sparse methods are applicable.
}
and requires $\bigO(n^4)$ memory.
In practice, this limits Gu's method to all but the smallest of problems,
and furthermore, in inexact arithmetic, his 2D level-set test can fail due to rounding errors,
particularly as $\eta$ gets closer and closer to zero; see \cite[p.~358]{BurLO04}.

Using Gu's 2D level-set test, 
Burke, Lewis, and Overton \cite{BurLO04} proposed the first two algorithms to compute 
$\dtu$ to arbitrary accuracy, again assuming exact arithmetic.
Their first method \cite[Algorithm~5.2]{BurLO04} uses Gu's test in a trisection iteration 
in an effort to minimize the speed at which $\eta\to0$ as trisection converges to $\dtu$.
In turn, this helps reduce the chances of Gu's test failing numerically before the 
estimate to $\dtu$ has been sufficiently resolved.
We forgo describing trisection in detail, but mention that trisection is not a panacea,
since if $\dtu$ is very small, so must $\eta$ be to resolve $\dtu$ to even a single digit of accuracy; 
see \cite[Lemma~B.1 and Corollary~B.2]{Mit19}.
The authors' second method \cite[Algorithm~5.3]{BurLO04} 
combines Gu's test and local optimization to yield an optimization-with-restarts iteration.
As $\smin \left( \begin{bsmallmatrix} A - zI & B\end{bsmallmatrix} \right)$ is 
Lipschitz and will generally be smooth, even at minimizers, finding locally optimally approximations
to $\dtu$ via standard optimization techniques is straightforward and can be done with few function evaluations. 
Computing the value, gradient, and Hessian of 
$\smin \left( \begin{bsmallmatrix} A - zI & B\end{bsmallmatrix} \right)$ is $\bigO(n^3)$ work via standard dense SVD methods,
while the function value and gradient can be obtained in just $\bigO(n)$ work via sparse SVD methods.
Thus, given a local minimizer $z_k$ of \eqref{eq:dtu} with 
$f_k = \smin \left( \begin{bsmallmatrix} A - z_kI & B\end{bsmallmatrix} \right)$,  
\cite[Algorithm~5.3]{BurLO04} uses Gu's test with carefully chosen values 
for parameters $\gamma$ and $\eta$ such that the algorithm terminates if $f_k$ is sufficiently close to $\dtu$
in a relative sense.  If not, 
Gu's test still provides new level-set points such that running optimization from them must
yield a better (lower) minimizer.
Since the objective function in \eqref{eq:dtu} is semi-algebraic, 
it has a finite number of locally minimal function values; see \cite[p.~359]{BurLO04}.
As a result, \cite[Algorithm~5.3]{BurLO04} must terminate at a globally optimal minimizer
within a finite number of optimization restarts (for brevity, throughout the paper we assume that 
optimization finds stationary points exactly).
In practice, the number of restarts is typically only a handful, which makes it many times faster
than the trisection iteration.

Shortly thereafter, \cite{GuMOetal06} showed how the large generalized eigenvalue problem in Gu's 2D level-set test 
can be reduced to a \emph{standard} eigenvalue problem (but still of order $2n^2$),
and then proposed a divide-and-conquer technique to compute the relevant eigenvalues
in $\bigO(n^4)$ work on average and $\bigO(n^5)$ in the worst case.
While divide-and-conquer enables asymptotically faster versions of all of the methods of \cite{Gu00,BurLO04} described above, it does not address the aforementioned numerical issues inherent 
in Gu's 2D level-set test.  In fact, divide-and-conquer introduces additional numerical uncertainties,
as it relies on sparse shift-and-invert eigensolver techniques.
As we mentioned in \cite[Section~8]{Mit19}, one issue is that divide-and-conquer assumes that
such eigensolvers always return the closest eigenvalues to a given shift, which while reasonable, is 
not always true in practice.  Furthermore, sparse eigensolvers such as \texttt{eigs} in \matlab\ 
can have convergence issues when the norm of the matrix in question gets large; see \cite[p.~500]{GuMOetal06}.
Nevertheless, it is not always clear whether divide-and-conquer will be more or less reliable
than Gu's original test using dense eigensolvers, and 
indeed the experiments of \cite[Section~4]{GuMOetal06}
do demonstrate that divide-and-conquer can be a much faster and effective alternative for computing $\dtu$.
For a thorough discussion of the numerical difficulties of both Gu's original approach
and divide-and-conquer, see \cite[Sections~4.1, 5.2, and 5.3]{GuMOetal06} and the references within.

Finally, we recently showed how the numerical reliability 
of all of these $\dtu$ methods can be greatly improved via a crucial 
reinterpretation and modified version of Gu's 2D level-set test; see \cite[Key~Remark~6.3]{Mit19}.

\subsection{The Kreiss constant of a matrix}
\label{sec:intro_kreiss}
We now turn to another important quantity, namely, the \emph{Kreiss constant} of a matrix $A \in \C^{n \times n}$,
which comes in continuous- and discrete-time variants that respectively bound
the transient behavior of $\dot x = Ax$ and $x_{k+1} = Ax_k$.
More specifically, the discrete-time version of the Kreiss Matrix Theorem \cite{Kre62}, after being refined by many authors over nearly thirty years, states that \cite[Theorem~18.1]{TreE05}
\beq
	\label{eq:kreiss_thm_disc}
	\Kcon \leq \sup_{k \geq 0} \|A^k\| \leq \e\n\Kcon,
\eeq
where the \emph{Kreiss constant} $\Kcon$ has 
two equivalent definitions \cite[p.~143]{TreE05}
\bseq	
	\label{eq:kcon_disc}
	\begin{alignat}{3}	
	\label{eq:k2d_disc}
	\Kcon 
	=& \sup_{z \in \C, |z| > 1} && \, (|z|  - 1) \| (z I - A)^{-1} \|, \\
	\label{eq:k1d_disc}
	=& \quad \sup_{\eps > 0} && \, \frac{\rhoeps(A) - 1}{\eps},
	\end{alignat}
\eseq
and the \emph{$\eps$-pseudospectral radius} $\rhoeps(\cdot)$ is defined by
\bseq
\begin{align}
	\rhoeps(A) 	&= \max \{ |z|  : z \in \Lambda(A+\Delta), \|\Delta\| \leq \eps \}, \\
				&= \max \{ |z|  : z \in \C, \| (zI - A)^{-1} \| \geq \eps^{-1} \}.
\end{align}
\eseq
For $\eps =0$, $\rhoeps(A) = \rho(A)$, the \emph{spectral radius} of $A$, and so it is easy
to see that $\Kcon = \infty$ if $\rho(A) > 1$.
Furthermore, if $A$ is normal and $\rho(A) \leq 1$, then $\Kcon = 1$, 
which is the minimum value $\Kcon$ can take.

The continuous-time Kreiss Matrix Theorem states that
\cite[Theorem~18.5]{TreE05}
\beq
	\label{eq:kreiss_thm_cont}
	\Kcon \leq \sup_{t \geq 0} \|\e^{tA}\| \leq \e\n\Kcon,
\eeq
where this version of $\Kcon$ also has two equivalent definitions \cite[eq.~14.7]{TreE05}
\bseq	
	\label{eq:kcon_cont}
	\begin{alignat}{3}
	\label{eq:k2d_cont}
	\Kcon 
	= & \sup_{z \in \C, \Re z > 0} && \, (\Re z) \| (z I - A)^{-1} \|, \\
	\label{eq:k1d_cont}
	= & \quad \ \sup_{\eps > 0} && \, \frac{\aleps(A)}{\eps},
	\end{alignat}
\eseq
and the \emph{$\eps$-pseudospectral abscissa} $\aleps(\cdot)$ is defined by
\bseq
\begin{align}
	\aleps(A) 	&= \max \{ \Re z  : z \in \Lambda(A+\Delta), \|\Delta\| \leq \eps \}, \\
				&= \max \{ \Re z  : z \in \C, \| (zI - A)^{-1} \| \geq \eps^{-1} \}.
\end{align}
\eseq
If $\eps =0$, $\aleps(A) = \alpha(A)$, the \emph{spectral abscissa} of $A$, and so $\Kcon = \infty$ if $\alpha(A) > 0$.  
Similar to the discrete-time case, $\Kcon \geq 1$ always holds
and $\Kcon = 1$  if $A$ is normal and $\alpha(A) \leq 0$.

In \cite{Mit19}, we introduced the first globally convergent algorithms to compute 
continuous- and discrete-time Kreiss constants to arbitrary accuracy.
Prior to this, it was only possible to estimate $\Kcon$ using supervised techniques,
i.e., where a user is an active participant of the process.
In \cite[Chapter~3.4.1]{Men06} and \cite{EmbK17}, 
Kreiss constants were approximated by 
plotting \eqref{eq:k1d_disc} or \eqref{eq:k1d_cont} 
and simply taking the maximum of the resulting curve.
Meanwhile, Kreiss constant estimation via
plotting $\| \e^{tA}\|$ with respect to $t$ or $\|A^k\|$ with respect to $k$
or by finding local maximizers of \eqref{eq:k1d_disc} or \eqref{eq:k1d_cont}
via optimization is discussed in \cite[Chapters~14 and 15]{TreE05}.
Plotting and/or grid techniques of course have low fidelity.  
They are unlikely to obtain the value of $\Kcon$
to more than a few digits at best
and may require a large number of function evaluations to have any accuracy whatsoever.
In contrast, under sufficient regularity conditions,
optimization techniques 
have high fidelity in finding local maximizers, often with relatively few function evaluations.
However, as the optimization problems in \eqref{eq:kcon_disc} and \eqref{eq:kcon_cont} are typically nonconvex,
general optimization solvers cannot guarantee that a global maximizer is found, and estimates from local minimizers
can be arbitrarily bad approximations to $\Kcon$.
Even if one happens to know a relatively small bounded region containing a global maximizer of the optimization problems in 
\eqref{eq:kcon_disc} and \eqref{eq:kcon_cont},
to guarantee any level of accuracy, 
this region must still be sufficiently sampled (for plotting or grid techniques) or contain no other stationary points (for optimization).
Knowing such a region \emph{and} how much sampling is required or that it contains no other stationary points is not typical, at least not without user experimentation.
Moreover, if transient behavior occurs on a fast time scale, such regions can be very small and thus hard to find, particularly without fine-grained sampling.
As  noted by Mengi \cite[Section~6.2.2]{Men06}, ``in general it is difficult to 
guess \emph{a priori} which $\eps$ value is most relevant for the transient peak [of \eqref{eq:k1d_disc} or \eqref{eq:k1d_cont}]." 

For our recent algorithms to compute Kreiss constants with theoretical guarantees \cite{Mit19},
we actually worked with the inverses of \eqref{eq:k2d_disc} and \eqref{eq:k2d_cont}, respectively
\bseq
	\label{eq:kinv_eqs}
	\begin{alignat}{3} 
	\label{eq:kinv_disc}
	\qquad \Kinv =& \ \: \inf_{| z | > 1} 
	&& \: \smin \left( \frac{z I - A}{|z| - 1} \right) \qquad \text{(discrete-time)}, \\
	\label{eq:kinv_cont}
	\qquad \Kinv =& \ \inf_{\Re z > 0} 
	&& \: \smin \left( \frac{z I - A}{\Re z} \right) \qquad \text{(continuous-time)}.
	\end{alignat}
\eseq
In this form, it is easier to see that the optimization problems in \eqref{eq:kinv_eqs} have some similarity to
\eqref{eq:dtu}, which naturally leads to the question of whether or not 
any of the aforementioned $\dtu$ methods could be adapted to computing $\Kcon$.
Like the $\dtu$ setting, the objective functions in \eqref{eq:kinv_eqs} are semi-algebriac, so they 
have a finite number of locally minimal values; hence, properly designed optimization-with-restart
algorithms will converge to $\Kinv$ within a finite number of restarts.
Optimization can also robustly and efficiently find (feasible) minimizers of \eqref{eq:kinv_eqs} in order to obtain locally optimal approximations to $\Kinv$, even for large scale problems.
For complete details on both of these points, see our previous comments in \S\ref{sec:intro_dtu} and \cite[Section~3]{Mit19}.
However, as shown in \cite[Section~4]{Mit19}, there are in fact fundamental differences
between computing the distance to uncontrollability and Kreiss constants
and existing $\dtu$ methods do not extend directly.
Nevertheless, for the objective functions in \eqref{eq:kinv_eqs},
we developed several different Kreiss constant analogues of Gu's \cite[Theorem~3.1]{Gu00}, which 
along with several new 2D level-set tests, enable three different $\Kcon$ iterations \cite{Mit19}.

When computing continuous-time $\Kcon$, the first of these is based on a new 2D~level-set test
that, similar to Gu's $\dtu$ test for \eqref{eq:gu_test}, looks for pairs of level-set points of the objective 
function in \eqref{eq:kinv_cont} that are a \emph{fixed-distance} $\eta$ apart.  
However, the aforementioned $\dtu$ trisection and optimization-with-restart algorithms of \cite{BurLO04}
cannot be used with this new test because
a meaningful lower bound for $\Kinv$, like \eqref{eq:dtu_lb}, is \emph{not} asserted
 when no such level-set pairs are detected; see \cite[Section~4]{Mit19}.
But, by combining optimization-with-restarts with a \emph{backtracking procedure}, it 
was possible to use this fixed-distance test to devise a globally convergent iteration for $\Kcon$;
see \cite[Section~5]{Mit19}.
This new level-set involves computing all real eigenvalues of 
a generalized eigenvalue problem $\mathcal{A}_1 - \lambda\mathcal{A}_2$ of order $4n^2$ \cite[eq.~(5.5)]{Mit19}, 
where $\mathcal{A}_2$ is singular with rank $2n^2$.
As noted in \cite[Section~5.3]{Mit19}, it does not seem possible to analytically reduce 
the order of $\mathcal{A}_1 - \lambda\mathcal{A}_2$ to $2n^2$ or to a standard eigenvalue problem
via the techniques of \cite{GuMOetal06} for Gu's $\dtu$ level-set test.

Meanwhile, in \cite[Section~6]{Mit19}, we devised a second 2D level-set test for \eqref{eq:kinv_cont} 
that looks for pairs of level-set points that are a certain \emph{variable-distance} apart involving $\eta$.  This too leads to solving 
a generalized eigenvalue problem of order $4n^2$, \mbox{$\mathcal{B}_1 - \lambda\mathcal{B}_2$} \cite[eq.~(6.6)]{Mit19}, 
but here $\mathcal{B}_2$ is nonsingular and we derived an explicit form for its inverse.
Differences in numerical reliability between the fixed- and variable-distance tests 
are still not entirely clear, but one advantage of 
the variable-distance test is that it \emph{does} assert that 
the lower bound $\Kinv > \gamma - \tfrac{\eta}{2}$ must hold whenever no such level-set pairs are detected.
Thus, this variable-distance 2D level-set test enables two more $\Kcon$ iterations, which respectively use trisection and optimization-with-restarts \emph{without backtracking}.
In practice, optimization-with-restarts without backtracking generally needs the least number of 
level-set tests, while trisection needs far more than either of the optimization-based iterations.

To compute discrete-time $\Kcon$, we also developed three analogues of these iterations \cite[Section~7]{Mit19},
which, at a very high level, work similarly.
However,
the underlying new 2D level-set tests for the objective function in \eqref{eq:kinv_disc} are substantially 
different and more complicated and expensive.  In particular, they require solving \emph{quadratic} eigenvalue problems of order $4n^2$.

Like the $\dtu$ methods, these $\Kcon$ methods do $\bigO(n^6)$ work when using dense eigensolvers.
However, solving the larger and generalized/quadratic eigenvalue problems
is substantially more expensive and requires much more memory,
particularly for discrete-time $\Kcon$.
In \cite[Sections~5.4, 6.3, and 7.4]{Mit19},
we also developed divide-and-conquer versions of all of these $\Kcon$ algorithms.
While these variants do $\bigO(n^4)$ work on average and $\bigO(n^5)$ in the worst case and have dramatically
lower memory requirements, we noted in \cite[Section~8]{Mit19}
that divide-and-conquer for the $\Kcon$ setting does not appear to be very reliable in practice.
All our 2D level-set tests for $\Kcon$ also use our improved 
technique that is explained in \cite[Key~Remark~6.3]{Mit19}.

\subsection{Motivation and contribution of the paper}
As we have just seen, the state-of-the-art methods for computing $\dtu$ and $\Kcon$
are based on a 2D level-set test methodology that intrinsically involves solving very large eigenvalue problems.
Even in their faster divide-and-conquer variants, these algorithms
are prohibitively expensive.
Moreover, 
the convergence guarantees of these methods assume exact computation,
but rounding errors in computed eigenvalues may cause the methods to fail numerically.
Our aforementioned modified technique to perform these 2D level-set tests more reliably, 
while effective and in fact crucial for robust codes, 
does not address all the numerical pitfalls of these methods.

In this paper, we address both these high-cost and reliability issues
by proposing a new methodology for computing quantities
whose values are given by global optimization of singular value functions in two real variables.
We do this by developing new level-set tests for optimization-with-restarts-based methods, 
which we call \emph{interpolation-based globality certificates} and 
that work by sufficiently resolving certain one-variable continuous functions 
over a \emph{finite interval known a priori}.
These new functions are reasonably well behaved and relatively cheap and robust to evaluate, 
all of which makes high fidelity approximation via interpolation practical.
Our new $\dtu$ and $\Kcon$ methods have $\bigO(kn^3)$ work complexity and require $\bigO(n^2)$ memory,
where $k$, the total number of function evaluations incurred to build the interpolants,
is such that our new methods are orders of magnitude faster than the previous
state-of-the-art.  
Moreover, additional significant speedups can be attained via parallel processing, 
since function evaluations for interpolation are ``embarrassingly parallel."
Our ``strength in numbers" interpolation-based approach also has numerical benefits,
as global convergence does not crucially hinge upon any \emph{single} computation 
being susceptible to rounding error, which is not true for almost all of the 2D level-set test methods discussed in \S\ref{sec:intro_dtu} and  \S\ref{sec:intro_kreiss} (the sole exception is the Kreiss constant iteration
using backtracking).
The trade-off we have made here is that instead of putting our faith 
in accurately computing eigenvalues of very large eigenvalue problems,
we assume that approximation via interpolation is reliable enough to be used as a subroutine.
In this sense, our new approach can be considered complementary to our earlier efforts of \cite{Mit19},
as they are built on very different foundations.

In \S\ref{sec:kcont} we present our new interpolation-based globality certificates 
for the case of computing continuous-time $\Kcon$.
Analogues of our interpolation-based certificates for discrete-time $\Kcon$ and $\dtu$
are respectively derived in \S\ref{sec:kdisc} and \S\ref{sec:dtu}.
Numerical experiments are given in \S\ref{sec:experiments}, 
while concluding remarks are made in \S\ref{sec:conclusion}.

\section{A new approach for computing continuous-time Kreiss constants}
\label{sec:kcont}
We now propose a new optimization-with-restarts-based algorithm for computing continuous-time $\Kcon$,
i.e., a new method to find a global minimizer of \eqref{eq:kinv_cont}.
As previously mentioned, local minimizers of \eqref{eq:kinv_cont} can be found 
relatively cheaply, and its objective function has a finite number of locally minimal values.
Thus, given a corresponding level-set test,
only a finite number of optimization restarts are necessary to compute $\Kinv$, and equivalently $\Kcon$,
to arbitrary accuracy.
However, unlike the earlier methods discussed in the introduction, 
we abandon the concept of looking for \emph{pairs of points on the $\gamma$-level set} of
the given singular value function for which a global minimizer is sought.
Instead, we focus on devising a new type of level-set test, which given some $\gamma \geq \Kinv$
corresponding to a minimizer of \eqref{eq:kinv_cont}, 
answers the question: are there other points on this same level set, and if so, where are they?  

For \eqref{eq:kinv_cont}, minimizers should be computed using Cartesian coordinates 
(see \cite[Section~3.1]{Mit19} for full details), but note that 
all of our \emph{interpolation-based globality certificates} to detect level-set points
are based on polar coordinates, even in continuous-time settings.
Thus, consider \eqref{eq:kinv_cont} parameterized in polar coordinates:
\beq
	\label{eq:g_fns}
	g(r,\theta) \coloneqq \smin(G(r,\theta)) 
	\quad \text{and} \quad 
	G(r,\theta) \coloneqq \frac{r\eit I - A}{r\costh},
\eeq
so 
\[ 
	\Kinv = \inf_{r > 0, \, \theta \in (-\tfrac{\pi}{2},\tfrac{\pi}{2})} \, g(r,\theta).
\]
As computing $\Kcon$ is trivial if either $A$ is normal or $\alpha(A) > 0$, 
we assume neither holds.
For reasons that will become clear momentarily,
we also assume that $0 \not\in \Lambda(A)$.

\subsection{Level sets of $g(r,\theta)$ and a 1D radial level-set test}
\label{sec:level_sets_kc}
For a fixed $\theta \in \R$, 
the following key result relates singular values of $G(r,\theta)$ with eigenvalues
of a certain $2\n \times 2\n$ matrix pencil.
Exploiting such relationships of singular values and eigenvalues has a rich history 
in computing various robust stability measures, starting when 
Byers introduced the first algorithm to compute the distance to instability in 1988 \cite{Bye88}.

\begin{theorem}
\label{thm:MN_kc}
Let $A \in \C^{n\times n}$ and $\gamma, r,\theta \in \R$ with $r \neq 0$.
Then $\gamma \geq 0$ is a singular value of $G(r,\theta)$ defined in \eqref{eq:g_fns} 
if and only if $\imagunit r$ is an eigenvalue of the skew-Hamiltonian-Hamiltonian 
matrix pencil $(M,N_\theta)$, where
\beq
	\label{eq:eigMN_kc}
	M \coloneqq
	\begin{bmatrix}
		A  	& 0 \\
		0 	& -A^*
	\end{bmatrix}
	\quad \text{and} \quad
	N_\theta \coloneqq
	\begin{bmatrix}
		-\imagunit \eit I 				& \imagunit \gamma \costh I  \\
	 	- \imagunit \gamma \costh I    	& \imagunit \emit I
	\end{bmatrix},
\eeq
$N_\theta$ is singular if and only if $| \gamma \costh | = 1$,
and $(M,N_\theta)$ is regular if $|\gamma \costh| \neq 1$.
\end{theorem}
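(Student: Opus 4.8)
The plan is to exploit the classical correspondence between singular values of a matrix and eigenvalue pairs, specializing it so that the resulting eigenproblem takes the structured generalized form in \eqref{eq:eigMN_kc}. Recall that $\gamma \geq 0$ is a singular value of the $\n\times\n$ matrix $G(r,\theta)$ if and only if there exist nonzero $v,u \in \C^\n$ with $G(r,\theta) v = \gamma u$ and $G(r,\theta)^* u = \gamma v$; for $\gamma > 0$ these are unit singular vectors, while for $\gamma = 0$ one takes $v \in \ker G(r,\theta)$ and $u \in \ker G(r,\theta)^*$, both nonzero since $G(r,\theta)$ is then singular. Since $G(r,\theta)$ is only defined when $\costh \neq 0$, and $r\costh$ is real, I would clear this common denominator in both relations to obtain $(r\eit I - A)v = r\gamma\costh\, u$ and $(r\emit I - A^*)u = r\gamma\costh\, v$. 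Moving $A$ and $A^*$ to the left-hand sides and stacking $w \coloneqq \begin{bsmallmatrix} v \\ u \end{bsmallmatrix}$, a direct check --- which is precisely where the factors $\pm\imagunit$ in $N_\theta$ are engineered to come from --- shows that this pair of identities is exactly $Mw = (\imagunit r)N_\theta w$.

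Given this algebraic identity, both directions of the biconditional are short. For ``only if,'' singular vectors $v,u$ as above produce a nonzero $w$ with $Mw = (\imagunit r) N_\theta w$, so $\imagunit r$ is an eigenvalue of $(M, N_\theta)$. For ``if,'' suppose $w = \begin{bsmallmatrix} v \\ u \end{bsmallmatrix} \neq 0$ satisfies $Mw = (\imagunit r)N_\theta w$, i.e., the two stacked identities. When $\gamma > 0$ (so $\gamma\costh \neq 0$), I would first rule out a half-vanishing $w$: if $v = 0$, the first identity gives $r\gamma\costh\, u = 0$, forcing $u = 0$ since $r\neq 0$, a contradiction, and symmetrically for $u = 0$; then dividing both identities by $r\costh$ recovers $G(r,\theta) v = \gamma u$ and $G(r,\theta)^* u = \gamma v$, whence $G(r,\theta)^* G(r,\theta) v = \gamma^2 v$ with $v \neq 0$, so $\gamma$ is a singular value of $G(r,\theta)$. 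The case $\gamma = 0$ I would treat separately: then $N_\theta$ is block diagonal and nonsingular, the eigenproblem decouples into $Av = r\eit v$ and $A^* u = r\emit u$, and either nonzero block forces $r\eit \in \Lambda(A)$, equivalently $G(r,\theta)$ singular, equivalently $0$ being a singular value of $G(r,\theta)$.

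For the remaining assertions: since every block of $N_\theta$ is a scalar multiple of $I$, that matrix is permutation-similar to the block-diagonal matrix with $\n$ copies of the $2\times 2$ matrix obtained by replacing each block by its scalar, so $\det N_\theta$ equals the $\n$-th power of the determinant of that $2\times 2$ matrix, which works out to $1 - (\gamma\costh)^2$; hence $\det N_\theta = (1 - (\gamma\costh)^2)^\n$ and $N_\theta$ is singular exactly when $|\gamma\costh| = 1$. When $|\gamma\costh| \neq 1$, $N_\theta$ is invertible, so $\det(M - \lambda N_\theta)$ is a polynomial of degree $2\n$ in $\lambda$ with nonzero leading coefficient, hence not identically zero, i.e., $(M,N_\theta)$ is regular. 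Finally, the sHH structure reduces to two one-line block computations: $JM = \begin{bsmallmatrix} 0 & -A^* \\ -A & 0 \end{bsmallmatrix}$ is Hermitian, so $M$ is Hamiltonian, and $N_\theta^* J = J N_\theta$ (both sides equal $\begin{bsmallmatrix} -\imagunit\gamma\costh I & \imagunit\emit I \\ \imagunit\eit I & -\imagunit\gamma\costh I \end{bsmallmatrix}$), so $N_\theta$ is skew-Hamiltonian.

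Everything here is routine linear algebra; the one place that needs genuine care --- the main obstacle --- is the case analysis in the ``if'' direction, namely excluding an eigenvector $w$ of $(M,N_\theta)$ with exactly one vanishing $\C^\n$-block and separately handling $\gamma = 0$ (and, so that the biconditional remains meaningful for a possibly singular pencil, checking that it still reads correctly when $|\gamma\costh| = 1$).
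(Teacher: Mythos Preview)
Your proposal is correct and follows essentially the same route as the paper: translate the singular-value relations $G(r,\theta)v=\gamma u$, $G(r,\theta)^*u=\gamma v$ into the stacked identity $Mw=(\imagunit r)N_\theta w$, and read off the determinant of $N_\theta$ from its scalar-block structure. If anything you are more careful than the paper, which only writes out the ``only if'' direction and leaves reversibility implicit; your explicit treatment of the ``if'' direction (ruling out half-vanishing $w$ when $\gamma>0$, and handling $\gamma=0$ separately) and your determinant $(1-(\gamma\costh)^2)^{\n}$ (the paper drops the exponent, harmlessly) are welcome refinements rather than a different approach.
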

\begin{proof}
It is easy to verify that $M$ and $N_\theta$ are respectively Hamiltonian and skew-Hamiltonian,
and so $(M,N_\theta)$ is an sHH matrix pencil.
As $N_\theta$ is composed of four blocks of different multiples of the $\n \times \n$ identity matrix, 
$\det(N_\theta) = 1 - (\gamma \costh)^2$.
Thus, $| \gamma \costh| \neq 1$, i.e., $N_\theta$ being nonsingular, is clearly a
sufficient condition for $(M,N_\theta)$ to be a regular matrix pencil.
Now suppose $\gamma$ is a singular value of $G(r,\theta)$ with left and right singular vectors $u$ and $v$.
Then the following two equations hold:
\[
	\gamma
	\begin{bmatrix}
		u \\ v
	\end{bmatrix}
	=
	\begin{bmatrix}
		G(r,\theta)  	& 0 \\
		0 			& G(r,\theta)^*
	\end{bmatrix}
	\begin{bmatrix}
		v \\ u
	\end{bmatrix}
	\ \ \text{and} \ \
	\gamma r \costh
	\begin{bmatrix}
		u \\ v
	\end{bmatrix}
	=
	\begin{bmatrix}
		r\eit I - A  	& 0 \\
		0 		& r\emit I - A^*
	\end{bmatrix}
	\begin{bmatrix}
		v \\ u
	\end{bmatrix}.
\]
Multiplying the bottom block by -1 and rearranging terms, this is equivalent to
\[
	\begin{bmatrix}
		A  	& 0 \\
		0 	& -A^*
	\end{bmatrix}
	\begin{bmatrix}
		v \\ u
	\end{bmatrix}
	=
	r
	\begin{bmatrix}
		\eit I  			& -\gamma \costh I \\
		\gamma \costh I 	& -\emit I
	\end{bmatrix}
	\begin{bmatrix}
		v \\ u
	\end{bmatrix}.
\]
Noting that the matrix on the right multiplied by $-\imagunit$ is $N_\theta$ 
completes the proof.
\end{proof}

\begin{remark}
\label{rem:min_sv}
By \cref{thm:MN_kc}, if a point $(\tilde r,\tilde \theta)$ is in the $\gamma$-level set of $g(r,\theta)$ for some $\gamma \geq 0$,
then $\imagunit \tilde r \in \Lambda(M,N_{\tilde \theta})$.
Note that the converse is not necessarily true.
If $\imagunit \tilde r$ is an eigenvalue of the matrix pencil $(M,N_{\tilde \theta})$, \cref{thm:MN_kc} only states that 
$\gamma$ is a singular value of $G(\tilde r,\tilde \theta)$.  
For point $(\tilde r,\tilde \theta)$ to be in the $\gamma$-level set, $\gamma$ would additionally have to be the 
smallest singular value of $G(\tilde r,\tilde \theta)$.
However, if $\gamma$ is not the minimum singular value of $G(r,\theta)$, 
then $(\tilde r,\tilde \theta)$ is instead in some $\hat \gamma$-level set of $g(r,\theta)$ with $\hat \gamma < \gamma$.
\end{remark}

Besides being computationally useful for detecting level-set points, 
\cref{thm:MN_kc} provides a way to show that the 
$\gamma$-level set of $g(r,\theta)$ is bounded for $\gamma \in [0,1)$.

\begin{theorem}
\label{thm:bounded_cont}
Let $A \in \C^{n\times n}$ and $\gamma \in [0,1)$.
The $\gamma$-level set of $g(r,\theta)$ defined in \eqref{eq:g_fns} is bounded.
Moreover, if $\alpha(A) < 0$, the $\gamma$-level set is compact.
\end{theorem}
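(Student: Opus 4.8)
The plan is to work inside the open set $D \coloneqq \{(r,\theta) : r > 0,\ \theta \in (-\tfrac{\pi}{2},\tfrac{\pi}{2})\}$, over which $\Kinv$ is the infimum of $g$ and on which $g$ is continuous (since $G$ is continuous on $D$ and $\smin$ is $1$-Lipschitz in its matrix argument), and to control the $\gamma$-level set $L$ of $g$ inside $D$. Since $\theta$ already ranges over a bounded interval, boundedness of $L$ reduces to bounding $r$ on $L$, and this is exactly where \cref{thm:MN_kc} enters. By \cref{rem:min_sv}, every $(r,\theta) \in L$ yields $\imagunit r \in \Lambda(M,N_\theta)$. Because $\gamma \in [0,1)$ we have $|\gamma\costh| \le \gamma < 1$, so $\det N_\theta = 1 - (\gamma\costh)^2 \ge 1 - \gamma^2 > 0$; hence $N_\theta$ is nonsingular (and $(M,N_\theta)$ regular) for every $\theta$, and from the explicit block form of $N_\theta^{-1}$ one obtains a bound $\|N_\theta^{-1}\| \le c_\gamma$ with $c_\gamma$ depending only on $\gamma$ (for instance $c_\gamma = 2/(1-\gamma^2)$). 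Every $\lambda \in \Lambda(M,N_\theta)$ is an eigenvalue of $N_\theta^{-1}M$, so $|\lambda| \le \|N_\theta^{-1}M\| \le \|N_\theta^{-1}\|\,\|M\| = \|N_\theta^{-1}\|\,\|A\| \le c_\gamma\|A\|$, uniformly in $\theta$; thus $r$ is bounded on $L$, proving the first assertion. (A direct alternative avoiding \cref{thm:MN_kc}: $(r,\theta) \in L$ means $\gamma$ is a singular value of $G(r,\theta)$, so some unit vector $v$ satisfies $\|(r\eit I - A)v\| = \gamma r\costh$, whence $r = \|r\eit v\| \le \|Av\| + \gamma r\costh \le \|A\| + \gamma r$ and therefore $r \le \|A\|/(1-\gamma)$.)

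Now assume $\alpha(A) < 0$; it remains to show $L$ is closed in $\R^2$, for which it suffices to exhibit a compact $C \subseteq D$ with $L \subseteq C$ — then $L$, being relatively closed in $D$ and contained in the compact set $C \subseteq D$, is closed and bounded, hence compact. There are two ways $L$ could a priori approach $\partial D$, namely toward $\{r = 0\}$ and toward $\{\theta = \pm\tfrac{\pi}{2}\}$, and I rule out each. Toward $r = 0$: since $\alpha(A) < 0$ we have $0 \notin \Lambda(A)$, so $\smin(A) > 0$; using $g(r,\theta) = \smin(r\eit I - A)/(r\costh)$, the estimate $\smin(r\eit I - A) \ge \smin(A) - r$, and $\costh \le 1$, we get $g(r,\theta) \ge (\smin(A)-r)/r > \gamma$ whenever $0 < r < \smin(A)/(1+\gamma)$, so $L$ stays in $\{r \ge r_{\min}\}$ with $r_{\min} \coloneqq \smin(A)/(1+\gamma) > 0$; together with the first part, $L \subseteq \{r_{\min} \le r \le r_{\max}\}$ for a suitable $r_{\max}$. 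Toward $\theta = \pm\tfrac{\pi}{2}$: consider the compact sets $S_\delta \coloneqq \{r\eit : r \in [r_{\min},r_{\max}],\ \tfrac{\pi}{2}-\delta \le |\theta| \le \tfrac{\pi}{2}\}$; for $\delta = 0$ these are two segments of the imaginary axis, which is disjoint from $\Lambda(A)$ because $\alpha(A) < 0$, so by compactness $S_\delta \cap \Lambda(A) = \emptyset$ for all sufficiently small $\delta > 0$, and then $z \mapsto \smin(zI - A)$ attains a positive minimum $c = c(\delta) > 0$ on $S_\delta$. For such $\theta$ we have $g(r,\theta) \ge c/(r_{\max}\costh) \to +\infty$ as $|\theta| \to \tfrac{\pi}{2}$, uniformly in $r \in [r_{\min},r_{\max}]$, so there is $\delta' \in (0,\delta]$ with $g(r,\theta) > \gamma$ whenever $\tfrac{\pi}{2} - \delta' < |\theta| < \tfrac{\pi}{2}$. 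Hence $L \subseteq C$, where $C \coloneqq [r_{\min},r_{\max}] \times [-\tfrac{\pi}{2}+\delta',\tfrac{\pi}{2}-\delta']$ is compact and contained in $D$, and compactness of $L$ follows.

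The main obstacle is the last step — controlling $L$ as $\theta \to \pm\tfrac{\pi}{2}$, i.e.\ as the denominator $r\costh$ of $g$ collapses toward the imaginary axis. The argument hinges on $\alpha(A) < 0$ forcing $\smin(zI - A)$ to stay bounded below on a thin compact sector around the relevant arc of the imaginary axis; if $A$ instead had an eigenvalue $\imagunit\omega$ with $|\omega|$ in the radial range of $L$, then $\smin(r\eit I - A) \to 0$ as $(r,\theta) \to (|\omega|,\sgn(\omega)\tfrac{\pi}{2})$ and this control would fail, which is precisely why only boundedness — not compactness — is claimed in general. I expect verifying the uniform (in $r$) lower bound on $\smin(zI - A)$ over $S_\delta$, and the resulting uniform blow-up of $g$, to be the part requiring the most care; the behavior as $r \to 0^+$ is comparatively routine given $0 \notin \Lambda(A)$.
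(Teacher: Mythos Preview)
Your proof is correct and follows the same core strategy as the paper: boundedness via \cref{thm:MN_kc} together with the nonsingularity of $N_\theta$ for $\gamma \in [0,1)$, and compactness via $g \to +\infty$ as one approaches the imaginary axis when $\alpha(A) < 0$. The paper bounds $r$ by invoking continuity of $\theta \mapsto \rho(M,N_\theta)$ over the compact interval $[-\tfrac{\pi}{2},\tfrac{\pi}{2}]$ rather than your explicit norm estimate $|r| \le \|N_\theta^{-1}\|\,\|M\|$, and it dispatches compactness in a single sentence (``$g(r,\theta)$ is infinite on all of the imaginary axis \dots\ and closed'') where you carefully separate the $r \to 0^+$ and $|\theta| \to \tfrac{\pi}{2}$ regimes. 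Your parenthetical elementary alternative $r \le \|A\|/(1-\gamma)$, which bypasses \cref{thm:MN_kc} altogether, is a genuinely simpler route to boundedness that the paper does not mention.
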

\begin{proof}
For any point $\tilde r \eit$ in the $\gamma$-level set of $g(r,\theta)$, $\gamma$ is a singular value of $G(\tilde r,\theta)$.
Thus by \cref{thm:MN_kc}, $\imagunit \tilde r \in \Lambda(M,N_\theta)$.
Furthermore, all eigenvalues of $(M,N_\theta)$ must be finite, as $|\gamma| < 1$ implies that $N_\theta$ is always nonsingular.
Consider the function $m(\theta) \coloneqq \rho(M,N_\theta)$.
By continuity of the spectral radius,
$m(\theta)$ must have a finite maximal value $m_\star$
on $[-\tfrac{\pi}{2},\tfrac{\pi}{2}]$. Since $|\imagunit \tilde r | \leq m_\star$ must hold,
the $\gamma$-level set of $g(r,\theta)$ is bounded.
If $\alpha(A) < 0$, then $g(r,\theta)$ is infinite on all of the imaginary axis, and so its $\gamma$-level set 
cannot contain purely imaginary values.  Thus,
the $\gamma$-level set must additionally be in the open right half-plane and closed, hence compact.
\end{proof}

Our new method will require that zero is not an eigenvalue of $(M,N_\theta)$.  
The following theorem gives the precise conditions to meet this requirement, namely that
zero cannot be an eigenvalue of $A$.  

\begin{theorem}
\label{thm:zero_kc}
Let $A \in \C^{n\times n}$ and $\gamma,\theta \in \R$.
Then the matrix pencil $(M,N_\theta)$ defined by \eqref{eq:eigMN_kc} has zero as an eigenvalue 
if and only if the matrix $A$ also has zero as an eigenvalue.
Consequently, $0 \not\in \Lambda(A)$ also ensures that $(M,N_\theta)$ is regular.
\end{theorem}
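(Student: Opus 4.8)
The plan is to observe that $N_\theta$ is essentially irrelevant to this question and to reduce everything to the structure of $M$. Recall that $\lambda_0 = 0$ is an eigenvalue of the matrix pencil $(M,N_\theta)$ precisely when there is a nonzero vector $v$ with $Mv = 0\cdot N_\theta v = 0$, i.e., precisely when $M$ is singular. I would emphasize that this characterization does not presuppose regularity of the pencil: a singular $M$ exhibits a genuine zero eigenvector regardless of whether $\det(M - \lambda N_\theta)$ is identically zero. Hence the asserted equivalence amounts to the statement that $M$ is singular if and only if $A$ is singular.

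For that step I would use the block-diagonal form $M = \begin{bsmallmatrix} A & 0 \\ 0 & -A^* \end{bsmallmatrix}$ from \eqref{eq:eigMN_kc}. A vector $\begin{bsmallmatrix} x \\ y \end{bsmallmatrix}$ lies in $\ker M$ exactly when $Ax = 0$ and $A^*y = 0$, so $M$ has a nontrivial kernel if and only if $\ker A$ or $\ker A^*$ is nontrivial; since $A$ and $A^*$ have the same rank, these happen simultaneously, and both are equivalent to $0 \in \Lambda(A)$. (Equivalently, one can simply compute $\det M = \det A \cdot \det(-A^*) = (-1)^n |\det A|^2$ and read off that $M$ is singular iff $\det A = 0$.) This establishes the ``if and only if'' part.

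For the final sentence I would note that $0 \not\in \Lambda(A)$ makes $M$ nonsingular, and a pencil whose leading matrix is invertible is automatically regular, because $\det(M - \lambda N_\theta) = \det(M)\det(I - \lambda M^{-1}N_\theta)$ is a polynomial in $\lambda$ that does not vanish at $\lambda = 0$ and is therefore not identically zero. This complements \cref{thm:MN_kc}, which only guaranteed regularity of $(M,N_\theta)$ when $|\gamma\costh| \neq 1$: the singular case $|\gamma\costh| = 1$ is now covered as well, at the mild cost of assuming $0 \not\in \Lambda(A)$.

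I do not expect a real obstacle in this proof; the only point requiring care is stating the convention for ``zero being an eigenvalue of a pencil'' so that the argument stays valid even if $(M,N_\theta)$ happens to be a singular pencil, and keeping track that $\ker A$ and $\ker A^*$ are simultaneously trivial or not.
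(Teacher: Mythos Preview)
Your proof is correct and follows essentially the same approach as the paper, which simply notes that $\det(M) = \det(A)\det(-A^*)$ and concludes immediately. Your version is more detailed and carefully addresses the pencil-regularity convention, but the underlying idea is identical.
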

\begin{proof}
The proof is immediate as $\det(M) = \det(A)\det(-A^*)$, 
so clearly $(M,N_\theta)$ is a regular matrix pencil if $0 \not \in \Lambda(A)$.
\end{proof}

Given $\gamma \geq \Kinv$ and some $\theta \in (-\tfrac{\pi}{2},\tfrac{\pi}{2})$, 
\cref{thm:MN_kc} provides a way to compute all the $\gamma$-level set points of $g(r,\theta)$
along the ray emanating from the origin specified by angle $\tilde \theta$,
namely via computing all the imaginary eigenvalues of $(M,N_{\tilde \theta})$.
As $(M,N_\theta)$ is an sHH pencil, its eigenvalues are symmetric with respect to the imaginary axis,
and structure-preserving eigenvalues solvers such as \cite{BenBMetal02} can be used to ensure
that computed imaginary eigenvalues are exactly on the imaginary axis in the presence of rounding errors.
While this is desirable for numerical robustness, solving generalized eigenvalue problems
is many times more expensive than solving a standard eigenvalue problem of the same dimension
and, as we explain later, our new methods often do not need this level of robustness.
Thus, since $N_\theta$ is generically nonsingular, 
we now investigate the condition number of $N_\theta$ to ascertain the feasibility
of instead computing the eigenvalues of $N_\theta^{-1}M$ via the QR algorithm.
We first need the following generic result.

\begin{lemma}
\label{lem:condnum}
Let the matrix $E = \begin{bsmallmatrix} aI & bI \\ \overline{b}I & \overline{a}I \end{bsmallmatrix}$
with $a,b \in \C$ such that $\det(E) \neq 0$.
Then the condition number of $E$ is
\[
	\kappa(E) = \tfrac{|a| + |b|}{| |a| - |b| |}.
\]
\end{lemma}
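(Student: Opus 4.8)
The plan is to reduce the claim to an explicit computation with the $2\times 2$ matrix hiding inside $E$. Since $\kappa(E) = \|E\|\,\|E^{-1}\| = \|E\|\,\smin(E)^{-1}$, it suffices to identify the largest and smallest singular values of $E$. The structural observation that drives everything is that $E$ is a block $2\times 2$ matrix each of whose blocks is a scalar multiple of $I_n$; equivalently, $E = \tilde E \otimes I_n$ with $\tilde E = \begin{bsmallmatrix} a & b \\ \overline{b} & \overline{a} \end{bsmallmatrix}$. Hence the singular values of $E$ are exactly those of $\tilde E$, each with multiplicity $n$, and in particular $\kappa(E) = \kappa(\tilde E)$.

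Next I would simply form $E^*E$. Because all the blocks are scalar multiples of $I_n$ and therefore commute, a one-line computation gives
\[
	E^* E = \begin{bmatrix} (|a|^2 + |b|^2) I & 2\overline{a} b\, I \\ 2 a \overline{b}\, I & (|a|^2 + |b|^2) I \end{bmatrix}
	= (|a|^2 + |b|^2) I + \begin{bmatrix} 0 & 2\overline{a} b\, I \\ 2 a \overline{b}\, I & 0 \end{bmatrix}.
\]
The trailing matrix is Hermitian, has zero trace, and squares to $4|a|^2|b|^2 I$, so its eigenvalues are $\pm 2|a||b|$ (each with multiplicity $n$); hence the eigenvalues of $E^*E$ are $|a|^2 + |b|^2 \pm 2|a||b| = (|a| \pm |b|)^2$. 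Therefore the two distinct singular values of $E$ are $|a| + |b|$ and $|\,|a| - |b|\,|$.

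Finally, the hypothesis $\det(E) \neq 0$ is exactly what forbids $|a| = |b|$, since $\det(E) = (|a|^2 - |b|^2)^n$; thus both singular values are strictly positive, $\|E\| = |a| + |b|$ and $\smin(E) = |\,|a| - |b|\,|$, which yields $\kappa(E) = \tfrac{|a| + |b|}{|\,|a| - |b|\,|}$. I do not expect a real obstacle here — the lemma is essentially a bookkeeping exercise. The only place to be careful is the off-diagonal block of $E^*E$: it is $2\overline{a}b\,I$ rather than $2\,\Re(\overline{a}b)\,I$, and it is precisely the identity $|2\overline{a}b| = 2|a||b|$ that makes the two eigenvalues collapse into the perfect squares $(|a| \pm |b|)^2$ and hence the singular values into $|a| \pm |b|$.
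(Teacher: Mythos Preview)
Your proof is correct and follows essentially the same route as the paper: compute the eigenvalues of $E^*E$ (the paper uses $EE^*$, but the result is the same) to obtain the singular values $|a|+|b|$ and $\bigl||a|-|b|\bigr|$, and hence the condition number. The only cosmetic differences are that you note the Kronecker structure $E=\tilde E\otimes I_n$ and extract the eigenvalues via the trace-zero/squares-to-scalar trick, whereas the paper simply writes down the quadratic characteristic polynomial; both arrive at $|a|^2+|b|^2\pm 2|a||b|=(|a|\pm|b|)^2$ in one step.
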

\begin{proof}
Since $E$ is nonsingular, all its singular values are positive and they are equal to the 
square roots of the eigenvalues of $EE^*$.  As
\[
	EE^* = 
	\begin{bsmallmatrix} 
		(a\overline{a} + b\overline{b}) I  & 2ab I \\
		2 \overline{ab} I 			& (a\overline{a} + b\overline{b}) I
	\end{bsmallmatrix}
	\eqqcolon
	\begin{bsmallmatrix} 
		c I  			& d I \\
		\overline{d} I 	& c I
	\end{bsmallmatrix},
\]
$0= \det(EE^* - \lambda I) = \lambda^2 - 2c\lambda + (c^2 - |d|^2)$,
so the eigenvalues of $EE^*$ are $\lambda = c \pm |d| = |a|^2 + |b|^2 \pm 2|ab|$.
Thus, the singular values of $E$ are $| |a| \pm |b| |$.
\end{proof}

\begin{theorem}
\label{thm:simpleMN_kc}
Let $A \in \C^{n \times n}$ and $\gamma,\theta \in \R$ with $| \gamma \costh | \neq 1$. 
Then the spectrum of the matrix pencil $(M,N_\theta)$ defined by \eqref{eq:eigMN_kc} is 
equal to the spectrum of 
\beq
	\label{eq:eigM_kc}
	M_\theta \coloneqq N_\theta^{-1} M = 
	\frac{\imagunit}{1 - (\gamma \costh)^2}
	\begin{bmatrix}
	\emit A 			& (\gamma \costh) A^*  \\
	(\gamma \costh) A  	& \eit A^*
	\end{bmatrix},
\eeq
and if $\gamma \in [0,1)$, then $\max_{\theta \in \R} \kappa(N_\theta) = \tfrac{1+\gamma}{1 - \gamma}$.
\end{theorem}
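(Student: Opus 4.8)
The plan is to establish the two claims of \cref{thm:simpleMN_kc} separately, and both follow quickly from results already in hand. For the spectral identity, observe that when $|\gamma\costh| \neq 1$ the matrix $N_\theta$ is nonsingular by \cref{thm:MN_kc}, so $(M,N_\theta)$ has no eigenvalue at infinity, and the finite spectrum of $(M,N_\theta)$ coincides with that of the standard pencil $N_\theta^{-1}M$. Thus the first task is purely computational: invert $N_\theta$ and multiply by $M$. Since $N_\theta = \imagunit\begin{bsmallmatrix} -\eit I & \gamma\costh I \\ -\gamma\costh I & \emit I\end{bsmallmatrix}$ is a $2\times 2$ block matrix whose blocks are scalar multiples of $I$, its inverse is obtained from the $2\times 2$ scalar inverse formula, with determinant of the inner matrix equal to $-(\emit\eit) + (\gamma\costh)^2 = -(1-(\gamma\costh)^2)$. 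Carrying this through and multiplying by $M = \begin{bsmallmatrix} A & 0 \\ 0 & -A^*\end{bsmallmatrix}$ yields exactly the claimed formula \eqref{eq:eigM_kc}; I would present this as a short direct verification rather than grinding every entry.

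For the conditioning claim, the idea is to apply \cref{lem:condnum} to $N_\theta$ directly. Writing $N_\theta = \imagunit E$ with $E = \begin{bsmallmatrix} -\eit I & \gamma\costh I \\ -\gamma\costh I & \emit I\end{bsmallmatrix}$, I would note that scaling by the unit-modulus factor $\imagunit$ leaves all singular values — and hence the condition number — unchanged, so $\kappa(N_\theta) = \kappa(E)$. The matrix $E$ is not quite in the form $\begin{bsmallmatrix} aI & bI \\ \overline b I & \overline a I\end{bsmallmatrix}$ required by \cref{lem:condnum}, but a trivial adjustment fixes this: negating the first block row (a unitary operation) gives $\begin{bsmallmatrix} \eit I & -\gamma\costh I \\ -\gamma\costh I & \emit I\end{bsmallmatrix}$, which has the required structure with $a = \eit$ (so $\overline a = \emit$) and $b = -\gamma\costh$ (real, so $\overline b = b$). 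Alternatively, one can simply recompute $EE^*$ as in the proof of \cref{lem:condnum} and check the block structure is of the same type; either way the hypothesis is met since $|a| = 1 \neq |\gamma\costh| = |b|$ under the standing assumption. \cref{lem:condnum} then gives
\[
	\kappa(N_\theta) = \frac{|a| + |b|}{\big| |a| - |b| \big|} = \frac{1 + |\gamma\costh|}{\big| 1 - |\gamma\costh| \big|}.
\]

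It remains to maximize this over $\theta \in \R$ when $\gamma \in [0,1)$. Here $|\gamma\costh|$ ranges over $[0,\gamma]$ as $\theta$ varies, and on this range $1 - |\gamma\costh| > 0$, so $\kappa(N_\theta) = \tfrac{1 + |\gamma\costh|}{1 - |\gamma\costh|}$, which is an increasing function of $|\gamma\costh|$. Its maximum is therefore attained when $|\gamma\costh| = \gamma$, i.e. at $\theta = 0$ (and $\theta = \pi$, etc.), giving $\max_{\theta\in\R}\kappa(N_\theta) = \tfrac{1+\gamma}{1-\gamma}$, as claimed. I do not anticipate a genuine obstacle: the only points requiring a moment's care are confirming that $N_\theta^{-1}M$ really does inherit the full spectrum of the pencil (which needs finiteness of all eigenvalues, guaranteed by $N_\theta$ nonsingular), and massaging $N_\theta$ into the exact hypothesis form of \cref{lem:condnum} via unimodular/unitary transformations that preserve singular values. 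Both are routine once spelled out.
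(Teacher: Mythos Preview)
Your proposal is correct and follows essentially the same approach as the paper: compute $N_\theta^{-1}M$ explicitly and then apply \cref{lem:condnum} to obtain $\kappa(N_\theta)$ and maximize over $\theta$. One small simplification: your detour of factoring out $\imagunit$ and negating a block row is unnecessary, since $N_\theta$ itself already has the exact form required by \cref{lem:condnum} with $a = -\imagunit\eit$ and $b = \imagunit\gamma\costh$ (so $\overline a = \imagunit\emit$ and $\overline b = -\imagunit\gamma\costh$), and the paper applies the lemma directly to $N_\theta$.
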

\begin{proof}
The matrix given in \eqref{eq:eigM_kc} simply follows by using the obvious explicit form of $N_\theta^{-1}$,
which exists if and only if $| \gamma \costh | \neq 1$, and then evaluating $N_\theta^{-1}M$.
Applying \cref{lem:condnum} to $N_\theta$, we have that 
$\kappa(N_\theta) = \tfrac{1 + |\gamma \costh|}{|1 - |\gamma \costh||}$.
It is easy to see that if $\gamma \in [0,1)$, then $\theta = 0$ is a global maximizer of this ratio, thus completing the proof.
\end{proof}

For $\gamma = 0.9$, \cref{thm:simpleMN_kc} says that the condition number of $N_\theta$ is only 19, and $\kappa(N_\theta) \to 1$ monotonically as $\gamma \to 0$.      
While $\kappa(N_\theta)$ does blow up as $\gamma \to 1$, this is mostly inconsequential, 
since $\gamma=\Kinv \in [0.9,1]$ corresponds to Kreiss constants between 1 and 1.1.  
In other words, for almost all matrices of interest, encountered values of $\gamma$ should be much less 
than 0.9, and so $N_\theta$ will be very well conditioned.
Hence, there is generally no numerical concern in computing the eigenvalues of $(M,N_\theta)$ via $N_\theta^{-1}M$,
except that the imaginary axis symmetry will not be maintained exactly via the standard QR algorithm.
As we clarify later, computing the spectrum of $(M,N_\theta)$ using an sHH structure-preserving
eigensolver can always be done as a backup.

\subsection{An interpolation-based globality certificate for $g(r,\theta)$}
We are now ready to present our first interpolation-based globality certificate, specifically for \eqref{eq:kinv_cont}.  
Given $\gamma \geq 0$, 
the idea is to sweep the open right half of the complex plane with rays from the origin to determine
which ones intersect the $\gamma$-level set.
To do this, we are about to construct a rather well behaved continuous function $\sgfn: (-\tfrac{\pi}{2},\tfrac{\pi}{2}) \mapsto [0,\pi^2]$
such that $\sgfn(\tilde \theta) = 0$ holds whenever the ray from the origin determined by angle $\tilde \theta$ intersects the $\gamma$-level set of $g(r,\theta)$.
Hence, if $\sgfn(\theta)$ is strictly positive for all $\theta \in (-\tfrac{\pi}{2},\tfrac{\pi}{2})$,
then $\gamma < \Kinv$ must hold.
Otherwise, the angles $\tilde \theta$ for which $\sgfn(\tilde \theta) = 0$ provide the directions of the rays 
that intersect the $\gamma$-level, and provided these intersection points are not stationary, they 
can be used to restart optimization to find better (lower) minimizers of \eqref{eq:kinv_cont}.
By approximating $\sgfn(\theta)$ via interpolation, we can then ascertain if it has any zeros.

Keeping in mind that the spectrum of $(M,N_\theta)$ as defined by \eqref{eq:eigMN_kc} is always imaginary-axis symmetric, 
to accomplish our criteria above, consider
\bseq
	\label{eq:gamma_kc}
	\begin{align}
	\label{eq:dist_kc}
	\sgfn(\theta) &\coloneqq
	\min \{ \Arg(-\imagunit \lambda)^2 : \lambda \in \Lambda(M,N_\theta), \Re \lambda \leq 0 \}, \\					
	\label{eq:set_kc}
	\sgset(\gamma) &\coloneqq
	\interior \{ \theta : \sgfn(\theta) = 0, \, \theta \in (-\tfrac{\pi}{2},\tfrac{\pi}{2}) \},	
	\end{align}
\eseq
where $\Arg : \C \setminus \{0\} \mapsto (-\pi,\pi]$ is the principal value argument function.

\begin{theorem}
\label{thm:props_kc}
Let $A \in \C^{n \times n}$ with $\alpha(A) \leq 0$ and $0 \not\in \Lambda(A)$.
Then for any $\gamma \geq 0$, the function $\sgfn(\theta)$ defined in \eqref{eq:dist_kc} has the following properties:
\begin{enumerate}[label=\roman*.]
\item $\sgfn(\theta) \geq 0$ for all $\theta \in \mathcal{D} \coloneqq (-\tfrac{\pi}{2},\tfrac{\pi}{2})$,
\item $\sgfn(\theta) = 0$ if and only if there exists $\imagunit r \in \Lambda(M,N_\theta)$ with $r \in \R$ and $r > 0$,
\item $\sgfn(\theta)$ is continuous on its entire domain $\mathcal{D}$, 
\item $\sgfn(\theta)$ is differentiable at a point $\theta$ if the eigenvalue $\lambda \in \Lambda(M,N_\theta)$ attaining the value of $\sgfn(\theta)$ is unique and simple.
\end{enumerate}
Furthermore,  the following properties hold for the set $\sgset(\gamma)$ defined in \eqref{eq:set_kc}:
\begin{enumerate}[label=\roman*.,resume]
\item if $\Kinv < \gamma$, then 
	$0 < \mu(\sgset(\gamma))$,
\item $\gamma_1 \leq \gamma_2$ if and only if $\mu (\sgset(\gamma_1)) \leq \mu (\sgset(\gamma_2))$,
\item $\lim_{\gamma \to \infty} \mu(\sgset(\gamma)) = \pi$,
\end{enumerate}
where $\mu(\cdot)$ is the Lebesgue measure on $\R$.
\end{theorem}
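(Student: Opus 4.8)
The plan is to establish the properties in order, dividing the work into the pointwise properties of $\sgfn$ (items i--iv) and the measure-theoretic properties of $\sgset$ (items v--vii), since these rely on quite different tools. For item~i, I would simply note that $\Arg(-\imagunit\lambda)^2 \geq 0$ for any $\lambda \neq 0$, and that by \cref{thm:zero_kc} the assumption $0 \notin \Lambda(A)$ guarantees $0 \notin \Lambda(M,N_\theta)$, so the argument is always well-defined and the minimum in \eqref{eq:dist_kc} is taken over a nonempty finite set (by \cref{thm:MN_kc}, using $\alpha(A) \le 0$ to keep things controlled, though mainly regularity suffices). For item~ii, I would argue both directions: if $\sgfn(\theta) = 0$, the attaining eigenvalue $\lambda$ has $\Re\lambda \le 0$ and $\Arg(-\imagunit\lambda) = 0$, i.e.\ $-\imagunit\lambda$ is real and positive, so $\lambda = \imagunit r$ with $r > 0$; conversely, if $\imagunit r \in \Lambda(M,N_\theta)$ with $r > 0$, then $\lambda = \imagunit r$ has $\Re\lambda = 0 \le 0$ and contributes the value $0$ to the minimum, which by item~i is then exactly $0$. (One must double-check that $r>0$, not $r<0$; the half-plane restriction $\Re\lambda \le 0$ together with imaginary-axis symmetry of the sHH spectrum means that if $\imagunit r$ is in the spectrum so is $-\imagunit r$ with a vanishing real part, so this is a matter of choosing the representative consistently.)

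For continuity (item~iii), the key point is that the eigenvalues of $(M,N_\theta)$ vary continuously with $\theta$: since $0 \notin \Lambda(A)$ ensures regularity of the pencil for all $\theta$ (by \cref{thm:zero_kc}), and the eigenvalues remain finite when $|\gamma\costh| \neq 1$ (when $|\gamma\costh| = 1$ one uses the pencil formulation directly and the relevant eigenvalue $\imagunit r$ is still finite since $r \neq 0$ on any level-set point), the spectrum depends continuously on $\theta$. The map $\lambda \mapsto \Arg(-\imagunit\lambda)^2$ is continuous away from the branch cut, and I would argue that the branch cut (the negative real axis for $\Arg$, i.e.\ $-\imagunit\lambda$ negative real, i.e.\ $\lambda = -\imagunit s$ with $s > 0$) is harmless here because the squared-argument value there equals $\pi^2$, which is the maximal value, so it never affects the minimum unless \emph{all} eigenvalues in the left half-plane sit on it — and then the min is $\pi^2$ approached continuously from either side. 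The cleanest route is: $\sgfn$ is a min of finitely many continuous functions of $\theta$ on a compact piece of spectrum, hence continuous; the possible merging/splitting of eigenvalues is absorbed by the min. For differentiability (item~iv), under the stated hypothesis the attaining eigenvalue $\lambda$ is simple and isolated as the argmin, so standard first-order eigenvalue perturbation theory for the regular pencil $(M,N_\theta)$ gives a differentiable eigenvalue branch, and composing with the smooth map $\lambda\mapsto\Arg(-\imagunit\lambda)^2$ (smooth off the branch cut, and $\lambda \neq -\imagunit s$ since otherwise $\sgfn(\theta)=\pi^2$ is a maximum, handled separately) yields differentiability of $\sgfn$ at $\theta$.

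For the set properties, I would first observe that $\sgset(\gamma)$ is the interior of $\{\theta : \sgfn(\theta)=0\}$, an open set, so $\mu(\sgset(\gamma))$ is well-defined. For item~v: if $\Kinv < \gamma$, there is a point $(\tilde r,\tilde\theta)$ with $g(\tilde r,\tilde\theta) < \gamma$ strictly inside the level set; I would use \cref{thm:MN_kc} (or rather the continuity of $g$ and an open-mapping/implicit-function argument) to show that an entire open arc of angles near $\tilde\theta$ has rays meeting the open $\gamma$-sublevel set, hence meeting the $\gamma$-level set itself, so $\sgfn$ vanishes on an open set of positive measure — the strictness of $\Kinv < \gamma$ is exactly what gives the open neighborhood rather than a single angle. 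For item~vi, the monotonicity: larger $\gamma$ means a larger $\gamma$-level set (sublevel sets of $\smin$ nest), so any ray meeting the smaller level set meets the larger one, giving $\{\sgfn_{\gamma_1}=0\}\subseteq\{\sgfn_{\gamma_2}=0\}$ up to the interior operation, hence $\mu(\sgset(\gamma_1)) \le \mu(\sgset(\gamma_2))$ for $\gamma_1\le\gamma_2$; for the converse direction one needs that \emph{strict} increase in $\gamma$ strictly enlarges the angular footprint, which follows from item~v applied with $\Kinv$ replaced by the infimum over the relevant angular slice — this reverse implication is the fiddliest bookkeeping. For item~vii, as $\gamma\to\infty$ the $\gamma$-level set of $g$ swells to fill the entire open right half-plane (since $g(r,\theta)$ is finite there and $\to$ something bounded, or more carefully, for any ray direction $\theta \in (-\tfrac\pi2,\tfrac\pi2)$ one can find $r$ with $g(r,\theta)$ as small as desired by taking $r$ near a pole of the resolvent), so every angle in $(-\tfrac\pi2,\tfrac\pi2)$ eventually lies in $\sgset(\gamma)$, whence $\mu\to\pi$; the monotone limit exists by item~vi.

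The main obstacle I anticipate is the continuity argument in item~iii at the parameter values where either two eigenvalues collide (so the argmin in \eqref{eq:dist_kc} is non-unique) or the attaining eigenvalue crosses the $\Arg$ branch cut; handling these requires care to see that the $\min$ and the squaring conspire to keep $\sgfn$ continuous (the squared argument is designed precisely so that the discontinuity of $\Arg$ at $\pm\pi$ becomes a non-issue, since $(\pm\pi)^2$ agree). A secondary subtlety is making the "ray meets the level set" arguments in items v--vii rigorous: one wants to translate a statement about the $2$D level set of $g$ into a statement about which angular rays hit it, and the cleanest tool is \cref{thm:MN_kc} combined with continuity of $g$, using \cref{thm:bounded_cont} to know the level set is bounded (so rays exit it and the intersection structure is tame).
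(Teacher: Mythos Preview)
Your proposal is largely aligned with the paper's proof for items i--vi: both establish i--ii by direct inspection of $\Arg(-\imagunit\lambda)^2$, use continuity of eigenvalues (plus $0\notin\Lambda(A)$ via \cref{thm:zero_kc}) for iii, invoke standard simple-eigenvalue perturbation theory for iv, and for v--vi argue via the strict sublevel set $\mathcal{L}_\gamma$ and the intermediate value theorem along rays (the paper makes explicit the key fact $\lim_{r\to 0^+} g(r,\theta)=\infty$, which you use only implicitly via \cref{thm:bounded_cont} and ``rays exit the level set''). Your treatment of the branch-cut issue in iii is more careful than the paper's, which simply appeals to continuity of eigenvalues.

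There is, however, a genuine error in your argument for item vii. You claim that along any ray with $\theta\in(-\tfrac\pi2,\tfrac\pi2)$ one can make $g(r,\theta)$ ``as small as desired by taking $r$ near a pole of the resolvent.'' But the poles of $(zI-A)^{-1}$ are the eigenvalues of $A$, and since $\alpha(A)\le 0$, these all lie in the \emph{closed left half-plane}; a ray $r\e^{\imagunit\theta}$ with $r>0$ and $|\theta|<\tfrac\pi2$ lies entirely in the \emph{open right half-plane} and never approaches an eigenvalue. So $g(r,\theta)$ need not become small along such a ray. The paper's argument instead computes $\lim_{r\to\infty} g(r,\theta)=\sec\theta$; combined with $\lim_{r\to 0^+} g(r,\theta)=\infty$, the intermediate value theorem gives a level-set crossing for every $\theta$ with $\sec\theta\le\gamma$, and this set of angles fills $(-\tfrac\pi2,\tfrac\pi2)$ as $\gamma\to\infty$. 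Your earlier parenthetical ``$g(r,\theta)$ is finite there and $\to$ something bounded'' was on the right track---just compute that bounded limit explicitly and drop the resolvent-pole claim.
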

\begin{proof}
We begin with $\sgfn(\theta)$.
The first and second properties hold by construction, since $-\imagunit \lambda$ in \eqref{eq:dist_kc} is always in the (closed)
upper half of the complex plane.  The third property is a consequence
of the continuity of eigenvalues and our assumption that $0 \not\in \Lambda(A)$,
which via \cref{thm:zero_kc}, ensures that zero is never an eigenvalue of $(M,N_\theta)$ for any $\theta$.
The fourth property follows from standard perturbation theory for simple eigenvalues and by the definition of $\sgfn(\theta)$.

We now turn to $\sgset(\gamma)$, which since it is defined as an interior, is thus open and measurable.
Let $(r_\star,\theta_\star)$ be a global minimizer, i.e., $g(r_\star,\theta_\star) = \Kinv$,
where $r_\star > 0$ and $\theta_\star \in \mathcal{D}$, and 
$\mathcal{L}_\gamma \coloneqq \{ (r,\theta) : g(r,\theta) < \gamma, r > 0, \theta \in \mathcal{D})\}$
be a strict lower level set of $g(r,\theta)$.
As $\mathcal{L}_\gamma$ is open, there exists an open disk neighborhood $\mathcal{N} \subset \mathcal{L}_\gamma$ about $(r_\star,\theta_\star)$,
so let $\mathcal{T}$ denote the (positive-length) interval of angles specifying the rays from the origin that intersect $\mathcal{N}$.
Since $g(r,\theta) < \gamma$ for all points in $\mathcal{N}$ and $\lim_{r \to 0^+} g(r,\theta) = \infty$ for any $\theta \in \mathcal{D}$ (as $0 \not\in \Lambda(A)$), it follows by continuity of $g(r,\theta)$ that for every $\theta \in \mathcal{T}$ there exists at least one $\tilde r \in (0,r_\star)$
such that $g(\tilde r,\theta) = \gamma$.  Thus, by \cref{thm:MN_kc} it follows that $\sgfn(\theta) = 0$ for all $\theta \in \mathcal{T}$, 
and as $\mu(\mathcal{T}) > 0$ and $\mathcal{T} \subset \sgset(\gamma)$, the fifth property holds.
The sixth property holds by noting that $\gamma_1 \leq \gamma_2$ if and only if $\mathcal{L}_{\gamma_1} \subseteq \mathcal{L}_{\gamma_2}$,
which in turn is equivalent to $\sgset(\gamma_1) \subseteq \sgset(\gamma_2)$. 
To see this, consider any ray from the origin that intersects $\bd \mathcal{L}_{\gamma_1}$, say, at point $(\hat r,\theta)$.  Then $g(\hat r,\theta) = \gamma_1$,
and so as in the argument for the fifth property, there exists $\tilde r \in (0,\hat r)$ such that $g(\tilde r,\theta) = \gamma_2$;
hence this ray must intersect $\bd \mathcal{L}_{\gamma_2}$ at $(\tilde r,\theta)$.  Thus, $g_{\gamma_1}(\theta) = 0$ implies $g_{\gamma_2}(\theta) = 0$,
and so $\sgset(\gamma_1) \subseteq \sgset(\gamma_2)$.
Now suppose $\sgset(\gamma_1) \supset \sgset(\gamma_2)$ and let $\theta \in \sgset(\gamma_1) \setminus \sgset(\gamma_2)$,
hence $g_{\gamma_1}(\theta) = 0$ but $g_{\gamma_2}(\theta) > 0$.
Then $g(\hat r,\theta) = \gamma_1$ holds for some $\hat r > 0$, but $g(r,\theta) \neq \gamma_2$ for all $r \in (0,\infty)$,
and so $\gamma_2 < \min_{r > 0} g(r,\theta) \leq \gamma_1$, a contradiction.
For the seventh property, we first note that $\lim_{r \to \infty} g(r,\theta) = \sec \theta \geq 1$; hence for any $\theta$
such that $\sec \theta \leq \gamma$, there again must exist  $\tilde r > 0$ such that $g(\tilde r,\theta) = \gamma$.
Thus, it is clear that $\lim_{\gamma \to \infty} \mu(\sgset(\gamma)) = \pi$ must hold.
\end{proof}

Taken together with Theorem~\ref{thm:MN_kc} and  Remark~\ref{rem:min_sv}, it clear that 
$\sgfn(\theta)$ meets our new criteria for a level-set test.
Given $\gamma \geq 0$, $\tilde r > 0$ and some $\tilde \theta \in \mathcal{D}$, 
if point $(\tilde r,\tilde \theta)$
is in the $\gamma$-level set of $g(r,\theta)$, then by Theorem~\ref{thm:MN_kc}, $\imagunit \tilde r$
must be an eigenvalue of matrix pencil $(M,N_{\tilde\theta})$ and so $\sgfn(\tilde \theta) = 0$ holds.
If $\sgfn(\tilde \theta) = 0$, 
by definition there exists $\imagunit \tilde r \in \Lambda(M,N_{\tilde\theta})$ with $r > 0$,
and so by Theorem~\ref{thm:MN_kc}, 
$\gamma$ must be a singular value of $G(\tilde r,\tilde \theta)$.
Thus by Remark~\ref{rem:min_sv}, point $(\tilde r,\tilde \theta)$ must either be in the $\gamma$-level set 
of $g(r,\theta)$ or some other $\tilde\gamma$-level set with $\tilde \gamma < \gamma$.
Hence, $\sgfn(\theta) = 0$ is associated with new starting points for optimization such that a better (lower) minimizer can be found.
Finally, if $\sgfn(\theta) > 0$ for all $\theta \in \mathcal{D}$,
then $(M,N_\theta)$ has no imaginary eigenvalues on the positive imaginary axis for any $\theta \in \mathcal{D}$, so again by Theorem~\ref{thm:MN_kc},
$\gamma$ is not a singular value of $G(r,\theta)$ 
for any $r > 0$ and $\theta \in \mathcal{D}$.
This in turn means the $\gamma$-level set of $g(r,\theta)$ is empty.
As $g(r,\theta)$ is continuous, $\gamma < \Kinv$ must hold.

\begin{remark}
As we will approximate $\sgfn(\theta)$ via interpolation,
the presence of the square in $\Arg(-\imagunit \lambda)^2$
is to help smooth out the numerically difficult high rate of change 
that $\Arg(-\imagunit \lambda)$ would otherwise have.
To understand this, suppose that the $\gamma$-level set of $g(r,\theta)$ 
consists of a single continuous closed curve enclosing a nonempty convex interior.
Then $\sgset(\gamma) \subset (-\tfrac{\pi}{2},\tfrac{\pi}{2})$ is simply a single interval 
and for any $\theta$ in $\sgset(\gamma)$, $(M,N_\theta)$ must have two
distinct eigenvalues $\imagunit r_1$ and $\imagunit r_2$ with $r_1,r_2 > 0$.
However, as~$\theta$ approaches either end of interval $\sgset(\gamma)$,
this pair will first coalesce on the imaginary axis and then split apart again,
with both eigenvalues moving very rapidly off of the imaginary axis (in opposite directions).
\end{remark}

\def\imgscale{0.33}

\begin{figure}[!t]
\centering
\subfloat[Level sets]{
\includegraphics[scale=\imgscale,trim={0.9cm 0.05cm 1.5cm 0.8cm},clip]{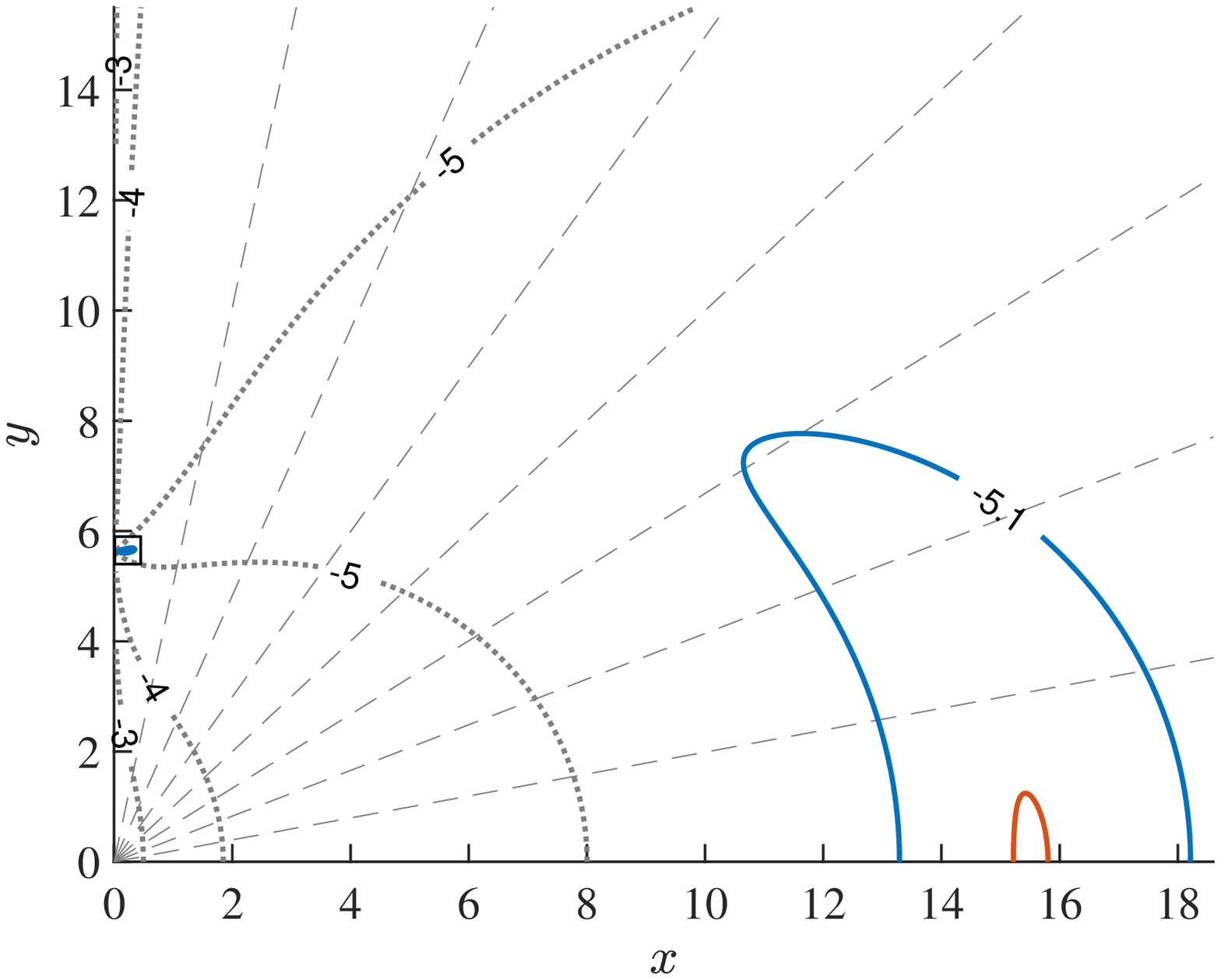} 
\label{fig:kcl}
} 
\subfloat[$\sgfn(\theta)$ on $\left[0,\tfrac{1}{2}\pi\right)$]{
\includegraphics[scale=\imgscale,trim={0.3cm 0.05cm 1.5cm 0.8cm},clip]{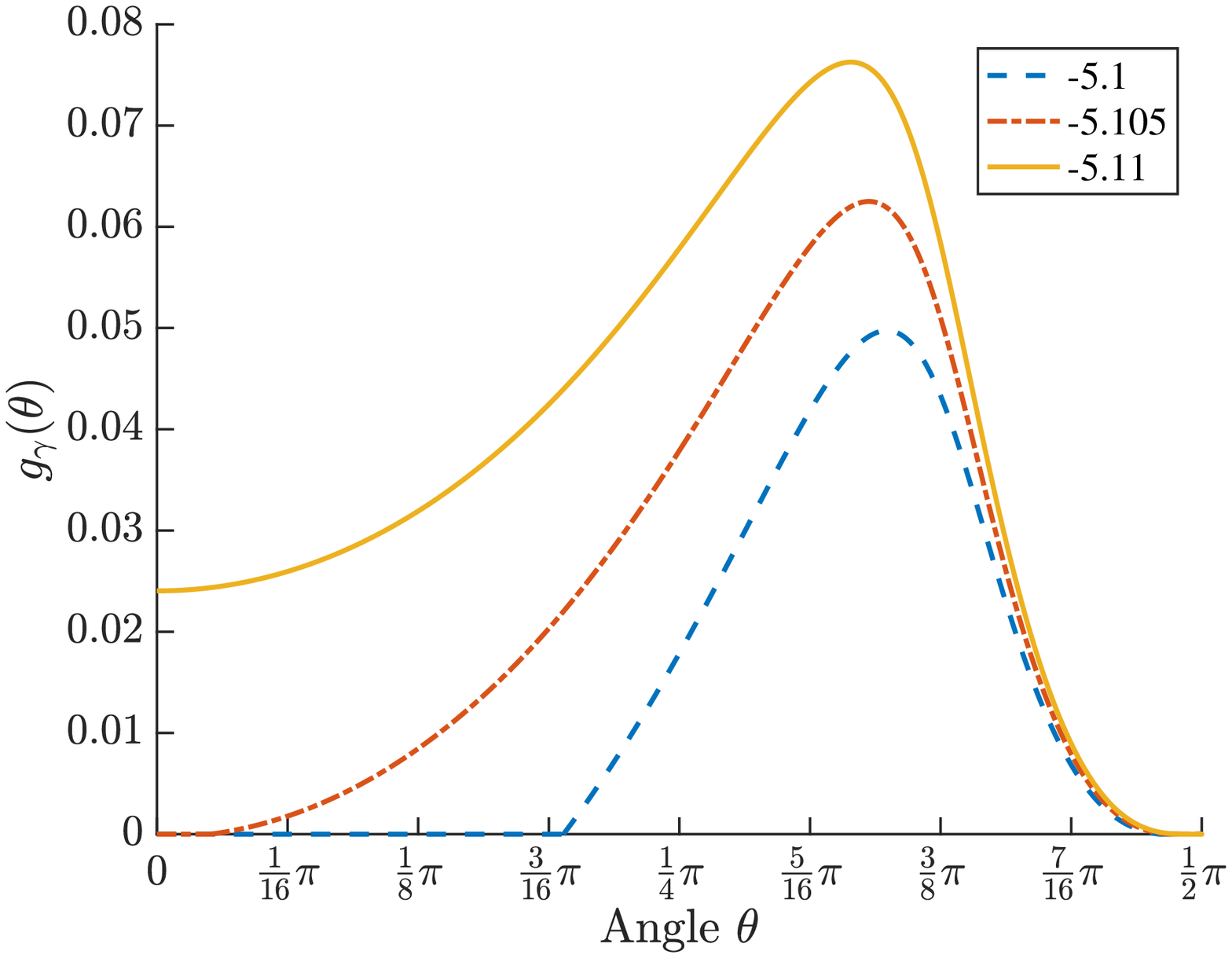} 
\label{fig:kcd}
}
\\
\subfloat[Level sets (enlarged view)]{
\includegraphics[scale=\imgscale,trim={0.9cm 0.05cm 1.3cm 0.3cm},clip]{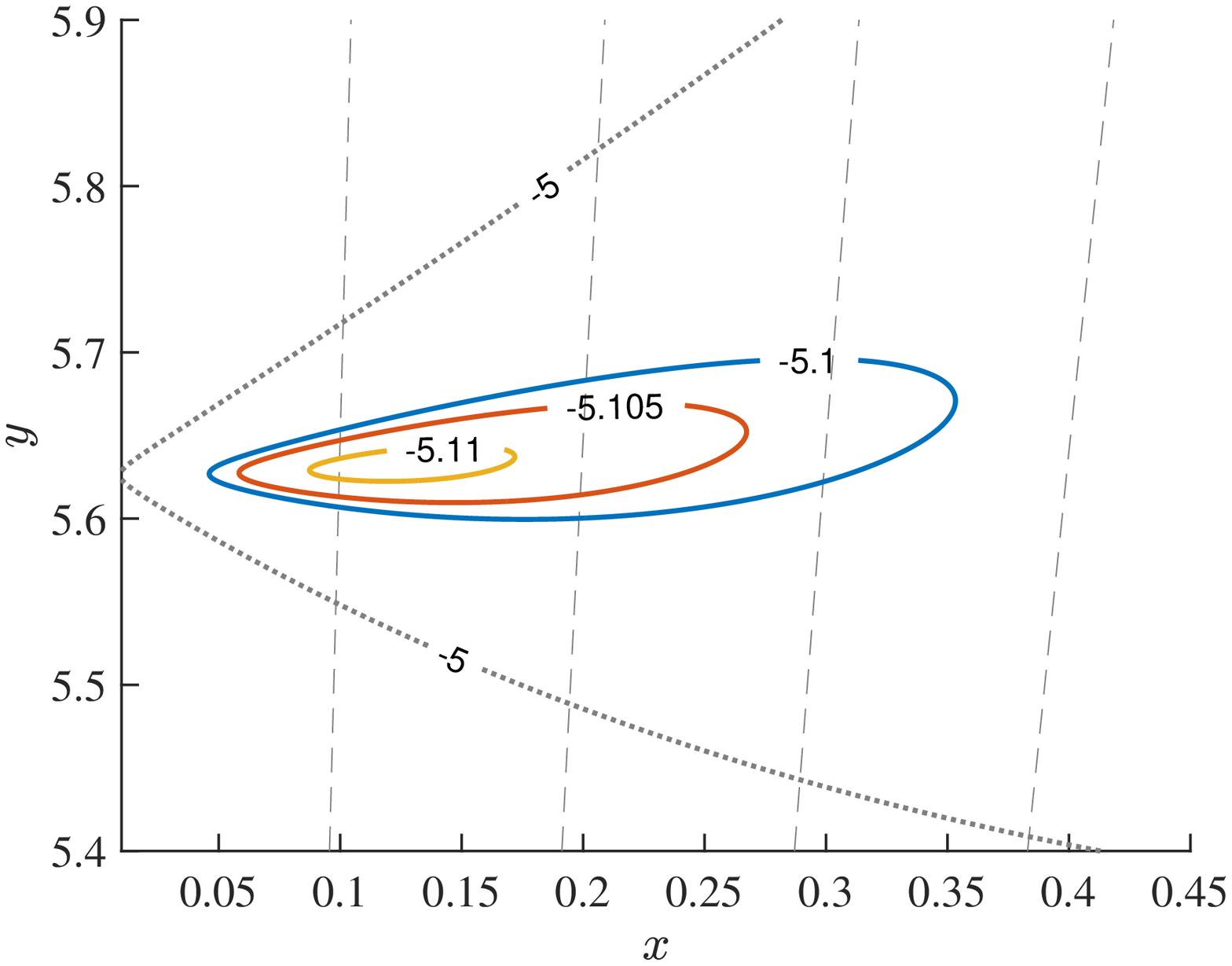} 
\label{fig:kcl_zoom}
}
\subfloat[$\sgfn(\theta)$ on $\left[\tfrac{3}{2},\tfrac{1}{2}\pi\right)$]{
\includegraphics[scale=\imgscale,trim={0.3cm 0.05cm 1.5cm 0.3cm},clip]{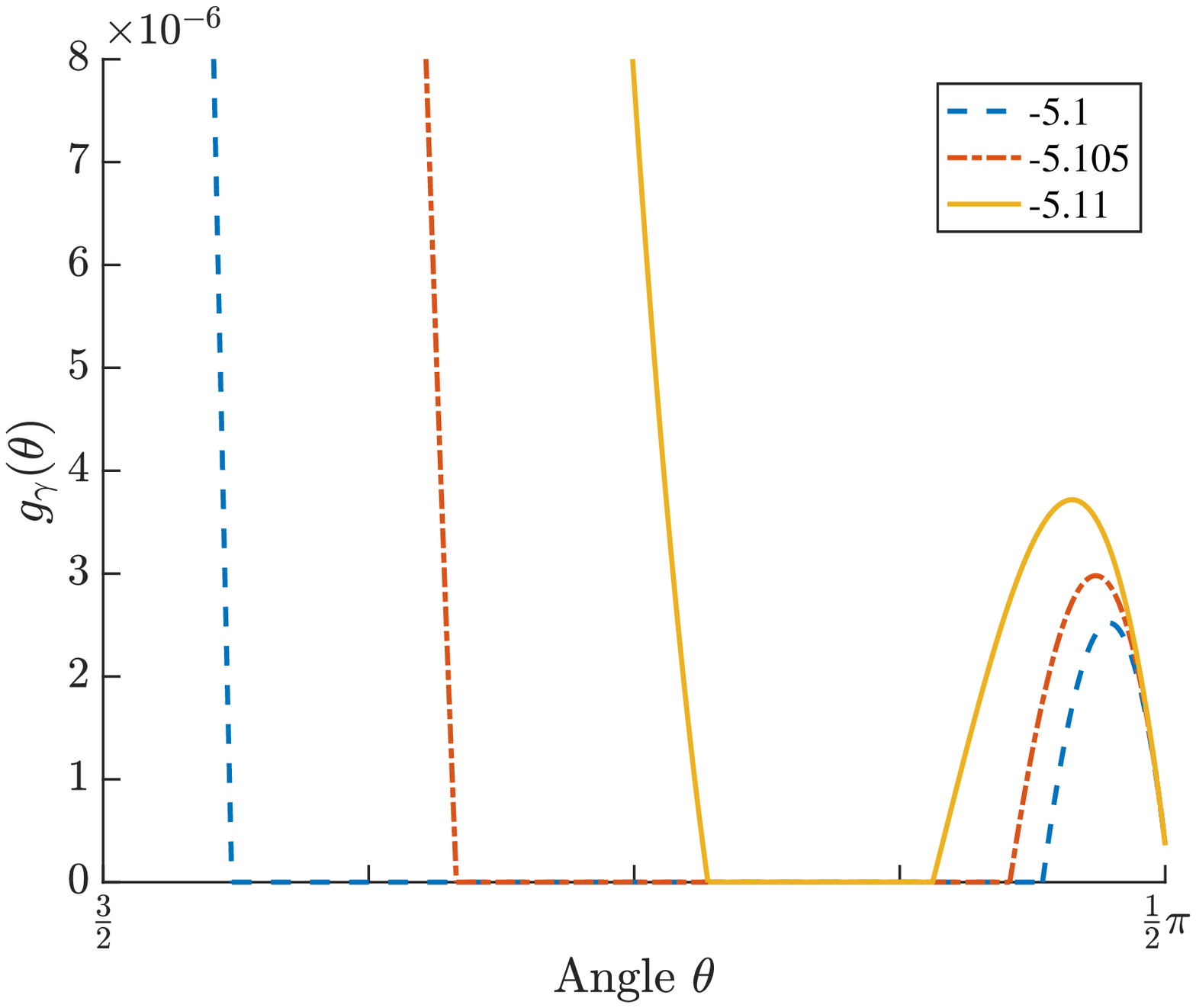} 
\label{fig:kcd_zoom}
}
\caption{The top left pane shows a contour plot of the level sets 
(in $\log_{10}$ scale, with label $k$ denoting $10^k$) of the 
objective function in \eqref{eq:kinv_cont} for a continuous-time example, with $z = x+\imagunit y$.
As this matrix is real, only the upper right quadrant of the complex plane is shown.
The global minimizer of  \eqref{eq:kinv_cont} lies in the small boxed area near $(x,y) \approx (0,6)$;
an enlarged view of this region is shown in the bottom left pane.
Contours are shown for $k=-3,-4,-5$ (dotted), $k = -5.1$ (solid), $k = -5.105$ (solid, unlabeled at top left),
and $k=-5.11$ (solid, not visible at top left).
For each of the three solid contours, 
$\sgfn(\theta)$ for $\gamma = 10^k$ is plotted in the right panes,
for the respective regions shown in the left panes.
For each angle tick mark in the right panes, the corresponding ray from the origin is shown as a dashed line in the  left panes.
It is easy to see the correspondence between the level sets for $k \in \{-5.1,-5.105,-5.11\}$ in the left panes and where their associated functions $\sgfn(\theta)$ 
are zero in the right panes.
}
\label{fig:kinv_cont_ex}
\end{figure}

In Figure~\ref{fig:kinv_cont_ex}, we show plots of $\sgfn(\theta)$ for different values of 
$\gamma$ for the $10 \times 10$ continuous-time example used in \cite[Section~8]{Mit19}.
The example is based on a demo from EigTool \cite{eigtool},
specifically $A = B - \kappa I$, where  $B = \texttt{companion\_demo(10)}$
and $\kappa = 1.001\alpha(B)$.
Since this matrix is real-valued, the level sets of $g(r,\theta)$ 
are symmetric with respect to the real axis, and so it is only necessary to sweep the upper right quadrant
of the complex plane, i.e., the domain of $\sgfn(\theta)$ can be reduced to $[0,\tfrac{\pi}{2})$.

Although we do not know of an analytic way of finding zeros of $\sgfn(\theta)$,
it is a continuous function of one real variable on a fixed finite interval
which we can approximate via interpolation.
Interpolation-based approximation of $\sgfn(\theta)$ is practical
as $\sgfn(\theta)$ is a rather well behaved on its finite domain and is relatively cheap to evaluate.
Moreover, even though $\sgfn(\theta)$ may be nondifferentiable at some points, 
modern interpolation software is adept at approximating functions that are nonsmooth and 
even discontinuous or have singularities.
Thus, as finding roots (and extrema) of polynomial or piecewise-polynomial interpolants is easy, approximating $\sgfn(\theta)$ via interpolation allows a way to find where $\sgfn(\theta)=0$ holds,
and in turn, find $\gamma$-level set points of $g(r,\theta)$.
Moreover, as we are about to explain, often a high-fidelity approximation for $\sgfn(\theta)$ 
is only needed once $\gamma \approx \Kinv$ holds, i.e., once our new method has converged.
Finally, note that even without interpolation, zeros of $\sgfn(\theta)$ may be found via sampling,
since by \cref{thm:props_kc}, $\mu(\sgset(\gamma)) > 0$ must hold if  $\gamma > \Kinv$.

\begin{algfloat}[!t]
\begin{algorithm}[H]
\floatname{algorithm}{Algorithm}
\caption{Interpolation-based Globality Certificate Algorithm}
\label{alg:interp}
\begin{algorithmic}[1]
	\REQUIRE{  
		$A \in \C^{n \times n}$ (nonnormal, $\alpha(A) \leq 0$, and $0 \not\in \Lambda(A)$) and $z_0 \in \C$ with $\Re z_0 > 0$.
		}
	\ENSURE{ 
		$\gamma^{-1} \approx \Kcon$ (continuous-time).
		\\ \quad
	}

	\STATE $\mathcal{D} \gets (-\tfrac{\pi}{2},\tfrac{\pi}{2})$
	\IF { $A$ is real }
		\STATE $\mathcal{D} \gets [0,\tfrac{\pi}{2})$
	\ENDIF
	\WHILE { true } 
		\STATE $\gamma \gets $ computed locally/globally minimal value of \eqref{eq:kinv_cont} initialized from $z_0$
		\STATE \COMMENT{Begin approximating $\sgfn(\theta)$ to check convergence or find new starting points}
		\STATE $p_\gamma(\theta) \gets 1$ \COMMENT{Initial guess for interpolant $p_\gamma(\theta)$ for 
				approximating $\sgfn(\theta)$}
		\WHILE { $p_\gamma(\theta)$ does not sufficiently approximate $\sgfn(\theta)$ on $\mathcal{D}$} 
			\STATE $[\theta_1,\ldots,\theta_l] \gets$ new sample points from $\mathcal{D}$
			\STATE \COMMENT{If new starting points are detected, restart optimization to lower $\gamma$:}
			\IF { $\sgfn(\theta_j) = 0$ for some $j \in \{1,\ldots,l\}$ }
				\STATE $z_0 \gets$ a point $r \e^{\imagunit \theta_j}$ such that $\imagunit r \in \Lambda(M,N_{\theta_j})$ 
						defined in \eqref{eq:eigMN_kc} with $r > 0$
				\STATE \textbf{goto line 6} \COMMENT{Restart optimization from $z_0$}
			\ENDIF
			\STATE \COMMENT{Otherwise, no starting points detected, keep improving $p_\gamma(\theta)$:}
			\STATE $p_\gamma(\theta) \gets$ improved interpolant of 
					$\sgfn(\theta)$ via $\theta_1,\ldots,\theta_l$
		\ENDWHILE
		\STATE \COMMENT{$p_\gamma(\theta)$ approximates $\sgfn(\theta)$ well and no new starting points were encountered} 
		\STATE \COMMENT {However, do  a final check before asserting that $\sgfn(\theta)$ has no other zeros:} 
		\STATE $[\theta_1,\ldots,\theta_l] = \argmin p_\gamma(\theta)$
		\IF { $\sgfn(\theta_j) = 0$ for some $j \in \{1,\ldots,l\}$ }
			\STATE $z_0 \gets$ a point $r \e^{\imagunit \theta_j}$ such that $\imagunit r \in \Lambda(M,N_{\theta_j})$ 
					defined in \eqref{eq:eigMN_kc} with $r > 0$
			\STATE \textbf{goto line 6} \COMMENT{Restart optimization from $z_0$}
		\ELSE
			\RETURN	\COMMENT{$p_\gamma(\theta) \approx \sgfn(\theta)$ and $\quad \Longrightarrow \quad \gamma \approx \Kinv$}
		\ENDIF
	\ENDWHILE
\end{algorithmic}
\end{algorithm}
\vspace{-0.3cm}
\algnote{
For simplicity of the pseudocode, we assume that optimization 
converges to local/global minimizers exactly and $z_0$ computed in lines 13 and 23
is never a stationary point of \eqref{eq:kinv_cont}.
Lines 7--19 describe the core of the interpolation-based globality certificate, where
we assume the interpolation process for approximating $\sgfn(\theta)$ is done via
some reliable method, e.g., Chebfun.
In lines 20--27, where a final check is done before asserting convergence, 
one can additionally/alternatively compute the roots $\{\theta_1,\ldots,\theta_l\}$ of $p_\gamma(\theta)$
and check the value of $\sgfn(\theta)$ at $0.5(\theta_j + \theta_{j+1})$ for all \mbox{$j=1,\ldots,l-1$}.
}
\end{algfloat}

In~\cref{alg:interp}, we provide pseudocode for our new approach to computing continuous-time
$\Kcon$ using optimization-with-restarts and our interpolation-based globality certificates, 
the latter of which we now describe at a high level.  
Our certificates assume that existing interpolation software can approximate $\sgfn(\theta)$ 
to essentially machine precision.
Given $\gamma \geq \Kinv$, our certificate works by beginning to sample~$\sgfn(\theta)$ for
various values of $\theta$ in order to approximate it on $\mathcal{D}$, or 
just $[0,\tfrac{\pi}{2})$ if $A$ is real.
Since interpolation methods are adaptive, this sampling happens in batches, where
$\sgfn(\theta)$ can be evaluated at the requested sample points in an ``embarrassingly parallel" manner.
If any zeros of $\sgfn(\theta)$ are encountered during a given batch of sampling,
then $\gamma$-level-set points of $g(r,\theta)$ have been detected and the interpolation process
is immediately halted.  Then the locations of detected level-set points are computed via \cref{thm:MN_kc},
and these are used to restart optimization in order to find a better (lower) minimizer;
for brevity, we assume that the detected level-set points are not exactly stationary.  
In this case, sampling $\sgfn(\theta)$ and restarting optimization suffices to lower $\gamma$ closer to $\Kinv$,
hence the interpolation process begins anew with the updated version of $\sgfn(\theta)$.
We now consider when interpolation produces a high-fidelity approximation $p_\gamma(\theta)$ for $\sgfn(\theta)$
but without ever encountering zeros during sampling.
To assert that $\gamma = \Kinv$ really holds, the interpolant $p_\gamma(\theta)$ is used to check
if $\sgfn(\theta)= 0 $ on regions that were not sampled.
This is possible to do since, by assumption, $p_\gamma(\theta)$ approximates $\sgfn(\theta)$ to machine precision.  
Thus, the global minimizer(s) of the interpolant $p_\gamma(\theta)$ are computed and used to check 
if $\sgfn(\theta) = 0$ at these angles.  
If this yields \emph{newly detected} non-stationary level-set points
(since this may simply recover known minimizers that were computed in the last round of optimization),
then optimization is restarted to lower $\gamma$ further.
If still no roots of $\sgfn(\theta)$ are discovered, then the roots of $p_\gamma(\theta)$ are computed and
are similarly used to check if $\sgfn(\theta) = 0$ holds elsewhere, specifically by evaluating $\sgfn(\theta)$
at the midpoints between consecutive roots.
Again, if new level-set points are detected, optimization is restarted from them.
Otherwise, our certificate has built a high-fidelity approximation to $\sgfn(\theta)$
and asserts that it cannot find non-stationary points in the $\gamma$-level set.
As $\mu(G(\theta) > 0$ must hold if $\gamma > \Kinv$ by \cref{thm:props_kc}, 
the algorithm concludes with $\gamma = \Kinv$.

Since the main cost of evaluating $\sgfn(\theta)$ is computing the spectrum of $(M,N_\theta)$,
\cref{alg:interp} has a work complexity of $\bigO(kn^3)$ and a memory complexity of $\bigO(n^2)$,
where $k$ is the total number of function values of $\sgfn(\theta)$ incurred (over all values of $\gamma$ encountered).  
The cost of finding minimizers of \eqref{eq:kinv_cont}, the other major component of \cref{alg:interp},
can be ignored, as (quasi-)Newton methods only require a handful of iterations to converge (for \emph{two}-variable problems),
while evaluating the function value and gradient/Hessian of the objective function in \eqref{eq:kinv_cont}
can be done with at most  $\bigO(n^3)$ work and $\bigO(n^2)$ memory; for more details, see our comments in the introduction 
on this.
As we show in the experiments, when $\gamma > \Kinv$,
very few samples are needed before a restart occurs.
Meanwhile, when $\gamma = \Kinv$,
the number of interpolation points needed to build a high-fidelity approximation
to $\sgfn(\theta)$ is not necessarily dependent on $\n$, and in fact, often acts more like a constant,
albeit a large one.  
The combination of $k$ not being too large and that 
only a single high-fidelity approximation to $\sgfn(\theta)$
is typically needed means that our interpolation-based globality certificates can be orders of magnitude faster
than earlier techniques based on solving fewer but \emph{much} larger eigenvalue problems.
Moreover, using parallel processing for the sampling phases only improves upon this
already large performance difference.
Finally, in stark contrast to all but one of the methods discussed in \S\ref{sec:intro_dtu} and \S\ref{sec:intro_kreiss},
our new level-set approach does not crucially rely on any \emph{single} computation for correctness.
The only way our new interpolation-based
approach can fail to restart optimization is if rounding errors prevent 
detection of level-set points for \emph{every} sampled root of $\sgfn(\theta)$;
 as $\mu(G(\theta)) > 0$ holds when $\gamma > \Kinv$, this seems quite unlikely
and so our new approach is more numerically reliable than previous ones.

\begin{remark}
Note that our interpolation-based globality certificates have two key differences 
to the supervised techniques discussed in the introduction for estimating Kreiss constants.
The first and more important difference is that a global maximizer of \eqref{eq:k1d_cont}
may be anywhere in $[0,\infty)$ and may occur on a very fast time scale,
which can make finding such maximizers very difficult.
Here, $\sgfn(\theta)$ is defined on \emph{the fixed finite interval} $(-\tfrac{\pi}{2},\tfrac{\pi}{2})$,
and its zeros form a subset with \emph{positive measure} when \mbox{$\gamma > \Kinv$}.
Hence finding zeros of $\sgfn(\theta)$ should be substantially easier
than finding global maximizers of \eqref{eq:k1d_cont}.
Second, $\sgfn(\theta)$ is more reliable to compute and cheaper to obtain;
computing $\aleps(A)$ via the criss-cross algorithms of \cite{BurLO03}
or \cite{BenM19} often involves computing all eigenvalues of several $2n \times 2n$ matrices.
\end{remark}

\begin{remark}
Certainly our certificate function defined in \eqref{eq:dist_kc} is not the only 
possible choice but one might wonder why we did not choose something simpler, e.g.,
an indicator function.  The reason is that if $\sgfn(\theta)$ were to return a 
fixed positive value whenever the associated ray does not intersect the level set,
then interpolation software may erroneously conclude 
 with very few sample points that the function is constant.
 This is because the error between the interpolant and $\sgfn(\theta)$
 would be exactly zero if none of the interpolation points happen to fall in $\sgset(\gamma)$,
 which may be small when $\gamma$ is close to $\Kinv$.
 Defining $\sgfn(\theta)$ so that it generally varies with $\theta$
 helps to ensure that the function is sufficiently sampled.
\end{remark}

\subsection{Efficient and robust evaluation of $\sgfn(\theta)$}
\label{sec:fast_eval}
By using an sHH structure-preserving eigensolver to compute $\Lambda(M,N_\theta)$,
imaginary eigenvalues will have exactly zero real part, 
and so roots of $\sgfn(\theta)$ should generally be computed as exact roots, 
i.e., $\sgfn(\theta)$ should be exactly zero numerically 
whenever $\theta$ corresponds to a ray intersecting the $\gamma$-level set.
While this is clearly appealing, 
as mentioned in \S\ref{sec:level_sets_kc}, the downside of structure preservation is that it involves solving 
a generalized eigenvalue problem, which if $N_\theta$ is nonsingular, is many times slower than solving the 
equivalent standard eigenvalue problem $N_\theta^{-1}M$.
However, in our new algorithm, the vast majority of evaluations for $\sgfn(\theta)$ 
will be for values that are nowhere close to being roots.  
This is because at $\gamma = \Kinv$, we expect that $\mu(\sgset(\theta))=0$.
Furthermore, if $\gamma > \Kinv$, $\mu(\sgset(\theta)) > 0$ holds, and so some rounding
error in the computed eigenvalues can typically be tolerated in obtaining roots of $\sgfn(\theta)$.
Consequently, the increased numerical robustness from using structure-preserving eigensolvers
is actually often not relevant in our new certificates.  With this in mind, we propose the following way 
to evaluate $\sgfn(\theta)$ much faster while still maintaining numerical reliability.

Given $\gamma \geq \Kinv$ and $\tilde \theta \in \mathcal{D}$,
if $N_{\tilde \theta}$ is singular, then $\sgfn(\tilde \theta)$ must be evaluated by computing the spectrum of the matrix pencil $(M,N_{\tilde \theta})$, so in this case there is little reason not to use a structure-preserving eigensolver.
However, if $N_{\tilde \theta}$ is nonsingular, then $\sgfn(\tilde \theta)$ is initially evaluated via computing the eigenvalues of $N_{\tilde \theta}^{-1}M$,
which as we have established in \S\ref{sec:level_sets_kc}, 
is not an issue as $\kappa(N_\theta)$ is typically small.
Given some small tolerance $\texttt{tol} > 0$,
if $\sgfn(\tilde \theta)$ is not attained by an eigenvalue $\lambda$ 
such that $\min \{ | \Im \lambda |, | \lambda |\} \leq \texttt{tol}$, i.e., $\lambda$ is deemed not
too close to the positive imaginary axis, then $\sgfn(\tilde \theta)$ 
can be considered to have been computed with sufficient accuracy to assert that angle $\tilde \theta$ is indeed not 
a root. 
Otherwise, the eigenvalues near the positive imaginary axis are, via \cref{thm:MN_kc},
used to check if level-set points to restart optimization have been detected.  
If so, then this is sufficient.
The only case that remains is that eigenvalues near the positive imaginary axis have been computed
but level-set points have not been detected.  As not detecting level-set points \emph{could} be the result of rounding errors,
$\Lambda(M,N_{\tilde \theta})$ is now recomputed using a structure-preserving eigensolver
to either overcome any rounding errors or verify that indeed $\sgfn(\tilde \theta) > 0$ holds.

As we expect $\mu(G(\theta)) = 0$ to hold once $\gamma = \Kinv$,
only a small minority evaluations of $\sgfn(\theta)$ should require the additional computation
with the structure-preserving eigensolver.  As such, the overall running time of our new algorithm
should be much faster than if the structure-preserving eigensolver was always used,
and by construction, numerical reliability remains uncompromised.

\section{A new approach for computing discrete-time Kreiss constants}
\label{sec:kdisc}
We now adapt our new globality certificates to compute discrete-time Kreiss constants to arbitrary accuracy,
i.e., to find global minimizers of \eqref{eq:kinv_disc}.
To do this, we will adapt \cref{alg:interp} and develop a new interpolation-based globality certificate
for discrete-time $\Kcon$.
In this discrete-time setting, a polar parametrization is used for both finding (feasible) minimizers of \eqref{eq:kinv_disc} (see \cite[Section~3.2]{Mit19} for details) and the interpolation-based globality certificate itself.
Thus, consider
\beq
	\label{eq:h_fns}
	h(r,\theta) \coloneqq \smin(H(r,\theta)) 
	\quad \text{and} \quad 
	H(r,\theta) \coloneqq \frac{r\eit I - A}{r - 1},
\eeq
so 
\[ 
	\Kinv = \inf_{r > 0, \, \theta \in (-\pi,\pi]} \, h(r,\theta).
\]
To create a discrete-time $\Kcon$ analogue of $\sgfn(\theta)$,
we make the following assumptions.
If $A$ is normal or $\rho(A) > 1$, computing $\Kcon$ is trivial, so we assume that neither condition holds.
Also, while in the previous section $\sgfn(\theta)$ required that $0 \not\in \Lambda(A)$,
our new certificate for discrete-time $\Kcon$ requires that $\gamma^2 \not\in \Lambda(AA^*)$.

\subsection{Level sets of $h(r,\theta)$ and another 1D radial level-set test}
As a key part of interpolation-based globality certificates is a 1D radial level-set test,
we begin with an analogue of \cref{thm:MN_kc}.
\begin{theorem}
\label{thm:ST_kd}
Let $A \in \C^{n\times n}$ and $\gamma, r, \theta \in \R$ with $r \neq 1$.
Then $\gamma \geq 0$ is a singular value of $H(r,\theta)$ defined in \eqref{eq:h_fns} 
if and only if $\imagunit r$ is an eigenvalue of the skew-Hamiltonian-Hamiltonian 
matrix pencil $(S,T_\theta)$, where
\beq
	\label{eq:eigST_kd}
	S \coloneqq
	\begin{bmatrix}
		A  		& -\gamma I \\
		\gamma I 	& -A^*
	\end{bmatrix}
	\quad \text{and} \quad
	T_\theta \coloneqq
	\begin{bmatrix}
		-\imagunit \eit I 			& \imagunit \gamma I  \\
	 	- \imagunit \gamma I    	& \imagunit \emit I
	\end{bmatrix},
\eeq
$T_\theta$ is singular if and only if $|\gamma| = 1$, and $(S,T_\theta)$ is regular if $|\gamma | \neq 1$.
\end{theorem}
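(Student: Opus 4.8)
The plan is to follow the proof of \cref{thm:MN_kc} almost line for line, with the sHH pencil $(S,T_\theta)$ playing the role that $(M,N_\theta)$ played in the continuous-time setting and the denominator $r-1$ of $H(r,\theta)$ replacing $r\costh$. All the pieces have direct analogues, so there is no new idea required; the work is entirely a careful re-derivation.

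First I would dispatch the structural claims by direct block computation. One checks that $JS = \begin{bsmallmatrix} \gamma I & -A^* \\ -A & \gamma I \end{bsmallmatrix}$ is Hermitian (using $\gamma \in \R$), so $S$ is Hamiltonian, and that $T_\theta^* J = J T_\theta$, so $T_\theta$ is skew-Hamiltonian; hence $(S,T_\theta)$ is sHH. Since $T_\theta$ is built from four $\n\times\n$ blocks that are scalar multiples of the identity, $\det(T_\theta)=(1-\gamma^2)^{\n}$ (the generator being $-\imagunit\eit\cdot\imagunit\emit - \imagunit\gamma\cdot(-\imagunit\gamma)=1-\gamma^2$), so $T_\theta$ is singular precisely when $|\gamma|=1$; and when $|\gamma|\neq 1$ the nonsingularity of $T_\theta$ makes $(S,T_\theta)$ a regular pencil.

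The heart of the matter is the singular value / eigenvalue correspondence. Suppose $\gamma\geq 0$ is a singular value of $H\coloneqq H(r,\theta)$ with right and left singular vectors $v$ and $u$, so $Hv=\gamma u$ and $H^*u=\gamma v$; here $r\neq 1$ makes $H$ well defined and $r\in\R$ gives $H^* = \tfrac{r\emit I - A^*}{r-1}$. Clearing the denominator yields $(r\eit I - A)v=\gamma(r-1)u$ and $(r\emit I - A^*)u=\gamma(r-1)v$. Moving every term carrying a factor of $r$ to one side, multiplying the second relation through by $-1$, and stacking the two with $w\coloneqq\begin{bsmallmatrix} v \\ u \end{bsmallmatrix}$ gives $S w = r\begin{bsmallmatrix}\eit I & -\gamma I \\ \gamma I & -\emit I\end{bsmallmatrix} w$; since the bracketed matrix equals $\imagunit T_\theta$ (equivalently, $-\imagunit$ times it is $T_\theta$), this is $S w = (\imagunit r)T_\theta w$ with $w\neq 0$, i.e., $\imagunit r$ is an eigenvalue of $(S,T_\theta)$ with eigenvector $w$. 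Every step is reversible: from $w=\begin{bsmallmatrix} v \\ u\end{bsmallmatrix}\neq 0$ with $Sw=\imagunit r T_\theta w$, undoing the manipulation and dividing by $r-1$ recovers $Hv=\gamma u$ and $H^*u=\gamma v$, whence $H^*Hv=\gamma^2 v$ exhibits $\gamma$ as a singular value of the square matrix $H$ when $v\neq 0$, while $v=0$ forces $\gamma u=Hv=0$, so $\gamma=0$, which is a singular value because then $H^*u=0$ with $u\neq 0$ makes $H$ singular.

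The only thing requiring genuine care is bookkeeping: keeping the signs on the $\pm\gamma I$ blocks of $S$ correct and placing the $\imagunit$ so the eigenvalue comes out as $\imagunit r$ rather than $-\imagunit r$, together with remembering that the relevant hypothesis here is $r\neq 1$ (the denominator of $H$) rather than $r\neq 0$ as in \cref{thm:MN_kc}. Beyond this I anticipate no real obstacle, since the discrete-time denominator $r-1$ enters the computation in the same structural way that $r\costh$ did previously.
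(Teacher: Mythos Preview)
Your proposal is correct and follows essentially the same approach as the paper's proof: verify the sHH structure, compute $\det(T_\theta)$ to obtain the singularity/regularity claims, and then derive the pencil equation $Sw=(\imagunit r)T_\theta w$ from the singular vector relations by clearing the denominator $r-1$ and rearranging. If anything, your version is slightly more complete, since you spell out the converse direction and the degenerate case $v=0$, whereas the paper only writes out the forward direction and leaves the equivalence implicit.
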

\begin{proof}
It is easy to verify that $S$ and $T_\theta$ are respectively Hamiltonian and skew-Hamiltonian,
and so $(S,T_\theta)$ is an sHH matrix pencil.
Furthermore, the determinant of $T_\theta$ is simply $\det(T_\theta) = 1 - \gamma^2$.
Thus, $| \gamma | \neq 1$, i.e., $T_\theta$ being nonsingular, is
sufficient for $(S,T_\theta)$ to be a regular matrix pencil.
Now suppose $\gamma$ is a singular value of $H(r,\theta)$ with left and right singular vectors $u$ and $v$.
Then the following two equations hold:
\[
	\gamma
	\begin{bmatrix}
		u \\ v
	\end{bmatrix}
	=
	\begin{bmatrix}
		H(r,\theta)  	& 0 \\
		0 			& H(r,\theta)^*
	\end{bmatrix}
	\begin{bmatrix}
		v \\ u
	\end{bmatrix}
	\ \ \text{and} \ \
	\gamma (r-1)
	\begin{bmatrix}
		u \\ v
	\end{bmatrix}
	=
	\begin{bmatrix}
		r\eit I - A  	& 0 \\
		0 		& r\emit I - A^*
	\end{bmatrix}
	\begin{bmatrix}
		v \\ u
	\end{bmatrix}.
\]
Rearranging terms, this is equivalent to
\[
	\begin{bmatrix}
		A  	& 0 \\
		0 	& A^*
	\end{bmatrix}
	\begin{bmatrix}
		v \\ u
	\end{bmatrix}
	-\gamma 
	\begin{bmatrix}
		u \\ v
	\end{bmatrix}
	=
	r
	\begin{bmatrix}
		\eit I  	&	0 \\
		0 		& \emit I
	\end{bmatrix}
	\begin{bmatrix}
		v \\ u
	\end{bmatrix}
	-r\gamma 
	\begin{bmatrix}
		u \\ v
	\end{bmatrix}.
\]
Combining terms and multiplying the bottom block row by $-1$,
we equivalently have
\[
	\begin{bmatrix}
		A  		& -\gamma I \\
		\gamma I  & -A^*
	\end{bmatrix}
	\begin{bmatrix}
		v \\ u
	\end{bmatrix}
	=
	r
	\begin{bmatrix}
		\eit I  	& -\gamma I \\
		\gamma I	& -\emit I
	\end{bmatrix}
	\begin{bmatrix}
		v \\ u
	\end{bmatrix}.
\]
Noting that the matrix on the right multiplied by $-\imagunit$ is $T_\theta$ 
completes the proof.
\end{proof}

The point of Remark~\ref{rem:min_sv}, with appropriate substitutions, 
similarly applies to Theorem~\ref{thm:ST_kd}, $h(r,\theta)$, and $H(r,\theta)$,
and \cref{thm:ST_kd} also allows a way to show that the $\gamma$-level set
of $h(r,\theta)$ is bounded for $\gamma \in [0,1)$.

\begin{theorem}
\label{thm:bounded_disc}
Let $A \in \C^{n\times n}$ and $\gamma \in [0,1)$.  The $\gamma$-level set of $h(r,\theta)$ defined in \eqref{eq:h_fns} is bounded.
Moreover, if $\rho(A) < 1$, the $\gamma$-level set is compact.
\end{theorem}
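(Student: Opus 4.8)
The plan is to follow the template of the proof of \cref{thm:bounded_cont}, with \cref{thm:ST_kd} playing the role of \cref{thm:MN_kc}. First I would establish boundedness. Let $\tilde r\eit$ be any point in the $\gamma$-level set of $h(r,\theta)$ from \eqref{eq:h_fns}, so that $\gamma$ is a singular value of $H(\tilde r,\theta)$; since $H$ is undefined at $r=1$ we necessarily have $\tilde r\neq 1$, and \cref{thm:ST_kd} then gives $\imagunit\tilde r\in\Lambda(S,T_\theta)$. Because $\gamma\in[0,1)$ implies $|\gamma|\neq 1$, the matrix $T_\theta$ is nonsingular for every $\theta$, so $(S,T_\theta)$ has only finite eigenvalues, namely those of $T_\theta^{-1}S$, which depends continuously on $\theta$. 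Hence $m(\theta)\coloneqq\rho(S,T_\theta)$ is continuous and attains a finite maximum $m_\star$ over one full period of $\theta$, e.g.\ $[-\pi,\pi]$. Since $|\imagunit\tilde r|=\tilde r\le m_\star$, the $\gamma$-level set is bounded; note this argument uses nothing about $\rho(A)$, which is consistent with the fact that only boundedness, not compactness, is claimed in general.

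Next I would prove compactness under the extra hypothesis $\rho(A)<1$. The domain of $h(r,\theta)$ corresponds to the open set $\C\setminus(\{0\}\cup\{z:|z|=1\})$ (that is, $r>0$ with $r\neq 1$), on which $h$ is continuous, so the $\gamma$-level set is closed relative to that domain and it only remains to rule out accumulation at the two excluded pieces $\{0\}$ and $\{|z|=1\}$; accumulation at $\infty$ is already excluded by boundedness. For the unit circle: $\rho(A)<1$ forces $\eit I-A$ to be nonsingular for every $\theta$, so $\smin(\eit I-A)$ has a positive minimum $c$ over $[-\pi,\pi]$, whence $\smin(r\eit I-A)\ge c-|r-1|$ and $h(r,\theta)=\smin(r\eit I-A)/|r-1|\to\infty$ as $r\to 1$, uniformly in $\theta$, so no level-set point lies near $r=1$. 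For the origin: as $r\to 0^+$ one has $H(r,\theta)\to A$ uniformly in $\theta$ (indeed $\|H(r,\theta)-A\|=\tfrac{r}{1-r}\|\eit I-A\|\to 0$), hence $h(r,\theta)\to\smin(A)$ uniformly, and the standing assumption $\gamma^2\notin\Lambda(AA^*)$ gives $\gamma\neq\smin(A)$, so no level-set point lies near the origin either. Combining these with boundedness shows the $\gamma$-level set is a closed, bounded subset of $\C$, hence compact.

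I expect the main obstacle to be less the computations than the conceptual point that, unlike in the continuous-time setting, the origin and the unit circle are genuinely separate obstructions. In \cref{thm:bounded_cont} the origin lies on the imaginary axis and the single assumption $0\notin\Lambda(A)$ makes $g$ infinite on the whole imaginary axis at once, simultaneously pushing the level set off both the origin and $\{\Re z=0\}$; here the origin is a point at which $h$ has a \emph{finite} limiting value, namely $\smin(A)$, and is disjoint from the unit circle, so the unit circle must be handled via $\rho(A)<1$ and the origin separately via $\gamma^2\notin\Lambda(AA^*)$. The two uniform-in-$\theta$ limits near $r=1$ and $r=0$ are the only mildly technical ingredients, and the boundedness half is essentially identical to \cref{thm:bounded_cont}.
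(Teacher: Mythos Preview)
Your boundedness argument and your unit-circle argument for compactness are correct and are exactly the paper's approach: the paper's proof is a one-line reference back to the proof of \cref{thm:bounded_cont}, the key observation being that $T_\theta$ is nonsingular for $|\gamma|\neq1$, so $\rho(S,T_\theta)$ is finite and continuous in $\theta$.

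Your separate treatment of the origin, however, is both unnecessary and rests on a hypothesis you do not have. Despite the display ``$\inf_{r>0}$'' below \eqref{eq:h_fns}, the feasible region inherited from \eqref{eq:kinv_disc} is $|z|>1$, i.e., $r>1$; indeed, if $r\in(0,1)$ were admitted, $h$ would vanish at any eigenvalue of $A$ inside the open unit disk and one would get $\Kinv=0$. With the domain $r>1$, the only boundary piece to exclude is the unit circle, which $\rho(A)<1$ handles exactly as you argue, in direct analogy with how $\alpha(A)<0$ pushes the level set off the imaginary axis in \cref{thm:bounded_cont}. The condition $\gamma^2\notin\Lambda(AA^*)$ that you invoke near $r=0$ is \emph{not} a hypothesis of \cref{thm:bounded_disc}; it is introduced only later, as an explicit assumption in \cref{thm:props_kd}. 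Drop the origin paragraph and your proof is complete and coincides with the paper's.
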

\begin{proof}
The proof follows similarly to the proof of \cref{thm:bounded_cont}, with the key part being
that  the eigenvalues of $(S,T_\theta)$ are finite for all $\theta$ (since $T_\theta$ is nonsingular if $\gamma \neq \pm 1$), 
and so $\max_{\theta \in \R} \rho(S,T_\theta)$ must be finite.
\end{proof}

As with our continuous-time certificate using $\sgfn(\theta)$ and $(M,N_\theta)$, 
for discrete-time $\Kcon$ we need to ensure that zero cannot be an eigenvalue of $(S,T_\theta)$,
the precise conditions for which are given by the following result.

\begin{theorem}
\label{thm:zero_kd}
Let $A \in \C^{n\times n}$ and $\gamma, \theta \in \R$.
Then the matrix pencil $(S,T_\theta)$ defined by \eqref{eq:eigST_kd} has zero as an eigenvalue 
if and only if the matrix $AA^*$ has $\gamma^2$ as an eigenvalue.
Consequently, $\gamma^2 \not\in \Lambda(AA^*)$ also ensures that $(S,T_\theta)$ is regular.
\end{theorem}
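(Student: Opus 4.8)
The plan is to follow the template of the proof of \cref{thm:zero_kc}. Since $\det(S - 0\cdot T_\theta) = \det(S)$, zero is an eigenvalue of the pencil $(S,T_\theta)$ if and only if $S$ is singular, so the whole statement reduces to computing $\det(S)$ and showing that it vanishes exactly when $\gamma^2 \in \Lambda(AA^*)$. The one wrinkle compared with the continuous-time case is that $S$ is not block diagonal, so a short determinant computation is needed rather than simply reading the answer off.

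To evaluate $\det(S)$ I would invoke the block-determinant identity $\det\begin{bsmallmatrix} P & Q \\ R & W \end{bsmallmatrix} = \det(PW - QR)$, which holds whenever the two lower blocks $R$ and $W$ commute. For $S$ we have $R = \gamma I$ and $W = -A^*$, which obviously commute, so
\[
	\det(S) = \det\!\big( A(-A^*) - (-\gamma I)(\gamma I) \big) = \det(\gamma^2 I - AA^*).
\]
(If one prefers, when $A$ is nonsingular the same expression drops out of a Schur-complement reduction on the $(1,1)$ block; using the commuting-block identity simply sidesteps any invertibility hypothesis.) Because $AA^*$ is Hermitian, $\det(\gamma^2 I - AA^*) = 0$ precisely when $\gamma^2$ is an eigenvalue of $AA^*$, which is exactly the claimed equivalence.

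For the concluding assertion, assume $\gamma^2 \not\in \Lambda(AA^*)$. Then by the above $\det(S) \neq 0$, so the polynomial $\lambda \mapsto \det(S - \lambda T_\theta)$ is not identically zero --- its value at $\lambda = 0$ is $\det(S) \neq 0$ --- and hence $(S,T_\theta)$ is a regular matrix pencil.

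I do not anticipate a real obstacle here; the only place needing a moment's care is making the determinant identity applicable without assuming $A$ is invertible, which the commuting-block form of Silvester's identity handles directly (a density argument over the invertible matrices would be an alternative route to the same conclusion).
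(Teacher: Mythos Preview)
Your proof is correct and essentially identical to the paper's: both reduce the question to whether $\det(S)$ vanishes and evaluate $\det(S)=\det(\gamma^2 I - AA^*)$ via the block-determinant identity $\det\begin{bsmallmatrix}P&Q\\R&W\end{bsmallmatrix}=\det(PW-QR)$ valid when the lower blocks $R=\gamma I$ and $W=-A^*$ commute. The paper states this in one line, while you spell out the regularity consequence a bit more explicitly, but there is no substantive difference.
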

\begin{proof}
As the blocks of $S$ are all $n \times n$ and 
the lower two blocks $\gamma I$ and $-A^*$ commute,
the if-and-only-if equivalence holds because 
$\det(S) = \det(-AA^* + \gamma^2 I)$.
Lastly, if $0 \not \in \Lambda(S)$, then clearly $(S,T_\theta)$ must be a regular matrix pencil.
\end{proof}

Finally, we consider the condition number of $T_\theta$ and when 
computing the eigenvalues of $(S,T_\theta)$ via $T_\theta^{-1}S$ is possible.
The following result shows that $\kappa(T_\theta) = \kappa(N_\theta)$, i.e., 
$T_\theta$ will generally be very well conditioned for all relevant values of $\gamma$,
hence using $T_\theta^{-1}S$ to compute $\Lambda(S,T_\theta)$ is not problematic.

\begin{theorem}
Let $A \in \C^{n \times n}$ and $\gamma,\theta \in \R$ with $\gamma \neq \pm 1$. 
Then the spectrum of the matrix pencil $(S,T_\theta)$ defined by \eqref{eq:eigST_kd} is 
equal to the spectrum of 
\beq
	\label{eq:eigS_kd}
	S_\theta \coloneqq T_\theta^{-1} S = 
	\frac{\imagunit}{1 - \gamma^2}
	\begin{bmatrix}
	\emit A - \gamma^2 I 	& \gamma(A^* - \emit I)  \\
	\gamma(A - \eit I )  	& \eit A^* - \gamma^2 I
	\end{bmatrix}
\eeq
and if $\gamma \in [0,1)$, then $\max_{\theta \in \R} \kappa(T_\theta) = \tfrac{1+\gamma}{1 - \gamma}$.
\end{theorem}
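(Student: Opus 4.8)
The plan is to mirror the proof of \cref{thm:simpleMN_kc} almost verbatim, since $(S,T_\theta)$ has exactly the same ``scalar blocks'' structure as $(M,N_\theta)$. First I would invoke \cref{thm:ST_kd}, which already establishes that $T_\theta$ is nonsingular precisely when $\gamma \neq \pm 1$. For nonsingular $T_\theta$, the generalized eigenvalue problem $(S,T_\theta)$ has the same (necessarily finite) spectrum as the standard eigenvalue problem associated with $T_\theta^{-1}S$, so the spectral claim reduces entirely to computing $T_\theta^{-1}S$ in closed form and checking it equals the matrix in \eqref{eq:eigS_kd}.

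For the closed form, I would observe that $T_\theta = \imagunit P$ with $P \coloneqq \begin{bsmallmatrix} -\eit I & \gamma I \\ -\gamma I & \emit I \end{bsmallmatrix}$ built from four $n \times n$ blocks that are scalar multiples of the identity, hence pairwise commuting, so the usual $2\times 2$ adjugate/Cramer formula applies blockwise. The associated scalar determinant is $(-\eit)(\emit) - (\gamma)(-\gamma) = -(1-\gamma^2)$, giving
\[
	T_\theta^{-1} = -\imagunit P^{-1} = \frac{\imagunit}{1-\gamma^2}\begin{bmatrix} \emit I & -\gamma I \\ \gamma I & -\eit I \end{bmatrix}.
\]
Multiplying this by $S = \begin{bsmallmatrix} A & -\gamma I \\ \gamma I & -A^* \end{bsmallmatrix}$ and simplifying the four resulting blocks (using $\eit\emit = 1$) yields exactly $S_\theta$ as displayed in \eqref{eq:eigS_kd}. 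This is the one place requiring some care — with the leading factor $\imagunit$ and the signs of the off-diagonal $\gamma$-blocks — but the computation itself is routine.

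For the condition number I would put $T_\theta$ into the form $\begin{bsmallmatrix} aI & bI \\ \overline{b}I & \overline{a}I \end{bsmallmatrix}$ demanded by \cref{lem:condnum} by setting $a = -\imagunit\eit$ and $b = \imagunit\gamma$; since $\gamma \in \R$ one has $\overline{a} = \imagunit\emit$ and $\overline{b} = -\imagunit\gamma$, matching the $(2,2)$ and $(2,1)$ blocks of $T_\theta$. As $|a| = |\eit| = 1$ and $|b| = |\gamma|$, \cref{lem:condnum} gives $\kappa(T_\theta) = \tfrac{1+|\gamma|}{|\,1-|\gamma|\,|}$, which is independent of $\theta$; restricting to $\gamma \in [0,1)$ makes this $\tfrac{1+\gamma}{1-\gamma}$, and the constant function trivially attains this as its maximum over $\theta \in \R$.

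I do not anticipate a genuine obstacle: unlike the continuous-time case where $\kappa(N_\theta)$ varied with $\theta$ through $\costh$ and a (trivial) maximization over $\theta = 0$ was needed, here $\kappa(T_\theta)$ does not depend on $\theta$ at all. The only thing to watch is careful bookkeeping of the imaginary units and signs when forming $T_\theta^{-1}$ and the product $T_\theta^{-1}S$.
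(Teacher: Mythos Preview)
Your proposal is correct and follows essentially the same approach as the paper's proof, which also just computes $T_\theta^{-1}S$ directly and applies \cref{lem:condnum} to $T_\theta$. Your version is in fact more detailed than the paper's terse two-line argument, and your observation that $\kappa(T_\theta)$ is independent of $\theta$ (so no actual maximization is needed, unlike the continuous-time case) is a nice clarifying remark.
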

\begin{proof}
The matrix given in \eqref{eq:eigS_kd} simply follows by using the explicit form of $S_\theta^{-1}$,
which exists if and only if $\gamma \neq \pm 1$.
Applying \cref{lem:condnum} to $T_\theta$ with $\gamma \in [0,1)$,
$\kappa(T_\theta) = \tfrac{1 + \gamma}{1 - \gamma }$, thus completing the proof.
\end{proof}

\subsection{An interpolation-based globality certificate for $h(r,\theta)$}
For \eqref{eq:kinv_disc}, we correspondingly consider sweeping the entire complex via a ray from the origin
to see where it intersects the $\gamma$-level set of $h(r,\theta)$ outside of the closed unit disk.
Thus, we construct a new continuous function $\shfn : (-\pi,\pi] \mapsto [0,\pi^2]$ similar to \eqref{eq:dist_kc}:
\bseq
	\label{eq:gamma_kd}
	\begin{align}
	\label{eq:dist_kd}
	\shfn(\theta) &\coloneqq
	\min \{ \Arg(-\imagunit \lambda)^2 : \lambda \in \Lambda(S,T_\theta), \lambda \not\in [0,\imagunit], \Re \lambda \leq 0\}, \\				 
	\label{eq:set_kd}
	\shset(\gamma) &\coloneqq
	\interior \{ \theta : \shfn(\theta) = 0, \, \theta \in (-\pi,\pi] \},	
	\end{align}
\eseq
similarly keeping in mind that $\Lambda(S,T_\theta)$ always has imaginary-axis symmetry,
regardless of whether or not the level sets of $h(r,\theta)$ have symmetry.

\begin{theorem}
\label{thm:props_kd}
Let $A \in \C^{n \times n}$ with $\rho(A) \leq 1$.
Then for any $\gamma \geq 0$ such that $\gamma^2 \not\in \Lambda(AA^*)$, 
the function $\shfn(\theta)$ defined in \eqref{eq:dist_kd} has the following properties:
\begin{enumerate}[label=\roman*.]
\item $\shfn(\theta) \geq 0$ for all $\theta \in \mathcal{D} \coloneqq (-\pi,\pi]$,
\item $\shfn(\theta) = 0$ if and only if there exists $\imagunit r \in \Lambda(S,T_\theta)$ with $r \in \R$ and $r > 1$,
\item $\shfn(\theta)$ is continuous on its entire domain $\mathcal{D}$, 
\item $\shfn(\theta)$ is differentiable at a point $\theta$ if the eigenvalue $\lambda \in \Lambda(S,T_\theta)$ attaining the value of $\shfn(\theta)$ is unique and simple.
\end{enumerate}
Furthermore,  the following properties hold for the set $\shset(\gamma)$ defined in \eqref{eq:set_kd}:
\begin{enumerate}[label=\roman*.,resume]
\item if $\Kinv < \gamma$, then 
	$0 < \mu(\shset(\gamma))$,
\item $\gamma_1 \leq \gamma_2$ if and only if $\mu (\shset(\gamma_1)) \leq \mu (\shset(\gamma_2))$,
\item if $\gamma > 1$, then $\mu(\shset(\gamma)) = 2\pi$,
\end{enumerate}
where $\mu(\cdot)$ is the Lebesgue measure on $\R$.
\end{theorem}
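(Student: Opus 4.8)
The plan is to mirror the proof of \cref{thm:props_kc} as closely as possible, with the discrete-time objects $(S,T_\theta)$, $h(r,\theta)$, and $H(r,\theta)$ in place of the continuous-time ones, and with the closed unit disk playing the role that the imaginary axis/origin played in the continuous case. First I would dispatch properties i--iv of $\shfn(\theta)$. Properties i and ii are essentially definitional: $-\imagunit\lambda$ lies in the closed upper half-plane by the constraint $\Re\lambda\le 0$, and the explicit exclusion $\lambda\notin[0,\imagunit]$ in \eqref{eq:dist_kd} means that $\shfn(\theta)=0$ forces an eigenvalue $\imagunit r$ with $r\in\R$ and $r>1$ (rather than merely $r\ge 0$), which by \cref{thm:ST_kd} corresponds to a point $r\eit$ strictly outside the closed unit disk — exactly the feasible region for \eqref{eq:kinv_disc}. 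Property iii follows from continuity of eigenvalues together with \cref{thm:zero_kd}: the hypothesis $\gamma^2\notin\Lambda(AA^*)$ guarantees $0\notin\Lambda(S,T_\theta)$ for every $\theta$, so the excluded segment $[0,\imagunit]$ never swallows the minimizing eigenvalue discontinuously, and the $\min$ of a continuous family over a nonempty closed-by-exclusion set is continuous. Property iv is standard first-order perturbation theory for a simple, isolated minimizing eigenvalue, identical to the continuous case.

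For the set-level properties v--vii I would again transcribe the argument from \cref{thm:props_kc}. For property v, take a global minimizer $(r_\star,\theta_\star)$ with $r_\star>1$ and $h(r_\star,\theta_\star)=\Kinv<\gamma$; the strict sublevel set $\{(r,\theta):h(r,\theta)<\gamma,\ r>1\}$ is open, so it contains an open neighborhood of $(r_\star,\theta_\star)$, which in turn is pierced by a positive-length cone of rays. The one subtlety here is the replacement for the fact ``$\lim_{r\to 0^+}g(r,\theta)=\infty$'' used in the continuous proof to guarantee a crossing of the $\gamma$-level along each ray: in the discrete setting one instead uses that $h(r,\theta)\to\infty$ as $r\to 1^+$ (the denominator $r-1$ vanishes while the numerator $r\eit I - A$ stays bounded away from singularity as long as $\eit\notin\Lambda(A)$; and even if $\eit\in\Lambda(A)$ one checks $\smin$ of the quotient still blows up, or one simply notes $h$ is continuous and finite on $\{r>1\}$ with the needed growth), so along each ray in the cone the intermediate value theorem produces $\tilde r\in(1,r_\star)$ with $h(\tilde r,\theta)=\gamma$, hence $\shfn(\theta)=0$ by \cref{thm:ST_kd}. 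Property vi is the monotonicity argument verbatim: $\gamma_1\le\gamma_2$ iff the strict sublevel sets nest iff the angle-sets nest, using in both directions the same ``push the crossing radius inward/outward'' reasoning plus the contradiction argument ($\gamma_2<\min_{r>1}h(r,\theta)\le\gamma_1$). Property vii is where the discrete case genuinely differs from the continuous one: instead of $\lim_{r\to\infty}g(r,\theta)=\sec\theta$, here $\lim_{r\to\infty}h(r,\theta)=\lim_{r\to\infty}\smin\!\big(\tfrac{r}{r-1}\eit I - \tfrac{1}{r-1}A\big)=1$ for \emph{every} $\theta$, since the matrix tends to $\eit I$; therefore if $\gamma>1$ then along \emph{every} ray $h$ drops from $+\infty$ (as $r\to 1^+$) down toward $1<\gamma$, so a crossing exists for all $\theta\in(-\pi,\pi]$, giving $\mu(\shset(\gamma))=2\pi$ — note the full $2\pi$ rather than $\pi$ because there is no right-half-plane restriction in the discrete problem, and no ``$\sec\theta$'' term to make large angles unreachable.

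The main obstacle I anticipate is handling the excluded segment $[0,\imagunit]$ cleanly throughout, and in particular making sure the minimizing set in \eqref{eq:dist_kd} is never empty and that removing $[0,\imagunit]$ does not create a hidden discontinuity in property iii: one must argue that eigenvalues of $(S,T_\theta)$ with $\Re\lambda\le 0$ always exist (by imaginary-axis symmetry of the sHH pencil, for each eigenvalue off the axis its mirror image is present, and any genuinely on-axis eigenvalue counts), and that an eigenvalue cannot migrate into or out of $[0,\imagunit]$ without either hitting $0$ (excluded by \cref{thm:zero_kd}) or hitting $\imagunit$ (i.e. $r=1$, where $T_\theta$'s companion matrix $H(r,\theta)$ is undefined — this boundary case needs a short separate remark, analogous to how the continuous proof handled $r\to 0^+$). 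Once that bookkeeping is in place, everything else is a direct translation of the earlier proof.
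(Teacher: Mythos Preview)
Your proposal is correct and follows the same route as the paper's own proof, which explicitly refers back to the proof of \cref{thm:props_kc} and then lists precisely the modifications you identify: the exclusion of $[0,\imagunit]$ for property~ii, the use of \cref{thm:zero_kd} for continuity, and the replacement limits $\lim_{r\to\infty}h(r,\theta)=1$ and $\lim_{r\to1^+}h(r,\theta)=\infty$ for properties v--vii.

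One correction is needed in your handling of the boundary $r\to1^+$. Your parenthetical claim that ``even if $\eit\in\Lambda(A)$ one checks $\smin$ of the quotient still blows up'' is false: when $\eit$ is a simple unimodular eigenvalue of $A$, $\smin(r\eit I-A)$ vanishes to first order in $r-1$, so $h(r,\theta)=\smin(r\eit I-A)/(r-1)$ has a \emph{finite} limit as $r\to1^+$ along that ray. The paper deals with this by noting that $\lim_{r\to1^+}h(r,\theta)=\infty$ holds for \emph{almost all} $\theta$, the at most $n$ exceptional angles (those of unimodular eigenvalues of $A$) forming a null set that does not disturb the positive-measure conclusions about $\shset(\gamma)$; since $\shset(\gamma)$ is defined as an interior, removing finitely many points from an interval leaves its measure unchanged. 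With that fix in place, your arguments for properties~v and~vii go through exactly as you outline.
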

\begin{proof}
The proof mostly follows the proof of \cref{thm:props_kc}, 
now using \cref{thm:ST_kd,thm:zero_kd} instead of \cref{thm:MN_kc,thm:zero_kc}.
The notable differences are as follows.
The second property requires the exclusion of any imaginary eigenvalues of 
$(S,T_\theta)$ that are also in the interval $[0,\imagunit]$, per the definition
of $\shfn(\theta)$ given in \eqref{eq:dist_kd}.
This key change keeps $\shfn(\theta)$ strictly positive 
whenever $(S,T_\theta)$ has one or more eigenvalues on the imaginary axis in $[0,\imagunit]$ but not in $(\imagunit,\infty)$.
The continuity property is unaffected by this exclusion but does require 
our assumption that $\gamma^2 \not\in \Lambda(AA^*)$,
which by \cref{thm:zero_kd} guarantees that zero is never an eigenvalue of $(S,T_\theta)$ for any $\theta \in \R$.
For the properties of $\shset(\gamma)$, the main differences to note are that $\lim_{r\to \infty} h(r,\theta) = 1$ for any $\theta$,
while $\lim_{r\to 1^+} h(r,\theta) = \infty$ for almost all $\theta$.  
This second limit can only be finite for at most $n$ values of $\theta \in \mathcal{D}$,
namely at angles corresponding to unimodular eigenvalues of $A$.
\end{proof}

In Figure~\ref{fig:kinv_disc_ex}, we show plots of $\shfn(\theta)$ for different values of 
$\gamma$ for the $10 \times 10$ discrete-time example used in \cite[Section~8]{Mit19},
namely $A = \tfrac{1}{13}B + \tfrac{11}{10}I$, where $B = \texttt{convdiff\_demo(11)}$ from EigTool.
As this matrix is real, the level sets of $h(r,\theta)$ are symmetric with respect to the real axis,
and so only the upper half of the complex plane is shown.

\def\ddfull{$\shfn(\theta)$ on $\left[0,\pi\right]$}
\def\ddzoom{$\shfn(\theta)$ on $\left[0,\tfrac{3}{8}\pi\right]$}

\begin{figure}[!t]
\centering
\subfloat[Level sets]{
\includegraphics[scale=\imgscale,trim={0.9cm 0.05cm 1.5cm 0.8cm},clip]{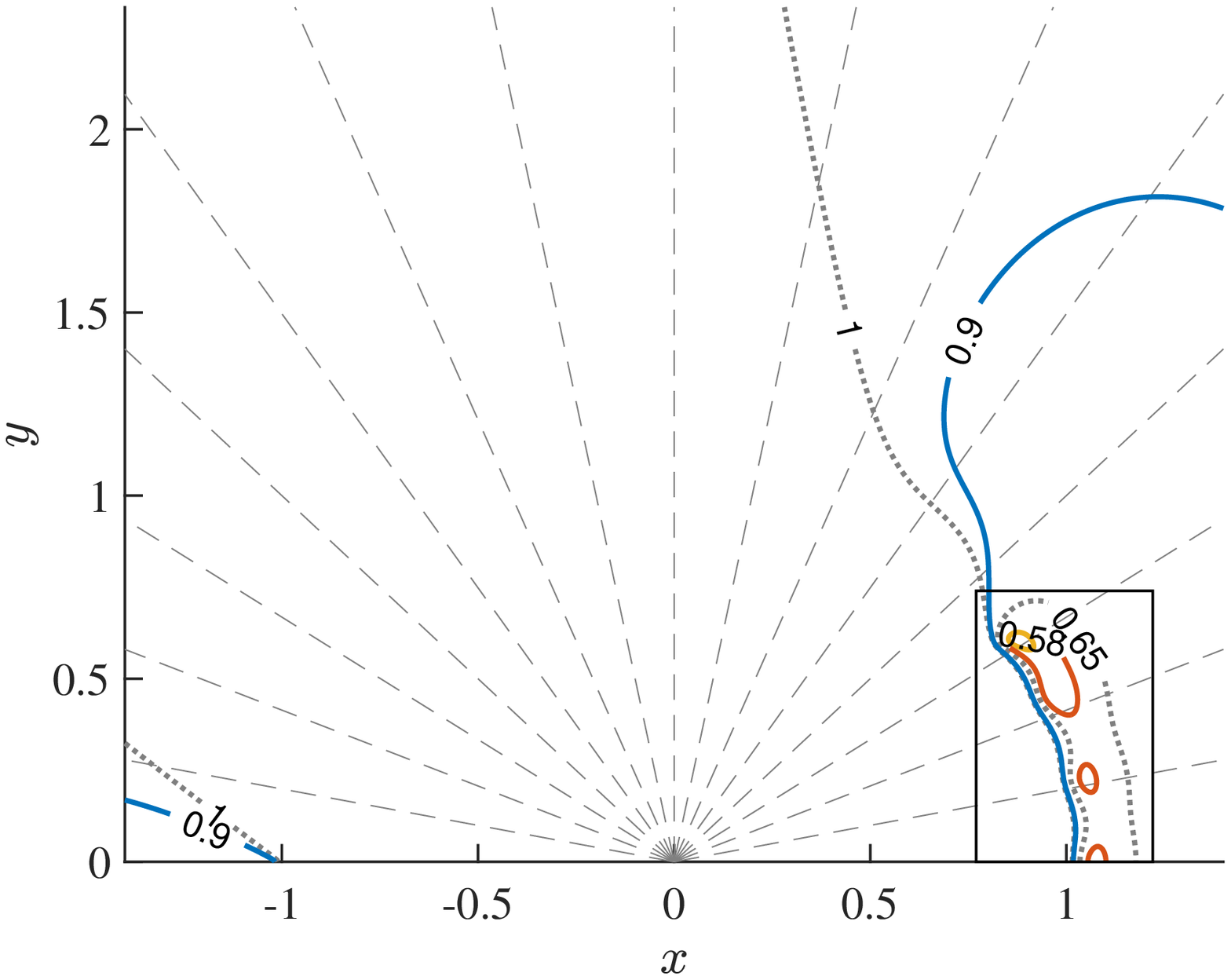} 
\label{fig:kdl}
} 
\subfloat[\ddfull]{
\includegraphics[scale=\imgscale,trim={0.3cm 0.05cm 1.5cm 0.8cm},clip]{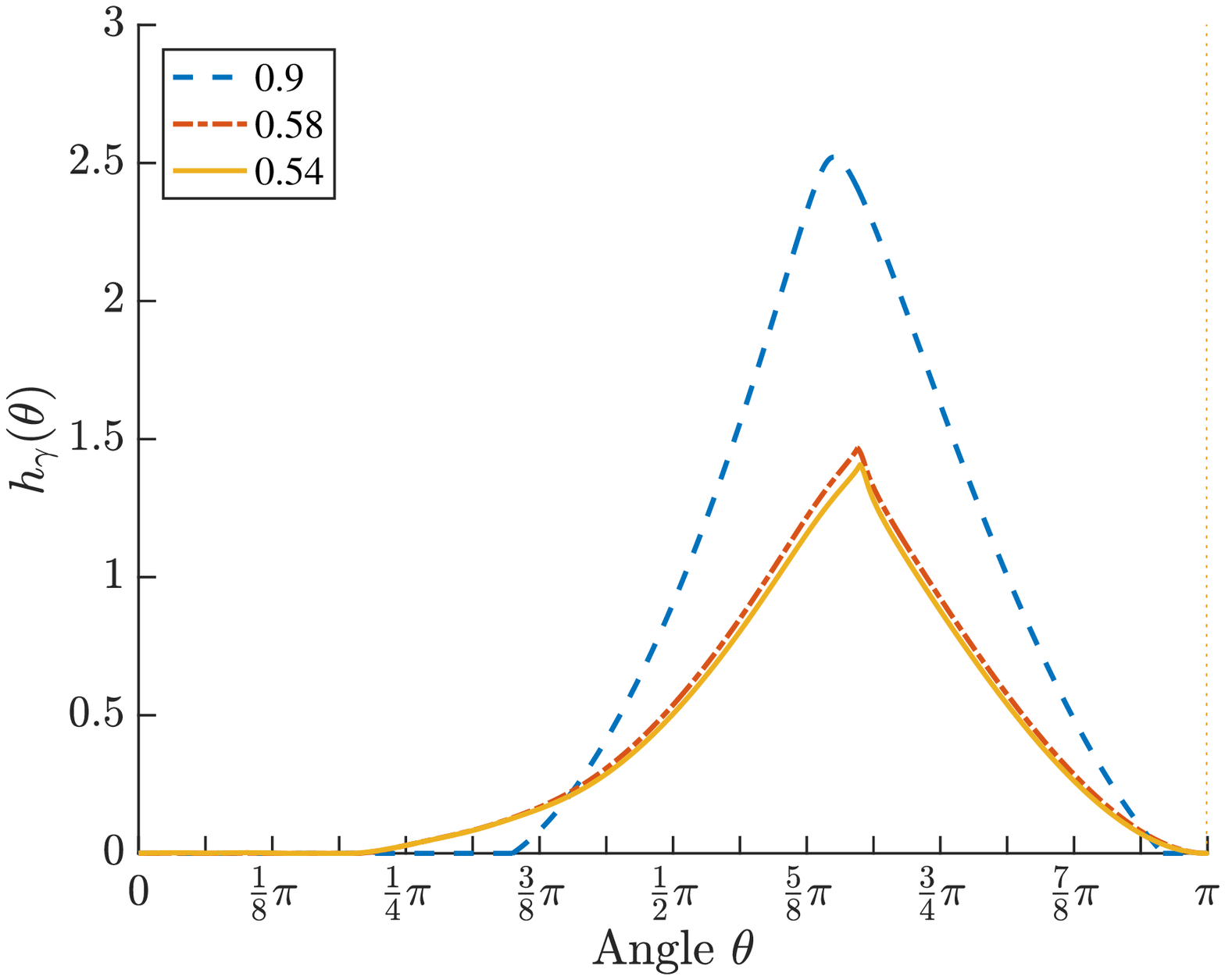} 
\label{fig:kdd}
}
\\
\subfloat[Level sets (enlarged view)]{
\includegraphics[scale=\imgscale,trim={0.9cm 0.05cm 1.3cm 0.3cm},clip]{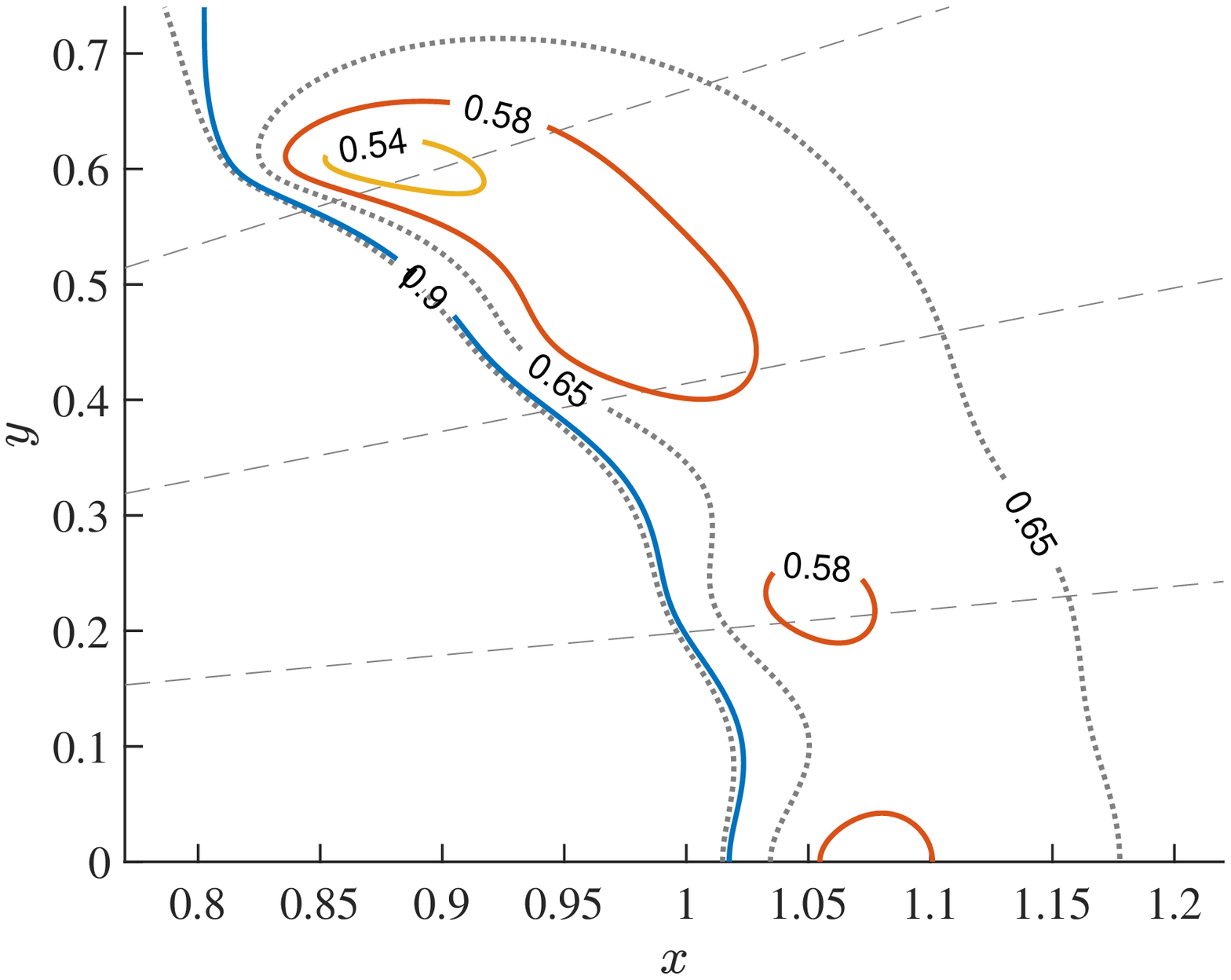} 
\label{fig:kdl_zoom}
}
\subfloat[\ddzoom]{
\includegraphics[scale=\imgscale,trim={0.3cm 0.05cm 1.5cm 0.3cm},clip]{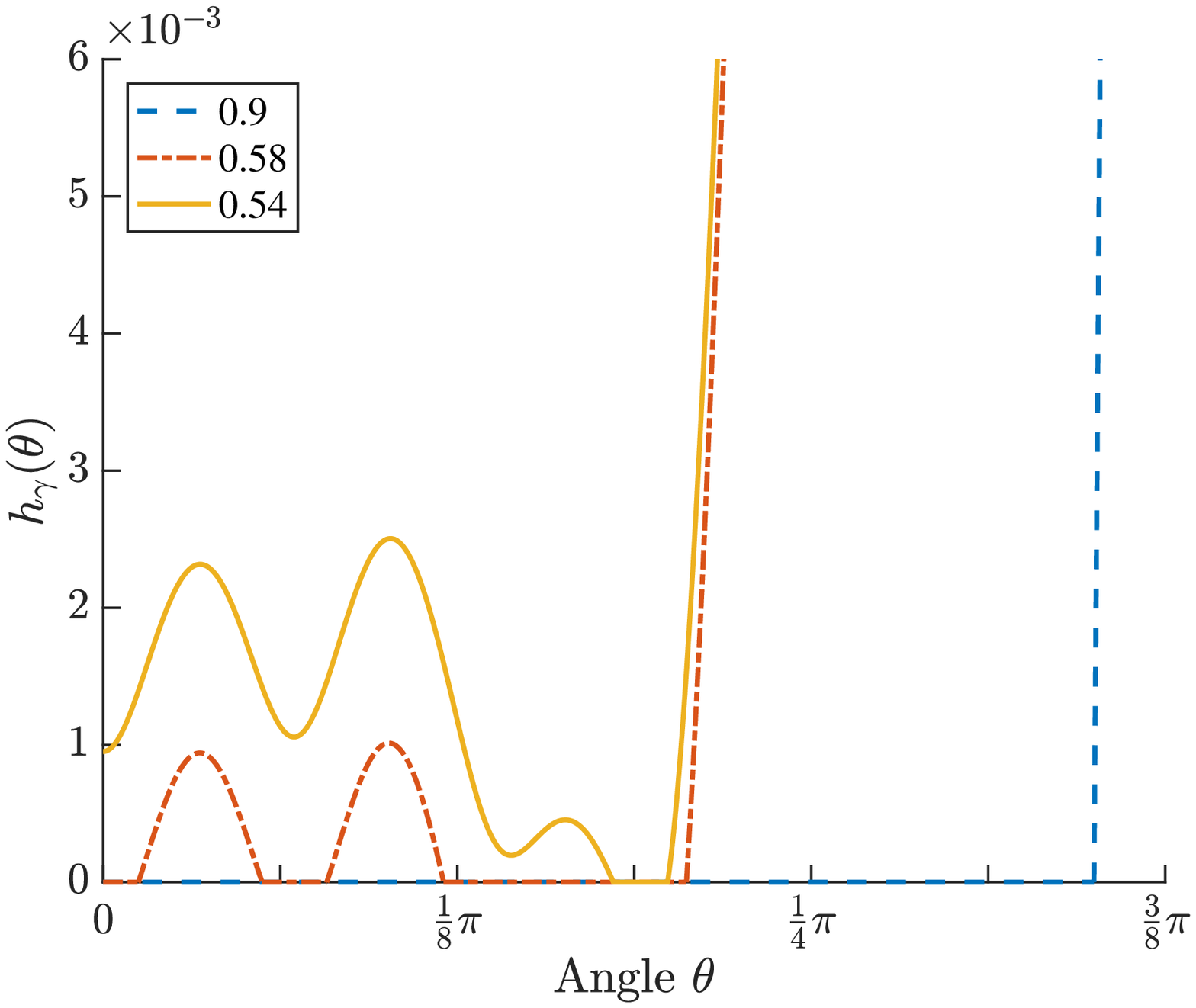} 
\label{fig:kdd_zoom}
}
\caption{The top left pane shows a contour plot of the level sets (now in linear scale) of the 
objective function in \eqref{eq:kinv_disc} for a discrete-time example, with $z = x+\imagunit y$.
As this matrix is real, only the upper half of the complex plane is shown.
The global minimizer of  \eqref{eq:kinv_disc} lies in the boxed area, in the well near the top left corner;
an enlarged view of this region is shown in the bottom left pane.
Contours are shown for $\gamma=1$ (dotted), $\gamma = 0.9$ (solid), $\gamma = 0.65$ (dotted), 
$\gamma = 0.58$ (solid), and $\gamma = 0.54$ (solid, not visible at top left).
For each of the solid contours, 
the corresponding $\shfn(\theta)$ function is plotted in the right panes,
for the respective regions shown in the left panes.
For each angle tick mark in the right panes, the corresponding ray from the origin is shown as a dashed line in the left panes.
The correspondence between the $\gamma$-level set and where $\shfn(\theta)=0$ 
is clearly evident.  In the top right pane, the discontinuity in $\shfn(\theta)$ 
for $\gamma = 0.54$ near $\theta = \pi$ is due 
to excluding eigenvalues of $(S,T_\theta)$ that lie inside the eccentric ellipse $\delta^{-2}x^2 + y^2 = 1$ with $\delta=10^{-8}$.
}
\label{fig:kinv_disc_ex}
\end{figure}

The removal of eigenvalues from $\Lambda(S,T_\theta)$ on the imaginary axis in the interval $[0,\imagunit]$ 
for $\shfn(\theta)$ requires further comment.
In fact, any eigenvalue of $(S,T_\theta)$
inside the closed unit disk is irrelevant, but per \cref{rem:min_sv}, \cref{thm:ST_kd} may detect 
level-set points of $\sigma_k(zI - A)/|(|z| - 1)|$, for any $k = 1,\ldots,n$, where $\sigma_k(\cdot)$
is the $k$th largest singular value.
However, excluding all eigenvalues in the unit disk could introduce discontinuities, as  $\Arg(\cdot)$ may jump
if an eigenvalue of $(S,T_\theta)$ enters or exits the unit disk.
Instead, by only excluding those in $[0,\imagunit]$, continuity is preserved, and 
while $\shfn(\theta)$ may become infinitesimal as
eigenvalues inside the unit disk may be arbitrarily close to $[0,\imagunit]$, they cannot 
introduce a zero of $\shfn(\theta)$ precisely since $[0,\imagunit]$ itself is excluded.
In practice, using a structure-preserving eigensolver for $(S,T_\theta)$ means that computed
eigenvalues on the imaginary axis will be exactly imaginary, and so removing any that are in $[0,\imagunit]$ can be done exactly.
When a structure-preserving eigensolver is not used, 
two other measures can help deal with rounding errors in the real parts of computed eigenvalues.
First, in lines~13 and 23 for the adaptation of \cref{alg:interp} (see \cref{sec:alg_disc} for the complete details), we can restrict to eigenvalues $\imagunit r$ of $(S,T_\theta)$ with $r > 1$ so that optimization is restarted only if \emph{feasible} nonstationary
points are found; for $\shfn(\theta)$, this crucial specification ensures that
$\shfn(\theta)$ is increasingly better approximated by interpolant $p_\gamma(\theta)$ until either $p_\gamma(\theta) \approx \shfn(\theta)$ or a detected zero of $\shfn(\theta)$ \emph{also} leads to detection of a level-set point outside the unit disk.
Second, to help ensure that $\shfn(\theta) = 0$ only if $r\eit$ with $r > 1$ is a level-set point, 
we can discard any eigenvalue of $(S,T_\theta)$ that lies inside
the eccentric ellipse defined by $\delta^{-2}x^2 + y^2 = 1$ for some small $\delta > 0$.  
Technically, this may still introduce discontinuities in $\shfn(\theta)$,
but they are much less likely to occur than when excluding the entire unit disk ($\delta = 1$).
Note that such a discontinuity can be seen in Figure~\ref{fig:kinv_disc_ex}; see the caption for details.

\subsection{Adapting \cref{alg:interp} for discrete-time $\Kcon$}
\label{sec:alg_disc}
The following modifications of \cref{alg:interp} are needed for 
computing discrete-time $\Kcon$.
For input, it is assumed that $A$ is nonnormal with $\rho(A) \leq 1$ and $z_0 \in \C$ with $|z_0| > 1$.
While $\shfn(\theta)$ requires that $\gamma^2 \not\in \Lambda(AA^*)$, per \cref{thm:zero_kd},
this is a very mild assumption; 
clearly there only up to $n$ values of $\gamma \geq 0$ such that $\gamma^2 \in \Lambda(AA^*)$, and in 
the unlikely case that one of these is encountered,  
simply perturbing $\gamma$ by a slight amount would suffice.
In lines 1--3, $\mathcal{D}$ should be initially set to $(-\pi,\pi]$ and reduced to $[0,\pi]$
if $A$ is real. 
Throughout the pseudocode and accompanying note, \eqref{eq:kinv_cont} and $\sgfn(\theta)$
should be replaced by \eqref{eq:kinv_disc} and $\shfn(\theta)$, respectively,
and $0.5(\theta_l + (\theta_1 + 2\pi))$ should also be included
when doing the additional check described in the note.
As alluded to earlier, in lines 13 and 23, 
``$\imagunit r \in \Lambda(M,N_{\theta_j})$ defined in \eqref{eq:eigMN_kc} with $r > 0$"
should be replaced with 
``$\imagunit r \in \Lambda(S,T_{\theta_j})$  defined in \eqref{eq:eigST_kd} with $r > 1$".
For increased efficiency, 
$\shfn(\theta)$ should be evaluated in analogous manner as described in \S\ref{sec:fast_eval}
for $\sgfn(\theta)$ but in this case, it is only necessary to consider recomputing the eigenvalues 
of $T_\theta^{-1}S$ via its matrix pencil form when a computed eigenvalue is within a distance 
\texttt{tol} of the interval $[\imagunit,\infty)$ on the imaginary axis (as opposed to the interval $[0,\infty)$).
These modifications for discrete-time $\Kcon$ do not alter 
the $\bigO(kn^3)$ work complexity and $\bigO(n^2)$ memory characteristics.

\section{A new approach for computing the distance to uncontrollability}
\label{sec:dtu}
We now turn to adapting our new globality certificates for $\dtu$, i.e.,
to find global minimizers of \eqref{eq:dtu}.
For the optimization phases, local minimizers of \eqref{eq:dtu} 
should be found using Cartesian coordinates, with a quasi-Newton method or Newton's method for fast local convergence; 
the gradient of  \eqref{eq:dtu} can be found in \cite[p.~358]{BurLO04}, while the corresponding Hessian can 
be obtained via a straightforward modification to the derivation of the Hessian of \eqref{eq:kinv_cont} in \cite[Section~3.1]{Mit19}.
For our interpolation-based globality certificate for $\dtu$, we again use polar coordinates:
\beq
	\label{eq:f_fns}
	f(r,\theta) \coloneqq \smin(F(r,\theta)) 
	\quad \text{and} \quad 
	F(r,\theta) \coloneqq \begin{bmatrix} A - r\eit I & B \end{bmatrix},
\eeq
so 
\[ 
	\dtu = \inf_{r > 0, \, \theta \in (-\pi,\pi]} \, f(r,\theta).
\]
Our upcoming $\dtu$ analogue of $\sgfn(\theta)$ and $\shfn(\theta)$ requires that 
$\gamma^2 \not\in \Lambda(AA^* + BB^*)$, a condition which can be easily ensured, as we will explain.

\subsection{Level sets of $f(r,\theta)$ and another 1D radial level-set test}
We again begin with a result enabling a radial 1D level-set test.
Our following result is essentially a modified version of the result derived in \cite[eq.~(2.4)]{Gu00}.
As noted there, similar results were also previously developed in \cite[Theorem~3.1]{Bye90a} and \cite[Lemmas~2.1 and 2.2]{GaoN93}.  
The key difference here, besides some simplifications, 
is that we derive an sHH matrix pencil 
for the desirable imaginary-axis symmetry of its eigenvalues.  The proof is also a bit different.

\begin{theorem}
\label{thm:CD_dtu}
Let $A \in \C^{n\times n}$ and $\gamma, r, \theta \in \R$ with $\gamma \neq 0$.
Then $\gamma > 0$ is a singular value of $F(r,\theta)$ defined in \eqref{eq:f_fns} if and only if $\imagunit r$ is an eigenvalue of the 
regular skew-Hamiltonian-Hamiltonian 
matrix pencil $(C,D_\theta)$, where
\beq
	\label{eq:eigCD_dtu}
	C \coloneqq
	\begin{bmatrix}
		A   			& \widetilde B \\
		\gamma I 		& -A^*  		
	\end{bmatrix}
	\quad \text{and} \quad
	D_\theta \coloneqq
	\begin{bmatrix}
		-\imagunit \eit I 	& 0 \\
		0			& \imagunit \emit I 
	\end{bmatrix},
\eeq
$\widetilde B \coloneqq \tfrac{1}{\gamma}BB^* - \gamma I$, and $D_\theta$ is always nonsingular.
\end{theorem}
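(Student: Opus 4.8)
The plan is to follow the template of the proofs of \cref{thm:MN_kc,thm:ST_kd}: write down the singular-value equations for the rectangular matrix $F(r,\theta)$, eliminate the trailing block of the right singular vector (the one that $B$ acts on), and reorganize what remains into a generalized eigenvalue equation of the form $C w = \imagunit r\, D_\theta w$. The only new wrinkle relative to the earlier theorems is that $F$ is not square, so the left and right singular vectors live in spaces of different dimensions, and it is precisely the substitution that removes the $B$-block which produces $\widetilde B$. Before the main argument I would dispose of the structural claims. A direct computation gives $\det D_\theta = \bigl((-\imagunit\eit)(\imagunit\emit)\bigr)^n = 1$, so $D_\theta$ is nonsingular for every $\theta$; in particular $\det(C - \lambda D_\theta)$ is a genuine degree-$2n$ polynomial in $\lambda$, hence $(C,D_\theta)$ is a regular matrix pencil. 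With $J = \begin{bsmallmatrix} 0 & I \\ -I & 0 \end{bsmallmatrix}$, computing $JC$ and $JD_\theta$ and using that $\widetilde B = \tfrac{1}{\gamma}BB^* - \gamma I$ is Hermitian (and $\gamma,\theta$ real) shows $JC$ is Hermitian while $JD_\theta$ is skew-Hermitian; hence $C$ is Hamiltonian, $D_\theta$ is skew-Hamiltonian, and $(C,D_\theta)$ is sHH.

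For the ``only if'' direction, suppose $\gamma$ is a singular value of $F \coloneqq F(r,\theta)$ with left singular vector $u \in \C^n$ and right singular vector $v = \begin{bsmallmatrix} v_1 \\ v_2 \end{bsmallmatrix}$, $v_1 \in \C^n$, $v_2 \in \C^m$. Expanding $Fv = \gamma u$ and $F^* u = \gamma v$ blockwise yields $(A - r\eit I)v_1 + Bv_2 = \gamma u$, $(A - r\eit I)^* u = \gamma v_1$, and $B^* u = \gamma v_2$. Since $\gamma \neq 0$, the last identity gives $v_2 = \tfrac{1}{\gamma}B^* u$; substituting into the first and collecting terms rewrites it as $A v_1 + \widetilde B u = r\eit v_1$, while the second rewrites as $\gamma v_1 - A^* u = -r\emit u$. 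Stacking these two identities is precisely $C \begin{bsmallmatrix} v_1 \\ u \end{bsmallmatrix} = r \begin{bsmallmatrix} \eit I & 0 \\ 0 & -\emit I \end{bsmallmatrix}\begin{bsmallmatrix} v_1 \\ u \end{bsmallmatrix}$, and since $-\imagunit \begin{bsmallmatrix} \eit I & 0 \\ 0 & -\emit I \end{bsmallmatrix} = D_\theta$, this is $C \begin{bsmallmatrix} v_1 \\ u \end{bsmallmatrix} = \imagunit r\, D_\theta \begin{bsmallmatrix} v_1 \\ u \end{bsmallmatrix}$; as $u \neq 0$ the stacked vector is a nonzero eigenvector, so $\imagunit r \in \Lambda(C,D_\theta)$.

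For the ``if'' direction, let $\begin{bsmallmatrix} x \\ y \end{bsmallmatrix} \neq 0$ satisfy $C \begin{bsmallmatrix} x \\ y \end{bsmallmatrix} = \imagunit r\, D_\theta \begin{bsmallmatrix} x \\ y \end{bsmallmatrix}$, i.e.\ $Ax + \widetilde B y = r\eit x$ and $\gamma x - A^* y = -r\emit y$. If $y = 0$, the second equation forces $\gamma x = 0$, hence $x = 0$ since $\gamma \neq 0$, contradicting $\begin{bsmallmatrix} x \\ y \end{bsmallmatrix} \neq 0$; thus $y \neq 0$. Set $u \coloneqq y$, $v_1 \coloneqq x$, $v_2 \coloneqq \tfrac{1}{\gamma}B^* y$, and $v \coloneqq \begin{bsmallmatrix} v_1 \\ v_2 \end{bsmallmatrix}$. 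Using $\widetilde B = \tfrac{1}{\gamma}BB^* - \gamma I$, the first equation gives $Fv = (A - r\eit I)x + \tfrac{1}{\gamma}BB^* y = -\widetilde B y + \tfrac{1}{\gamma}BB^* y = \gamma y = \gamma u$; the second rearranges to $(A - r\eit I)^* y = \gamma x$, which together with $B^* y = \gamma v_2$ gives $F^* u = \gamma v$. Then $F F^* u = \gamma^2 u$ with $u \neq 0$, and since $\gamma > 0$ it follows that $\gamma$ is a singular value of $F(r,\theta)$.

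The main obstacle I anticipate is clerical rather than conceptual: keeping the scalar factors $\eit,\emit$ and the sign flips in the two block rows perfectly consistent while passing between the singular-value system for the rectangular $F$ and the square pencil $(C,D_\theta)$, and pinpointing exactly where $\gamma \neq 0$ is needed (to solve for $v_2$ and to force $y \neq 0$) as opposed to where $\gamma > 0$ is needed (to read off an honest singular value, not merely $|\gamma|$, from $FF^* u = \gamma^2 u$). Everything else reduces to the same singular-value/eigenvalue correspondence already used in \cref{thm:MN_kc,thm:ST_kd}.
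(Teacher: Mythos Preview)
Your proposal is correct and follows essentially the same route as the paper: write the singular-value system for the rectangular $F(r,\theta)$, eliminate $v_2 = \tfrac{1}{\gamma}B^*u$, and reorganize into $C\begin{bsmallmatrix} v_1 \\ u \end{bsmallmatrix} = \imagunit r\, D_\theta \begin{bsmallmatrix} v_1 \\ u \end{bsmallmatrix}$. Your version is in fact more thorough than the paper's, which only writes out the ``only if'' direction and leaves reversibility implicit; you make the converse explicit (including the $y \neq 0$ argument) and cleanly isolate where $\gamma \neq 0$ versus $\gamma > 0$ are actually used.
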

\begin{proof}
Clearly $D_\theta$ is always nonsingular and it easy to verify that it is also skew-Hamiltonian and
$C$ is Hamiltonian; hence, $(C,D_\theta)$ is a regular sHH matrix pencil.
Now suppose $\gamma$ is a singular value of $F(r,\theta)$ with left and right singular vectors 
$u$ and $v = \begin{bsmallmatrix} v_1 \\ v_2 \end{bsmallmatrix}$.
Then the following two equations hold:
\[
	\gamma
	\begin{bmatrix}
		u \\ v
	\end{bmatrix}
	=
	\begin{bmatrix}
		F(r,\theta)  	& 0 \\
		0 			& F(r,\theta)^*
	\end{bmatrix}
	\begin{bmatrix}
		v \\ u
	\end{bmatrix}
	\ \text{and} \
	\gamma
	\begin{bmatrix}
		u \\ v_1 \\ v_2
	\end{bmatrix}
	=
	\begin{bmatrix}
		A - r\eit I  	& B 	& 0 \\
		0 		& 0 	& A^* - r\emit I \\
		0 		& 0 	& B^*
	\end{bmatrix}
	\begin{bmatrix}
		v_1 \\ v_2 \\ u
	\end{bmatrix}.
\]
The last block row yields $v_2 = \tfrac{1}{\gamma}B^* u$,
and so by making this substitution in the equation above, 
we equivalently have 
\[
	\gamma
	\begin{bmatrix}
		u \\ v_1 
	\end{bmatrix}
	=
	\begin{bmatrix}
		A - r\eit I  	& \tfrac{1}{\gamma}BB^*  	& 0 \\
		0 		& 0 					& A^* - r\emit I
	\end{bmatrix}
	\begin{bmatrix}
		v_1 \\ u \\ u
	\end{bmatrix}
	=
	\begin{bmatrix}
		A - r\eit I  	& \tfrac{1}{\gamma}BB^* \\
		0 		& A^* - r\emit I
	\end{bmatrix}
	\begin{bmatrix}
		v_1 \\ u 
	\end{bmatrix}.
\]
Rearranging terms to separate out the terms involving $r$ 
and multiplying the resulting lower block row by $-1$ yields
\[
	\begin{bmatrix}
		A  	& \tfrac{1}{\gamma}BB^* \\
		0 	& -A^*
	\end{bmatrix}
	\begin{bmatrix}
		v_1 \\ u
	\end{bmatrix}
	-
	\gamma
	\begin{bmatrix}
		u \\ -v_1 
	\end{bmatrix}
	=
	\begin{bmatrix}
		A  		& \widetilde B \\
		\gamma I  & -A^*
	\end{bmatrix}
	\begin{bmatrix}
		v_1 \\ u
	\end{bmatrix}
	=
	r
	\begin{bmatrix}
		\eit I 	& 0 \\
		0 	& -\emit I 
	\end{bmatrix}
	\begin{bmatrix}
		v_1 \\ u 
	\end{bmatrix}.
\]
Noting that the matrix on the right multiplied by $-\imagunit$ is $D_\theta$ 
completes the proof.
\end{proof}

Remark~\ref{rem:min_sv}, with appropriate substitutions, 
also applies to \cref{thm:CD_dtu}, $f(r,\theta)$, and $F(r,\theta)$.
While \cref{thm:CD_dtu} can also be used to show that the $\gamma$-level
set of $f(r,\theta)$ is compact for any finite $\gamma$, this is not necessary 
as it is already well known; see \cite[p.~353]{BurLO04}.
A third argument for this is via \cite[Theorem~2.2]{WriT02} and equivalently 
considering $\smin(\begin{bsmallmatrix} A - zI & B\end{bsmallmatrix}^*)$,
whose lower level sets are rectangular pseudospectra.

\begin{theorem}
\label{thm:bounded_dtu}
Let $A \in \C^{n\times n}$, $B \in \C^{n \times m}$, and $\gamma \geq 0$. 
The $\gamma$-level set of $f(r,\theta)$ defined in \eqref{eq:f_fns} is compact.
\end{theorem}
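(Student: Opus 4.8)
The plan is to proceed exactly as in the proof of \cref{thm:bounded_cont}, but using \cref{thm:CD_dtu} in place of \cref{thm:MN_kc}, and to observe that the situation is actually simpler here because $D_\theta$ is \emph{always} nonsingular (no restriction on $\gamma$) and because it is already known that the $\gamma$-level set of $f(r,\theta)$ is closed. First I would take an arbitrary point $\tilde r \eit$ in the $\gamma$-level set, so that $\gamma$ is a singular value of $F(\tilde r, \theta)$; if $\gamma = 0$ one handles this trivially (or notes it forces $\tilde r$ to an eigenvalue configuration), and for $\gamma > 0$ \cref{thm:CD_dtu} gives $\imagunit \tilde r \in \Lambda(C, D_\theta)$. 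Since $D_\theta$ is nonsingular for every $\theta$, all eigenvalues of $(C,D_\theta)$ are finite, so the function $m(\theta) \coloneqq \rho(C,D_\theta)$ is finite-valued; by continuity of the spectral radius it attains a finite maximum $m_\star$ on the compact set $[-\tfrac{\pi}{2},\tfrac{\pi}{2}]$ (it suffices to bound $|\tilde r|$, and since $\tilde r \eit = \tilde r \e^{\imagunit(\theta+\pi)}$ we may restrict the angle to a half-circle). Then $|\imagunit \tilde r| = |\tilde r| \le m_\star$, so every point of the $\gamma$-level set lies in the closed disk of radius $m_\star$, which gives boundedness.

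For compactness I would then simply invoke closedness of the $\gamma$-level set, which the excerpt already cites as well known (\cite[p.~353]{BurLO04}, and also via \cite[Theorem~2.2]{WriT02} on rectangular pseudospectra); a bounded closed subset of $\R^2$ (equivalently $\C$) is compact. Alternatively, closedness follows directly from continuity of $f(r,\theta)$, since the $\gamma$-level set $\{(r,\theta): f(r,\theta)=\gamma\}$ is the preimage of a closed set — though one must be slightly careful with the polar parametrization near $r=0$, which is why leaning on the established Cartesian statement is cleaner.

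The only genuine obstacle — and it is minor — is the bookkeeping around the polar coordinates: as $r \to 0^+$ or as $\theta$ ranges over $(-\pi,\pi]$, one must make sure the "level set" is being treated as a subset of $\C$ (so that $(r,\theta)$ and $(r,\theta+\pi)$-type redundancies, and the origin, are handled consistently) rather than as a subset of the $(r,\theta)$-plane. Once that is fixed, the argument is entirely parallel to \cref{thm:bounded_cont} and in fact shorter, since here there is no case distinction on $|\gamma \costh| = 1$ and no separate "$\alpha(A)<0$" hypothesis needed for compactness — the claim holds for every finite $\gamma \ge 0$ because the level set is automatically both closed (known) and bounded (via \cref{thm:CD_dtu} and continuity of the spectral radius).
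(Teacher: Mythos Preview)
Your proof is correct and in fact more self-contained than what the paper offers: the paper does not prove this theorem at all but simply cites it as well known from \cite[p.~353]{BurLO04}, with an additional pointer to rectangular pseudospectra via \cite[Theorem~2.2]{WriT02}. The paper does remark that \cref{thm:CD_dtu} ``can also be used'' in the manner of \cref{thm:bounded_cont}, which is precisely your route, so you have filled in the details the paper chose to omit. One minor slip: the identity $\tilde r\eit = \tilde r\e^{\imagunit(\theta+\pi)}$ is false (the right-hand side is $-\tilde r\eit$), so your reduction to a half-circle is not justified as written --- but it is also unnecessary, since $m(\theta)=\rho(C,D_\theta)$ is continuous on the compact interval $[-\pi,\pi]$ and hence attains a finite maximum there without any such reduction.
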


While in the Kreiss constant setting it is clear when the level sets of $g(r,\theta)$ and $h(r,\theta)$
are symmetric with respect to the real axis, the symmetry conditions for $f(r,\theta)$
are slightly more nuanced.

\begin{theorem}
\label{thm:dtu_symmetry}
Let $A \in \C^{\n \times \n}$, $B \in \C^{\n \times m}$, and let $\sigma_k(\cdot)$ 
denote the $k$th singular value. 
Then 
$
	\sigma_k \left( \begin{bmatrix} A - \lambda I & B \end{bmatrix} \right) 
	= 
	\sigma_k \left( \begin{bmatrix} A - \overline \lambda I & B \end{bmatrix} \right)
$
if either \emph{(i)} $A$~and~$B$ are both real-valued matrices or \emph{(ii)} $A$ is Hermitian.
\end{theorem}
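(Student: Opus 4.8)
The plan is to work with the Gram matrix $M M^*$ rather than $M$ itself, exploiting that the squared singular values $\sigma_k(M)^2$ are exactly the (ordered) eigenvalues of $M M^*$. So it suffices to show, in each case, that $M M^*$ for $M = \begin{bsmallmatrix} A - \lambda I & B \end{bsmallmatrix}$ coincides with $N N^*$ for $N = \begin{bsmallmatrix} A - \overline\lambda I & B \end{bsmallmatrix}$, since equal Hermitian positive semidefinite matrices trivially have equal eigenvalues.

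For case (i), when $A$ and $B$ are real, I would simply observe that $N = \overline{M}$, so that $N N^* = \overline{M}\,\overline{M}^* = \overline{M M^*}$. Since $M M^*$ is Hermitian, its eigenvalues are real, and conjugation leaves the real eigenvalues fixed; hence $\sigma_k(N) = \sigma_k(M)$ for every $k$. This part is essentially immediate and requires no computation.

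For case (ii), when $A^* = A$, the key computation is to expand
\[
	M M^* = (A - \lambda I)(A - \lambda I)^* + B B^* = (A - \lambda I)(A - \overline\lambda I) + B B^* = A^2 - 2(\Re \lambda) A + |\lambda|^2 I + B B^*,
\]
where I used $A^* = A$ to get the second equality and then multiplied out the product. The point is that the result depends on $\lambda$ only through $\Re \lambda$ and $|\lambda|$. Carrying out the same computation for $N N^* = (A - \overline\lambda I)(A - \lambda I) + B B^*$ gives the identical expression, since $\Re \overline\lambda = \Re \lambda$ and $|\overline\lambda| = |\lambda|$; thus $M M^* = N N^*$ and the singular values agree.

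There is no real obstacle here; the only thing worth flagging is that the naive route — trying to produce a unitary $U$ with $N = U M$ directly — is awkward because the $B$ block is untouched, whereas the Gram-matrix viewpoint makes the cancellation of the Hermitian cross terms transparent. The one place to be careful is that case (ii) genuinely needs $A^* = A$: without it, the cross terms $-\lambda A^* - \overline\lambda A$ do not collapse to a function of $\Re\lambda$ alone.
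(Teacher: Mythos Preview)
Your proposal is correct and follows essentially the same route as the paper: both arguments handle case~(i) via invariance of singular values under conjugation (you route this through $NN^* = \overline{MM^*}$, the paper states it directly), and for case~(ii) both compute the Gram matrix $MM^* = AA^* - \lambda A^* - \overline\lambda A + |\lambda|^2 I + BB^*$ and observe that when $A = A^*$ this expression is invariant under $\lambda \leftrightarrow \overline\lambda$.
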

\begin{proof}
Case (i) holds since conjugation does not change singular values, i.e.,
\[
	\sigma_k \left( \begin{bmatrix} A - \lambda I & B \end{bmatrix} \right) 
	= 
	\sigma_k \left( \begin{bmatrix} \, \overline A - \overline \lambda I & \overline B \, \, \end{bmatrix} \right) 
	=
	\sigma_k \left( \begin{bmatrix} A - \overline \lambda I & B \end{bmatrix} \right),
\]
where the middle equivalence uses the fact that $A = \overline A$ and $B = \overline B$ since both are real.
Case (ii) follows from the equivalence $\sigma_k(M) \Longleftrightarrow \sigma_k^2 \in \Lambda(MM^*)$:
\begin{align*}
	\sigma_k \left( \begin{bmatrix} A - \lambda I & B \end{bmatrix} \right) 
	\quad &\Longleftrightarrow \quad 
	\sigma_k^2 \in \Lambda \left( \begin{bmatrix} A - \lambda I & B \end{bmatrix} 
				\begin{bmatrix} A^* - \overline \lambda I \\ B^* \end{bmatrix} \right) \\
	&\Longleftrightarrow \quad 
	\sigma_k^2 \in \Lambda \left( \begin{bmatrix} AA^* - \lambda A^* - \overline \lambda A + |\lambda|^2 I + BB^* \end{bmatrix} \right) \\
	&\Longleftrightarrow \quad 
	\sigma_k^2 \in \Lambda \left( \begin{bmatrix} AA^* - \lambda A - \overline \lambda A^* + |\lambda|^2 I + BB^* \end{bmatrix} \right) \\
	 &\Longleftrightarrow \quad 
	\sigma_k^2 \in \Lambda \left( \begin{bmatrix} A - \overline \lambda I & B \end{bmatrix} 
				\begin{bmatrix} A^* - \lambda I \\ B^* \end{bmatrix} \right) \\
	&\Longleftrightarrow \quad 
	\sigma_k \left( \begin{bmatrix} A - \overline \lambda I & B \end{bmatrix} \right).
\end{align*}
where the third line uses the assumption that $A = A^*$.
\end{proof}

We now consider the conditions under which zero is an eigenvalue of $(C,D_\theta)$,
since our interpolation-based globality certificates require excluding this possibility.

\begin{theorem}
\label{thm:zero_dtu}
Let $A \in \C^{n\times n}$, $B \in \C^{\n \times m}$, and $\gamma, \theta \in \R$ with $\gamma \neq 0$.
Then the matrix pencil $(C,D_\theta)$ defined by \eqref{eq:eigCD_dtu} has zero as an eigenvalue 
if and only if the matrix $AA^* + BB^*$ has $\gamma^2$ as an eigenvalue.
\end{theorem}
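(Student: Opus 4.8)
The plan is to mirror the argument already used for \cref{thm:zero_kd}. First I would observe that since $D_\theta$ is always nonsingular (as established in \cref{thm:CD_dtu}), zero is an eigenvalue of the pencil $(C,D_\theta)$ if and only if there is a nonzero $v$ with $Cv = 0\cdot D_\theta v = 0$, i.e., if and only if $C$ is singular. So the whole statement reduces to showing $\det(C) = 0$ exactly when $\gamma^2 \in \Lambda(AA^* + BB^*)$.

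Next I would compute $\det(C)$ directly by exploiting the block structure. The two lower blocks of $C$ are $\gamma I$ and $-A^*$, and these commute (the first is a scalar multiple of the identity), so the standard block-determinant identity $\det\begin{bsmallmatrix} P & Q \\ R & T\end{bsmallmatrix} = \det(PT - QR)$ applies, giving $\det(C) = \det\bigl(A(-A^*) - \widetilde B(\gamma I)\bigr) = \det(-AA^* - \gamma\widetilde B)$. Substituting the definition $\widetilde B = \tfrac1\gamma BB^* - \gamma I$ and simplifying yields $-AA^* - \gamma\widetilde B = \gamma^2 I - (AA^* + BB^*)$, hence $\det(C) = \det\bigl(\gamma^2 I - (AA^* + BB^*)\bigr)$. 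From here the equivalence is immediate: this determinant vanishes precisely when $\gamma^2$ is an eigenvalue of the (Hermitian positive semidefinite) matrix $AA^* + BB^*$.

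I do not expect a genuine obstacle; the proof is essentially a one-line determinant computation. The only points needing a touch of care are invoking the commuting-blocks determinant identity correctly and verifying that the scalar $\gamma$ introduced through $\widetilde B$ cancels cleanly, which it does since $\gamma \neq 0$ is assumed throughout. (If one prefers to avoid the commuting-blocks identity, the same conclusion follows from a Schur complement taken with respect to the invertible block $\gamma I$, which produces $\det(C) = (-1)^n\gamma^n\det\bigl(\widetilde B + \tfrac1\gamma AA^*\bigr) = (-1)^n\det\bigl(AA^* + BB^* - \gamma^2 I\bigr)$.)
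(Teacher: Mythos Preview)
Your argument is correct and matches the paper's proof essentially line for line: both reduce the question to the singularity of $C$, invoke the block-determinant identity using that the lower blocks $\gamma I$ and $-A^*$ commute, and then substitute the definition of $\widetilde B$ to obtain $\det(C) = \det(\gamma^2 I - AA^* - BB^*)$. Your additional remarks (the explicit justification via nonsingularity of $D_\theta$ and the Schur-complement alternative) are sound but go slightly beyond what the paper writes.
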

\begin{proof}
Since the blocks of $C$ are all square matrices of the same size and 
the lower two blocks $\gamma I$ and $-A^*$ commute,
the if-and-only-if equivalence holds because
\begin{align*}
	\det(C) = \det(-AA^* - \gamma\widetilde B) 
	&= \det(-AA^* - \gamma(\tfrac{1}{\gamma}BB^* - \gamma I))\\
	&= \det(-AA^* - BB^* + \gamma^2 I)).
\end{align*}
\end{proof}

As the next result states, it is clear that $D_\theta^{-1}C$ can be used to 
compute the eigenvalues of $(C,D_\theta)$ when forgoing structure-preserving eigensolvers.

\begin{theorem}
Let $A \in \C^{n\times n}$, $B \in \C^{\n \times m}$, and $\gamma, \theta \in \R$.
The condition number of $D_\theta$, $\kappa(D_\theta)$, equals one for any $\theta$, and 
the spectrum of matrix pencil $(C,D_\theta)$ defined by \eqref{eq:eigCD_dtu} 
is equal to the spectrum of 
\beq
	\label{eq:eigM_dtu}
	C_\theta \coloneqq D_\theta^{-1}C =
	\imagunit
	\begin{bmatrix}
	\emit A 			& \emit \widetilde B \\
	-\gamma \eit I 		& \eit A^*
	\end{bmatrix}.
\eeq
\end{theorem}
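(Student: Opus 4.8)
The plan is to dispatch the two assertions separately, since each reduces to a short direct computation and neither interacts with the other.

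\textbf{The condition number.} First I would note that $D_\theta$ from \eqref{eq:eigCD_dtu} is block diagonal with diagonal blocks $-\imagunit\eit I$ and $\imagunit\emit I$, and that $\imagunit\emit = \overline{-\imagunit\eit}$. Hence $D_\theta$ has exactly the structure $\begin{bsmallmatrix} aI & bI \\ \overline{b}I & \overline{a}I\end{bsmallmatrix}$ covered by \cref{lem:condnum}, with $a = -\imagunit\eit$ (so $|a|=1$) and $b = 0$. Applying \cref{lem:condnum} gives $\kappa(D_\theta) = \tfrac{|a|+|b|}{||a|-|b||} = \tfrac{1}{1} = 1$ for every $\theta$. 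Equivalently, one may simply observe that $D_\theta$ is unitary, as each of its diagonal blocks is a unimodular scalar times the identity, so all singular values of $D_\theta$ equal one.

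\textbf{The spectrum.} By \cref{thm:CD_dtu}, $D_\theta$ is nonsingular for every $\theta$, so the pencil eigenproblem collapses to a standard one: $\Lambda(C,D_\theta) = \Lambda(D_\theta^{-1}C)$. It then remains only to exhibit $D_\theta^{-1}C$. Using the block-diagonal form of $D_\theta$ together with the scalar identities $(-\imagunit)^{-1} = \imagunit$ and $\imagunit^{-1} = -\imagunit$, we obtain $D_\theta^{-1} = \begin{bsmallmatrix} \imagunit\emit I & 0 \\ 0 & -\imagunit\eit I\end{bsmallmatrix}$; multiplying this against $C = \begin{bsmallmatrix} A & \widetilde B \\ \gamma I & -A^*\end{bsmallmatrix}$ block-row by block-row yields $\begin{bsmallmatrix} \imagunit\emit A & \imagunit\emit\widetilde B \\ -\imagunit\gamma\eit I & \imagunit\eit A^*\end{bsmallmatrix}$, and factoring $\imagunit$ out of every block produces exactly $C_\theta$ as displayed in \eqref{eq:eigM_dtu}.

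\textbf{Main obstacle.} There is essentially none: both parts are mechanical verifications. The only points demanding a modicum of care are recognizing that the $(2,2)$ block of $D_\theta$ is the complex conjugate of its $(1,1)$ block (so that \cref{lem:condnum} applies with $b = 0$), and getting the reciprocals of the unimodular scalars right when forming $D_\theta^{-1}$. Neither constitutes a genuine difficulty.
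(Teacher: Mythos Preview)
Your proposal is correct and follows essentially the same approach as the paper, which simply notes that everything is an immediate consequence of $D_\theta$ being diagonal with nonzero (in fact unimodular) diagonal entries. Your version is more explicit---invoking \cref{lem:condnum} and carrying out the block multiplication in detail---but the underlying argument is identical.
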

\begin{proof}
The proof is an immediate consequence of the facts that $D_\theta$ is diagonal and all its diagonal entries are nonzero for any $\theta$.
\end{proof}

\subsection{An interpolation-based globality certificate for $f(r,\theta)$}
Unlike for Kreiss constants, there are no domain restrictions for where a minimizer
of \eqref{eq:dtu} may lie, so our $\dtu$ certificate must sweep the entire complex plane with a ray from the origin
to find level-set points.
Moreover, the origin can be immediately tested by simply evaluating $f(0,\theta)$ for any $\theta$.
Thus, given $\gamma > 0$, we construct another continuous function $\sffn : (-\pi,\pi] \mapsto [0,\pi^2]$ similar to $\sgfn(\theta)$:
\bseq
	\label{eq:gamma_dtu}
	\begin{align}
	\label{eq:dist_dtu}
	\sffn(\theta) &\coloneqq
	\min \{ \Arg(-\imagunit \lambda)^2 : \lambda \in \Lambda(C,D_\theta), \Re \lambda \leq 0\}, \\				 
	\label{eq:set_dtu}
	\sfset(\gamma) &\coloneqq
	\interior \{ \theta : \sffn(\theta) = 0, \, \theta \in (-\pi,\pi] \},	
	\end{align}
\eseq
noting that $\Lambda(C,D_\theta)$ always has imaginary-axis symmetry.

\begin{theorem}
\label{thm:props_dtu}
Let $A \in \C^{n \times n}$ and $B \in \C^{n \times m}$.
Then for any $\gamma > 0$ such that $\gamma^2 \not\in \Lambda(AA^* + BB^*)$, 
the function $\sffn(\theta)$ defined in \eqref{eq:dist_dtu} has the following properties:
\begin{enumerate}[label=\roman*.]
\item $\sffn(\theta) \geq 0$ for all $\theta \in \mathcal{D} \coloneqq (-\pi,\pi]$,
\item $\sffn(\theta) = 0$ if and only if there exists $\imagunit r \in \Lambda(C,D_\theta)$ with $r \in \R$ and $r > 0$,
\item $\sffn(\theta)$ is continuous on its entire domain $\mathcal{D}$, 
\item $\sffn(\theta)$ is differentiable at a point $\theta$ if the eigenvalue $\lambda \in \Lambda(C,D_\theta)$ attaining the value of $\sffn(\theta)$ is unique and simple.
\end{enumerate}
Furthermore,  the following properties hold for the set $\sfset(\gamma)$ defined in \eqref{eq:set_dtu}:
\begin{enumerate}[label=\roman*.,resume]
\item if $\dtu < \gamma$, then 
	$0 < \mu(\sfset(\gamma))$,
\item $\gamma_1 \leq \gamma_2$ if and only if $\mu (\sfset(\gamma_1)) \leq \mu (\sfset(\gamma_2))$,
\item if $\gamma > f(0,\theta)$ for any $\theta \in \R$, then $\mu(\sfset(\gamma)) = 2\pi$,
\end{enumerate}
where $\mu(\cdot)$ is the Lebesgue measure on $\R$.
\end{theorem}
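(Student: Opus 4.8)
The plan is to mirror the proof of \cref{thm:props_kc} essentially line for line, with \cref{thm:CD_dtu} and \cref{thm:zero_dtu} taking the roles of \cref{thm:MN_kc} and \cref{thm:zero_kc}, and with the fact that $(C,D_\theta)$ is an sHH pencil — so $\Lambda(C,D_\theta)$ is imaginary-axis symmetric — used exactly as before. Properties~(i)--(iv) then require no new idea. Properties~(i) and~(ii) hold by construction: if $\Re\lambda\le 0$ then $-\imagunit\lambda$ lies in the closed upper half-plane, so $\Arg(-\imagunit\lambda)^2\in[0,\pi^2]$, and $\sffn(\theta)=0$ precisely when $(C,D_\theta)$ has an eigenvalue $\imagunit r$ with $r>0$; the hypothesis $\gamma^2\notin\Lambda(AA^*+BB^*)$ enters via \cref{thm:zero_dtu} to ensure $0\notin\Lambda(C,D_\theta)$, so $\Arg$ is well defined along the spectrum (and, since $\Arg(\cdot)^2$ is continuous across the negative real axis, causes no trouble there). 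Property~(iii) follows from continuity of the eigenvalues — even more directly than in the Kreiss case, because $D_\theta$ is \emph{always} nonsingular by \cref{thm:CD_dtu}, so $\Lambda(C,D_\theta)=\Lambda(C_\theta)$ with $C_\theta$ from \eqref{eq:eigM_dtu} entire in $\theta$ — together with the imaginary-axis symmetry, which guarantees $\sffn$ does not jump when an eigenvalue crosses the imaginary axis. Property~(iv) is first-order perturbation theory for simple eigenvalues, as for $\sgfn(\theta)$.

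The one genuinely new point concerns properties~(v)--(vii), because, unlike $g(r,\theta)$, the function $f(r,\theta)$ need not tend to $\infty$ as $r\to 0^+$: in fact $f(0,\theta)=\smin(\begin{bsmallmatrix} A & B\end{bsmallmatrix})$ is finite and independent of $\theta$. In place of ``$g\to\infty$ as $r\to 0^+$'' I would use ``$f(r,\theta)\to\infty$ as $r\to\infty$'', which holds since $\smin(\begin{bsmallmatrix} A-r\eit I & B\end{bsmallmatrix})\ge\smin(A-r\eit I)\ge r-\|A\|$. Noting that $\sfset(\gamma)$ is open and measurable (being an interior), property~(v) then goes as follows: since $\dtu$ is attained by \eqref{eq:dtu}, say at $z_\star$ with $f(z_\star)=\dtu<\gamma$, the strict sublevel set $\mathcal{L}_\gamma\coloneqq\{z\in\C:f(z)<\gamma\}$ is open and nonempty; pick an open disk $\mathcal{N}\subset\mathcal{L}_\gamma$ and let $\mathcal{T}$ be the positive-measure set of angles of rays from the origin that meet $\mathcal{N}$. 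For $\theta\in\mathcal{T}$, the ray meets $\mathcal{N}$ at some radius $r_0>0$ with $f(r_0,\theta)<\gamma$, so by the intermediate value theorem (using $f(r,\theta)\to\infty$) there is $\tilde r>r_0$ with $f(\tilde r,\theta)=\gamma$; by \cref{thm:CD_dtu}, $\imagunit\tilde r\in\Lambda(C,D_\theta)$, hence $\sffn(\theta)=0$ by property~(ii). Thus $\mathcal{T}\subseteq\sfset(\gamma)$ and $0<\mu(\mathcal{T})\le\mu(\sfset(\gamma))$.

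For property~(vi), I would run the identical monotonicity argument as in \cref{thm:props_kc}, showing $\gamma_1\le\gamma_2$ if and only if $\sfset(\gamma_1)\subseteq\sfset(\gamma_2)$: the forward inclusion continues a boundary point of $\mathcal{L}_{\gamma_1}$ along its ray out to a boundary point of $\mathcal{L}_{\gamma_2}$ (again using $f(r,\theta)\to\infty$), with \cref{rem:min_sv} translating between the zero set of $\sffn$ and the sublevel sets of $f$; the reverse follows by the same contradiction as before, namely that $\theta\in\sfset(\gamma_1)\setminus\sfset(\gamma_2)$ with $\gamma_1\le\gamma_2$ forces $\gamma_2<\min_{r>0}f(r,\theta)\le\gamma_1$. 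The measure inequality in both directions is then immediate. For property~(vii), observe that $\gamma>f(0,\theta)$ for one (hence every) $\theta$ is the same as $\gamma>\smin(\begin{bsmallmatrix} A & B\end{bsmallmatrix})$, which puts $0$ in $\mathcal{L}_\gamma$; then for every $\theta\in(-\pi,\pi]$ the ray from the origin starts (for small $r>0$) inside $\mathcal{L}_\gamma$ and, since $f(r,\theta)\to\infty$, must cross the $\gamma$-level set of $f$ at some $\tilde r>0$, so $\sffn(\theta)=0$ for all $\theta$ and $\mu(\sfset(\gamma))=2\pi$.

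The main obstacle is not a hard estimate but the topology bookkeeping in~(iii) and~(vi): one must verify that, under the restriction $\Re\lambda\le 0$ in \eqref{eq:dist_dtu}, an eigenvalue crossing the imaginary axis does not break continuity of $\sffn$ (this is exactly where the sHH imaginary-axis symmetry is needed), and one must channel the equivalence ``$\sffn(\theta)=0$'' $\Longleftrightarrow$ ``the ray meets a sublevel set of $f$'' through \cref{rem:min_sv}, since $\gamma$ being \emph{a} singular value of $F(\tilde r,\theta)$ need not make it the \emph{smallest}. Because compactness of the level sets and the absence of a zero eigenvalue are already available from \cref{thm:bounded_dtu} and \cref{thm:zero_dtu}, and the only truly new ingredient — swapping ``$r\to 0^+$'' for ``$r\to\infty$'' and using the finite value $f(0,\theta)$ — is elementary, I expect no real difficulty beyond carefully adapting the bookkeeping of the \cref{thm:props_kc} proof.
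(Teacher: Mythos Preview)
Your proposal is correct and follows essentially the same approach as the paper's own proof, which simply states that the argument mirrors \cref{thm:props_kc} with \cref{thm:CD_dtu,thm:zero_dtu} replacing \cref{thm:MN_kc,thm:zero_kc}, and that the set properties use $\lim_{r\to\infty} f(r,\theta)=\infty$ in place of $\lim_{r\to 0^+} g(r,\theta)=\infty$. You have correctly identified this last point as the only substantive adaptation and supplied considerably more detail than the paper does.
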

\begin{proof}
This proof also follows the proof of \cref{thm:props_kc}, 
now using \cref{thm:CD_dtu,thm:zero_dtu} instead of \cref{thm:MN_kc,thm:zero_kc}.
Here the continuity  property of $\sffn(\theta)$ requires our assumption that $\gamma^2 \not\in \Lambda(AA^*+BB^*)$,
which by \cref{thm:zero_dtu} guarantees that zero is never an eigenvalue of $(C,D_\theta)$ for any $\theta \in \R$.
For $\sfset(\gamma)$, the corresponding arguments use
that $f(r,\theta)$ is continuous and $\lim_{r\to \infty} f(r,\theta) = \infty$ for any $\theta$.
\end{proof}

Note that by \cref{thm:zero_dtu,thm:CD_dtu}, 
our assumption that $\gamma^2$ is not an eigenvalue of $AA^* + BB^*$ is equivalent to 
$\gamma$ not being a singular value of $F(0,\theta)$ for any $\theta \in \R$.
As such, the properties of $\sffn(\theta)$ hold as long as $\gamma < f(0,\theta)$.
Since optimization-with-restarts monotonically decreases the value of $\gamma$ until it converges to $\dtu$,
we can easily guarantee that $\gamma^2$ is never an eigenvalue of $AA^* + BB^*$
just by initializing at the origin.  Provided the origin is not a stationary point, optimization 
guarantees finding a point $(\tilde r, \tilde \theta)$ such that $ f(\tilde r,\tilde \theta) < f(0,\theta)$.
Otherwise, either other starting points can be evaluated in order to find a function value
lower than $f(0,\theta)$ or the initial value of $\gamma$ can simply be set to slightly less than $f(0,\theta)$
before commencing the first certification computation.
Note that while $\sffn(\theta)$ is not defined for $\gamma = 0$, 
this is not a problem as there is no need to do a globality check when $f(r,\theta) = 0$, 
as $f(r,\theta)$ is never negative.
Finally, if $f(r,\theta) = f(r,-\theta)$, i.e., the level sets have real-axis symmetry, then the domain
$\mathcal{D}$ can be reduced to $[0,\pi]$.

For brevity, we forgo showing illustrative plots of $\sffn(\theta)$ here, but an example is shown later in Figure~\ref{fig:cert_kahan}.

\subsection{Adapting \cref{alg:interp} for $\dtu$}
We modify \cref{alg:interp} to
compute $\dtu$ as follows.
For input, $A  \in \C^{n \times n}$, $B \in \C^{n\times m}$, 
and $z_0 \in \C$ without restriction.
By also including the origin as an initial point,
ensuring $\gamma^2 \not\in \Lambda(AA^* + BB^*)$ only requires
that the origin not be stationary or, if no other starting points results in a value of $\gamma$
less than $f(0,\theta)$, that the initial value of $\gamma$ be set slightly less than $f(0,\theta)$.
In lines 1--3, $\mathcal{D}$ should be initially set to $(-\pi,\pi]$ and reduced to $[0,\pi]$
if the level sets have real-axis symmetry, per the conditions given in \cref{thm:dtu_symmetry}.
Throughout the pseudocode and accompanying note, \eqref{eq:kinv_cont} and $\sgfn(\theta)$
should be replaced by \eqref{eq:dtu} and $\sffn(\theta)$, respectively,
and $0.5(\theta_l + (\theta_1 + 2\pi))$ should also be included
when doing the additional check described in the note.
In lines 13 and 23, 
``$\imagunit r \in \Lambda(M,N_{\theta_j})$ defined in \eqref{eq:eigMN_kc} with $r > 0$"
should be replaced with 
``$\imagunit r \in \Lambda(C,D_{\theta_j})$  defined in \eqref{eq:eigCD_dtu} with $r > 0$".
For increased efficiency, 
$\sffn(\theta)$ should be evaluated in analogous manner to that described in \S\ref{sec:fast_eval}
for $\sgfn(\theta)$.
The $\bigO(kn^3)$ work complexity and $\bigO(n^2)$ memory characteristics again hold.

\section{Numerical experiments}
\label{sec:experiments}
To validate our new interpolation-based globality certificates, we implemented a proof-of-concept of \cref{alg:interp}
in \matlab\ and compared it against the existing state-of-the-art methods of \cite{Mit19} for continuous- and discrete-time Kreiss constants
and \cite{GuMOetal06} for the distance to uncontrollability.
Experiments were performed in MATLAB R2017b using a computer with two Intel Xeon Gold 6130 processors (16 cores each, 32 total) and 192GB of RAM.
The supplementary material includes code, test examples, and a detailed description of both our implementation and 
experimental setup for reproducibility of all results, tables, and figures in the paper;
for brevity, we only give essential details here.
We plan to add ``production-ready" implementations of \cref{alg:interp} for $\Kcon$ and $\dtu$
to a future release of ROSTAPACK~\cite{rostapack}.

For implementing \cref{alg:interp}, \texttt{fminunc} was used for finding local minimizers,
while $\sgfn(\theta)$, $\shfn(\theta)$, and $\sffn(\theta)$ were evaluated using \texttt{eig}.
Per \S\ref{sec:fast_eval} on efficient evaluation, 
our code first attempts to compute the values of $\sgfn(\theta)$, $\shfn(\theta)$, and $\sffn(\theta)$
via the standard eigenvalue problem formulations in \eqref{eq:eigM_kc}, \eqref{eq:eigS_kd}, and \eqref{eq:eigM_dtu}.
For simplicity, our prototype code only resorts to using the generalized eigenvalue problems in \eqref{eq:eigMN_kc}, \eqref{eq:eigST_kd}, and \eqref{eq:eigCD_dtu}
when infs, nans, or errors are encountered;
for a more robust implementation, eigenvalues of these sHH matrix pencils should be computed using a structure-preserving eigensolver such as \cite{BenBMetal02,BenSV16},
and this should also be done whenever the computed values of $\sgfn(\theta)$, $\shfn(\theta)$, and $\sffn(\theta)$ (when using \texttt{eig} and the standard eigenvalue problems) are close to zero.
For approximating $\sgfn(\theta)$, $\shfn(\theta)$, and $\sffn(\theta)$, we used Chebfun,
a sophisticated and efficient toolbox for 
``computing with functions to about 15-digit accuracy"\footnote{The quote is taken from the homepage of \url{http://www.chebfun.org}.}
that it is also adept at handling nonsmooth functions when its \texttt{splitting} option is enabled.
To replicate the design of \cref{alg:interp}, where optimization is restarted when zeros of $\sgfn(\theta)$, $\shfn(\theta)$, and $\sffn(\theta)$
are encountered, our code simply throws and catches an error in order to halt Chebfun; 
this allows us to immediately restart optimization without letting Chebfun finish building a high-fidelity approximation and requires no modifications
to Chebfun itself.
Our prototype attempts to restart optimization using one or more of the detected level-set points but not necessarily all;
a more robust implementation might first check whether or not any of these points are non-stationary before deciding to halt Chebfun early.
When Chebfun does build an approximation without encountering zeros, our prototype 
does the additional global convergence checks described in \cref{alg:interp} (lines 20--27 and its accompanying note).
Finally, our code terminates if none of the new starting points leads to a meaningful decrease in the estimate $\gamma$,
i.e., if the relative improvement in $\gamma$ is less than $10^{-14}$.
This additional test is necessary in practice as optimization software will generally
 not compute minimizers exactly and our interpolation-based globality certificates may 
still detect level-set points when a global minimizer has been found to numerical precision.

\begin{remark}
To demonstrate our interpolation-based globality certificates and encourage multiple restarts, 
we intentionally chose starting points such that only local, not global, minimizers would be found 
on the first round of optimization.
Moreover, our prototype performs a new globality certificate as soon as
optimization results in a relative decrease of $10^{-6}$ or more from the value of $\gamma$ for the preceding certificate.
As such, some detected level-set points may be ignored, which if used, could have led to larger decreases
in $\gamma$.
In practice, it will likely be more efficient to run optimization from all or at least many of the detected level-set points, 
to avoid making unnecessarily small updates.
For similar reasons, more than a single starting point should be used.  
\end{remark}

\begin{remark}
For conceptual simplicity, we have so far intentionally omitted a few other notable implementation details,
which we now briefly describe.
First, in Algorithm~\ref{alg:interp}, to account for rounding error, 
the interpolation-based globality certificates should not be done with the 
value of $\gamma$ computed in line~6 but rather $(1 - \texttt{tol})\gamma$
for a relative tolerance $\texttt{tol} \in (0,1)$, e.g., $\texttt{tol} = 10^{-14}$.
Second, for continuous-time $\Kcon$ and $\dtu$, when the level sets do not have real-axis symmetry,
it may be beneficial to shift the ``search point" from the origin, e.g., for $\Kcon$, one might instead 
use the average of the imaginary parts of the eigenvalues of $A$.  
Finally, for discrete-time $\Kcon$ and $\dtu$, there is an additional technique that
can often provide an additional factor-of-two speedup.
By noting that both $\shfn(\theta)$ and $\shfn(\theta + \pi)$ can be computed via 
a single computation of the spectrum of $(S,T_\theta)$,
our interpolation-based globality certificates can be computed by approximating 
$\min \{ \shfn(\theta), \shfn(\theta + \pi) \}$ over half of the domain $\mathcal{D}$,
and the same can be analogously done when computing $\dtu$.
For simplicity in the comparisons here, we forgo using this additional optimization. 
\end{remark}

\subsection{Comparisons to earlier methods}
\label{sec:comp}
Since we address parallel computation in \S\ref{sec:par}, 
here we consider a single-core evaluation of all the methods.  
We did this by calling \texttt{parpool(1)} in \matlab\ and by not using any \texttt{parfor} loops.
The test problems, whose dimensions are listed in \cref{table:overall}, are as follows.
For continuous-time $\Kcon$, \texttt{companion}~(stab.)\ is the stabilized EigTool example we used to generate \cref{fig:kinv_cont_ex},
while \texttt{boeing('S')} and \texttt{orrsommerfeld} are directly from EigTool.
For discrete-time $\Kcon$, \texttt{convdiff} (mod.)\ is the modified EigTool example we used to generate \cref{fig:kinv_disc_ex},
while \texttt{randn}~\#1~(stab.)\ and \texttt{randn}~\#2~(stab.)\ are randomly generated stable complex matrices, scaled so their spectral radii are 0.999.
While these discrete-time examples have very low Kreiss constants, they are useful for demonstration as $h(r,\theta)$ 
has multiple different local minima for each of them.
For $\dtu$, \cite[Section~4.3]{GuMOetal06} used real-valued examples generated by setting $A$ to different 
sizes of the \texttt{kahan} demo from EigTool and $B = \texttt{randn(}\n\texttt{,}m\texttt{)}$;
for our experiments here, we generated two such examples using larger values of $n$ and $m$, namely, 
\texttt{kahan}~($m=20$)\ and \texttt{kahan}~($m=30$).
Of the eight examples, \texttt{randn}~\#1~(stab.)\ and \texttt{randn}~\#2~(stab.)\ do not have level sets with real-axis symmetry,
while the others do.

\begin{table}[t]
\setlength{\tabcolsep}{3.84pt}
\centering
\small
\begin{tabular}{ l | rr | l | r | rr  } 
\toprule
\multicolumn{1}{c}{} & \multicolumn{1}{c}{}  & \multicolumn{1}{c}{} & \multicolumn{1}{c}{} & 
\multicolumn{3}{c}{Time (sec.)} \\ 
\cmidrule(lr){5-7}
\multicolumn{1}{l}{Problem} & 
	\multicolumn{1}{c}{$\n$} & 
	\multicolumn{1}{c}{$z_0$} &
	\multicolumn{1}{c}{Computed Value} & 
	\multicolumn{1}{c}{New} &
	\multicolumn{1}{c}{2D Fixed} &
	\multicolumn{1}{c}{2D Vari.} \\
\midrule
\multicolumn{5}{l}{$\Kcon$ (continuous)} & \multicolumn{2}{c}{Single LS Test}\\
\midrule
\texttt{companion} (stab.) & 10   & $6{+}6\imagunit$   & $1.29186707013556 \times 10^{5}  $ & $   0.5$ &        0.5 &        1.0 \\ 
\texttt{boeing('S')}       & 55   & $1{+}50\imagunit$  & $3.62541052800213 \times 10^{4}  $ & $   6.1$ &     6226.5 &     3446.7 \\ 
\texttt{orrsommerfeld}     & 100  & $10{+}10\imagunit$ & $3.93230474282055 \times 10^{1}  $ & $ 149.6$ &   170547.2 &   197426.0 \\ 
\midrule
\multicolumn{5}{l}{$\Kcon$ (discrete)} & \multicolumn{2}{c}{Single LS Test}\\
\midrule
\texttt{convdiff} (mod.)   & 10   & $-1{+}1\imagunit$  & $1.89501339090580 \times 10^{0}  $ & $   1.8$ &        0.9 &        0.9 \\ 
\texttt{randn} \#1 (stab.) & 50   & $1{+}1\imagunit$   & $1.75843606578311 \times 10^{0}  $ & $ 128.7$ &     3324.0 &     3248.0 \\ 
\texttt{randn} \#2 (stab.) & 100  & $1{-}1\imagunit$   & $2.35849495574647 \times 10^{0}  $ & $1223.8$ &    \multicolumn{2}{r}{--- \texttt{out-of-mem} ---} \\ 
\midrule
\multicolumn{5}{l}{$\dtu$} & \multicolumn{2}{c}{Full D\&C Alg.}\\
\midrule
\texttt{kahan} ($m=20$)    & 60   & $0{+}0\imagunit$   & $3.88211512261161 \times 10^{-2} $ & $  51.1$ &      246.2 & \multicolumn{1}{c}{---} \\ 
\texttt{kahan} ($m=30$)    & 150  & $0{+}0\imagunit$   & $1.82581469530120 \times 10^{-2} $ & $ 644.4$ &    27454.4 & \multicolumn{1}{c}{---} \\ 
\bottomrule
\end{tabular}
\caption{The eight problems tested.  The size of the matrix $A$ is given by $n$, while $z_0$ is the initial point used for the first round of optimization.
The  values of $\Kcon$ and $\dtu$ computed by \cref{alg:interp}  are given under ``Computed Value".
Elapsed wall-clock times (in seconds) are given in the three rightmost columns.
For  \cref{alg:interp}, the total running times are reported under ``New".
For Kreiss constants, rather than running the complete algorithms of \cite{Mit19},
we only recorded the time to perform a single 2D level-set test (``Single LS Test" in the table) for each problem.
Consequently, these times greatly underreport the actual costs to run the full algorithms of \cite{Mit19}.
As the methods of \cite{Mit19} either use fixed- or variable-distance 2D level-set tests, 
times are reported for both types, respectively under ``2D Fixed" and ``2D Vari.", except
for \texttt{randn} \#2 (stab.), where out-of-memory errors occurred.
For $\dtu$, for which only fixed-distance 2D level-set tests are relevant, 
the times to compute $\dtu$ using the complete divide-and-conquer method of \cite{GuMOetal06} (``Full D\&C Alg." in the table) are reported.
}
\label{table:overall}
\end{table}

For computing Kreiss constants, we compare the efficiency of \cref{alg:interp} with the earlier 2D level-set methods of \cite{Mit19},
using the code provided in the supplementary material of \cite{Mit19}.
However, the running times we report in \cref{table:overall} are \emph{not} for the complete algorithms of \cite{Mit19}, 
but rather just the time to perform a single 2D level-set test.
Recall that the methods of \cite{Mit19} always require performing at least one 2D~level-set test,
and these tests are the dominant cost, with $\bigO(n^6)$ work when using dense eigensolvers.
Thus, it suffices to time a single level-set test for each method of \cite{Mit19}.
We did not use the asymptotically faster divide-and-conquer versions from \cite{Mit19}, 
as they appear to be less reliable when computing Kreiss constants; see \cite[Section~8]{Mit19}.
For continuous-time $\Kcon$, the generalized eigenvalue problems that appear in the fixed- and variable-distance 2D level-set tests of \cite{Mit19}
were solved with \texttt{eig}, while the corresponding quadratic eigenvalue problems for the discrete-time $\Kcon$ tests
were solved with \texttt{polyeig}.
In \cref{table:overall}, we see that for the small ($n=10$) examples, \texttt{companion}~(stab.)\ and \texttt{convdiff}~(mod.),
the total running time of \cref{alg:interp} is comparable with the cost of a single 2D level-set of \cite{Mit19},
but our new approach is much faster for larger dimensions.
In fact, for the other continuous-time $\Kcon$ examples, \texttt{boeing('S')} ($n=55$) and \texttt{orrsommerfeld} ($n=100$), 
\cref{alg:interp} is generally over 1000 times faster 
than a single 2D level-set test.  For the two \texttt{randn}-based discrete-time $\Kcon$ examples, \cref{alg:interp} is roughly 
25~times faster than a single 2D level-set test for $n=50$, while 
it was not even possible to time the discrete-time 2D level-set tests for $n=100$, since \texttt{polyeig} immediately ran out of memory (on a computer with a 192 GB of RAM).
To compare accuracy of the methods, we only considered the two small examples (both $n=10$), 
due to the high cost of running the 2D level-set-based methods for larger $n$.
The $\Kcon$ estimates computed by \cref{alg:interp} in \cref{table:overall} for 
\texttt{companion}~(stab.)\ and  \texttt{convdiff}~(mod.)\ respectively agree to 11 and 15 digits 
to the corresponding values reported in \cite[Table~1]{Mit19} for the optimization-with-restart methods of \cite{Mit19}.
The slight discrepancy for  \texttt{companion}~(stab.)\ is almost certainly due to the fact that optimization solvers do not find minimizers exactly, 
and so there will generally be some variability in the least significant digits of $\gamma$.
This can likely be dealt with via tighter tolerances, using different optimization solvers, and/or using more starting points per restart.

For computing the distance to uncontrollability, we compared \cref{alg:interp} with the divide-and-conquer-based 
method of \cite{GuMOetal06}.  Since divide-and-conquer has an asymptotic work complexity of $\bigO(\n^4)$ on average and $\bigO(\n^5)$ in the worst case,
it was feasible to run the full method of \cite{GuMOetal06} on our test problems;
specifically, we compared against the \texttt{dist\_uncont\_hybrid} 
routine\footnote{Available at \url{http://home.ku.edu.tr/~emengi/software/robuststability.html}},
which uses BFGS for optimization and divide-and-conquer 2D level-set tests when 
\texttt{opts.method=1} and \texttt{opts.eig\_method=1} are set.
For the smaller \texttt{kahan}-based example ($n=60$, $m=20$), \cref{alg:interp}
is 4.8 times faster than  \texttt{dist\_uncont\_hybrid}, while for the larger example ($n=150$, $m=30$),
\cref{alg:interp} is 42.6 times faster.
We expect that this performance gap will generally widen more as $n$ increases.
The $\dtu$ estimates computed by \cref{alg:interp} for \texttt{kahan} ($m=20$)\ and \texttt{kahan} ($m=30$)\ respectively agreed to 12 and 13 digits
with those computed by \texttt{dist\_uncont\_hybrid}, with our new method returning the (slightly) smaller answers for both.

\begin{table}
\centering
\small
\begin{tabular}{ l | rc | rc | rc | rc } 
\toprule
\multicolumn{1}{c}{} & \multicolumn{8}{c}{\# of $\theta$'s evaluated per certificate and rel. diff. in $\gamma$} \\
\cmidrule(lr){2-9}
\multicolumn{1}{l}{Problem} & 
	\multicolumn{2}{c}{Restart 1} & 
	\multicolumn{2}{c}{Restart 2} & 
	\multicolumn{2}{c}{Restart 3} & 
	\multicolumn{2}{c}{Restart 4} \\
\midrule
\texttt{companion} (stab.) &            $15$  & \texttt{1e-02}  &   $\textbf{389}$ &            ---  &              --- &            ---  &              --- &            --- \\ 
\texttt{boeing('S')}       &            $15$  & \texttt{7e-01}  &            $15$  & \texttt{7e-01}  &   $\textbf{535}$ &            ---  &              --- &            --- \\ 
\texttt{orrsommerfeld}     &            $15$  & \texttt{9e-01}  &  $\textbf{3048}$ &            ---  &              --- &            ---  &              --- &            --- \\ 
\midrule
\texttt{convdiff} (mod.)   &            $15$  & \texttt{3e-01}  &            $15$  & \texttt{4e-02}  &            $31$  & \texttt{3e-02}  &  $\textbf{4084}$ & \texttt{1e-15} \\ 
\texttt{randn} \#1 (stab.) &            $63$  & \texttt{1e-01}  & $\textbf{12448}$ &            ---  &              --- &            ---  &              --- &            --- \\ 
\texttt{randn} \#2 (stab.) &            $15$  & \texttt{2e-01}  &            $15$  & \texttt{2e-01}  &           $127$  & \texttt{1e-01}  & $\textbf{18672}$ &            --- \\ 
\midrule
\texttt{kahan} ($m=20$)    &            $15$  & \texttt{7e-01}  &  $\textbf{3529}$ &            ---  &              --- &            ---  &              --- &            --- \\ 
\texttt{kahan} ($m=30$)    &            $15$  & \texttt{6e-01}  &            $15$  & \texttt{2e-01}  &  $\textbf{6246}$ &            ---  &              --- &            --- \\ 
\bottomrule
\end{tabular}
\caption{For each restart using our new interpolation-based globality certificates, 
the left number is the total number of points at which Chebfun evaluated $\sgfn(\theta)$, $\shfn(\theta)$, or $\sffn(\theta)$ 
for the current estimate $\gamma$ until either new starting points were found (which immediately restarts optimization) or Chebfun terminated on its own;
bold font indicates the last certificate computed.
The right number is the relative difference obtained by the next round of optimization to lower $\gamma$.
Note that for \texttt{convdiff}~(mod.), the last certificate actually produced new starting points 
but optimization was unable to meaningfully lower estimate $\gamma$ further, and so our code terminated after a round of 
optimization instead of after a certificate test.
}
\label{table:restarts}
\end{table}

In Table~\ref{table:restarts}, we show the number of points at which Chebfun evaluates $\sgfn(\theta)$, $\shfn(\theta)$, or $\sffn(\theta)$ (as appropriate)
for each interpolation-based globality certificate that is performed.
As can be seen, before a global minimizer is obtained, relatively few values of $\theta$ are evaluated by Chebfun
before new starting points are discovered and optimization commences again, demonstrating that high-fidelity 
approximations are indeed only needed once a global minimizer has been found.
Furthermore, as hoped, the number of function evaluations needed to build the final interpolants 
does not dramatically increase as the problems get larger.  
The number of functions evaluations is instead correlated with how complex  
$\sgfn(\theta)$, $\shfn(\theta)$, and $\sffn(\theta)$ are, which is not necessarily related to the problem dimension.
In Figure~\ref{fig:certs}, we plot $\sgfn(\theta)$, $\shfn(\theta)$, and $\sffn(\theta)$ for the final values of $\gamma$ computed by 
\cref{alg:interp} for three of our examples.

\def\scaledfn{0.34}
\begin{figure}[htbp]
\centering
\subfloat[\texttt{boeing('S')}: $\sgfn(\theta)$ in linear scale (left) and in $\log_{10}$ scale (right)]{
\includegraphics[scale=\scaledfn,trim={0.3cm 0.05cm 0.6cm 0.7cm},clip]{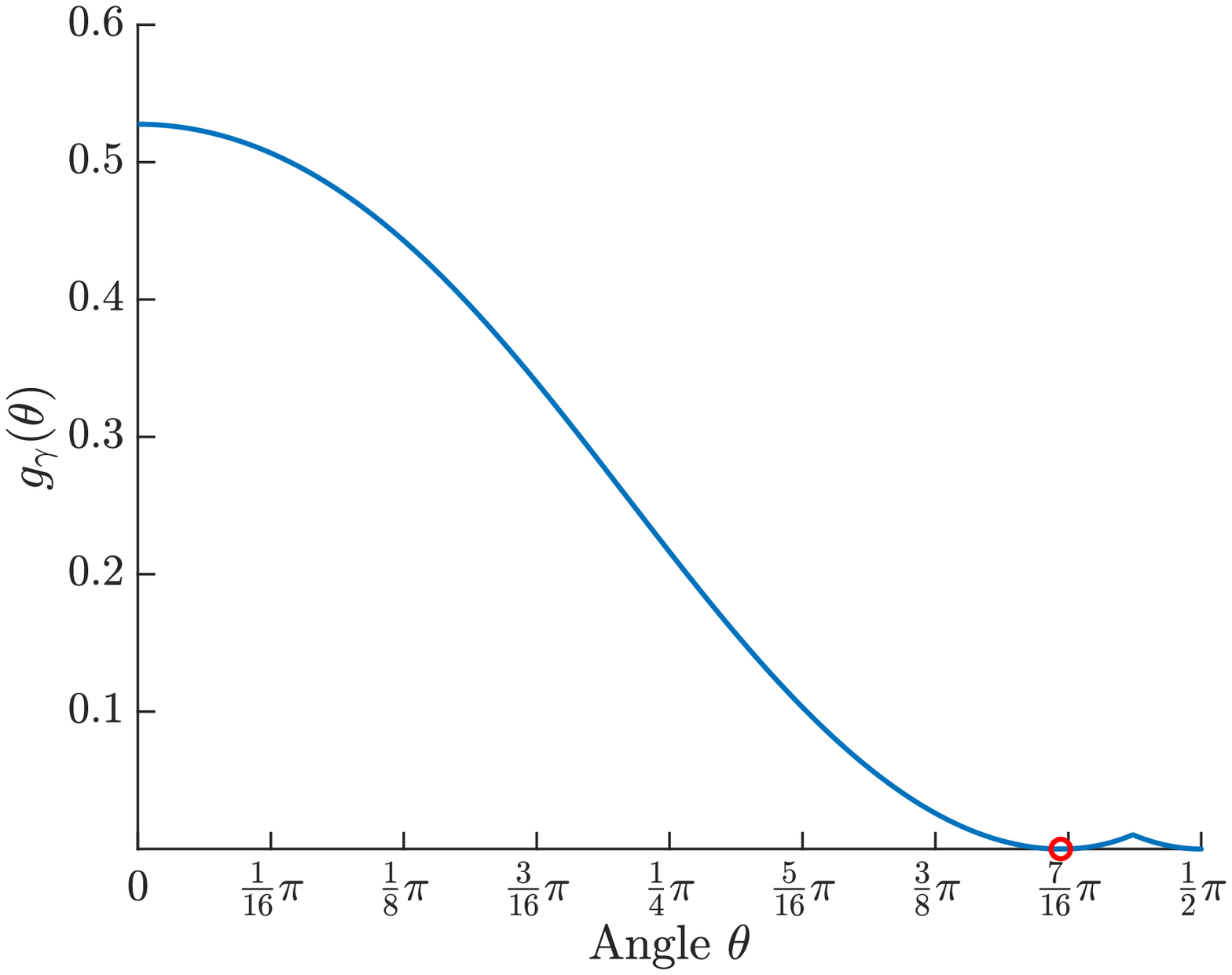} 
\includegraphics[scale=\scaledfn,trim={0.2cm 0.05cm 1.5cm 0.7cm},clip]{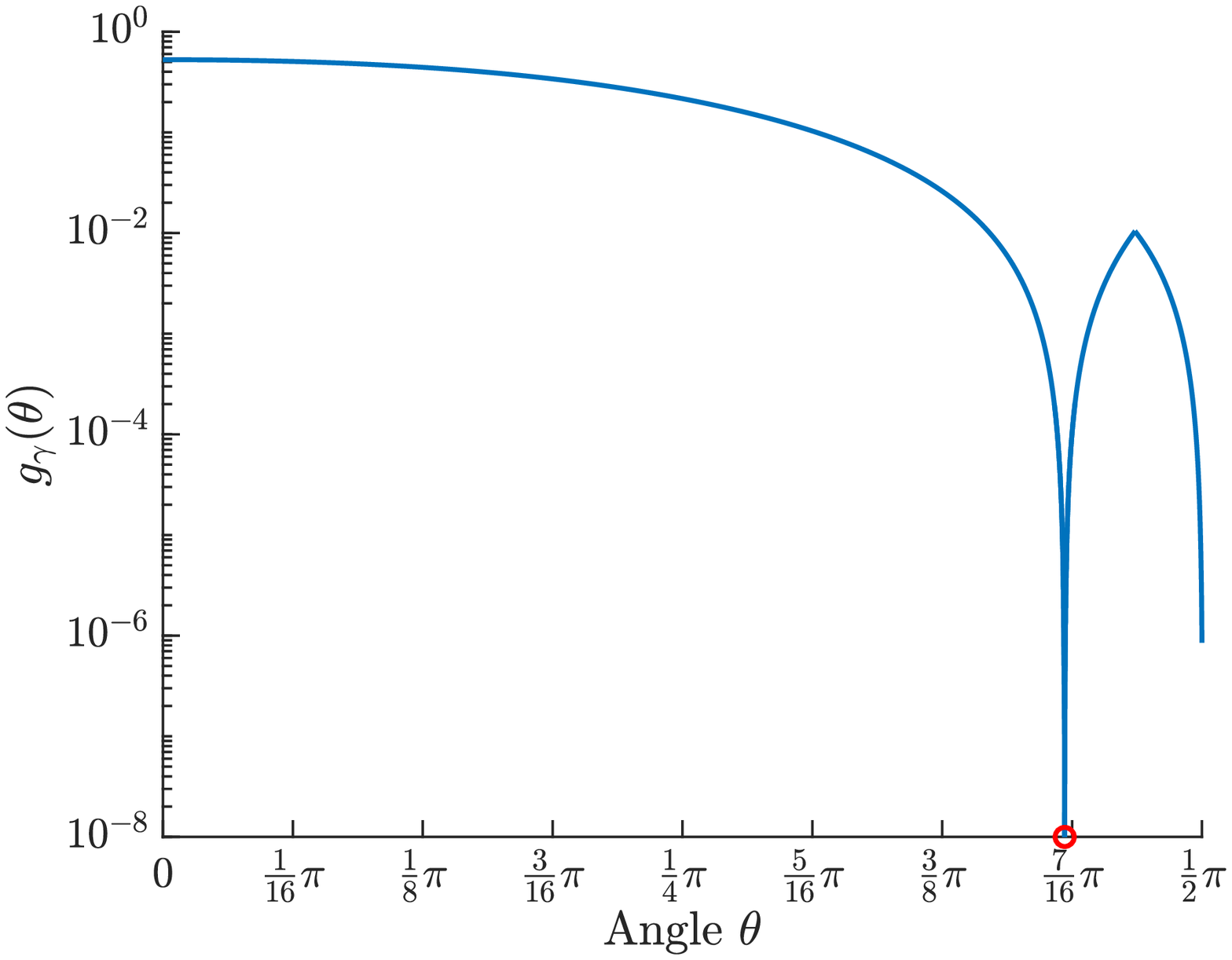} 
\label{fig:cert_boeing}
} 
\\
\subfloat[\texttt{randn} \#2: $\shfn(\theta)$ in linear scale (left) and in $\log_{10}$ scale (right)]{
\includegraphics[scale=\scaledfn,trim={0.3cm 0.05cm 0.6cm 0.7cm},clip]{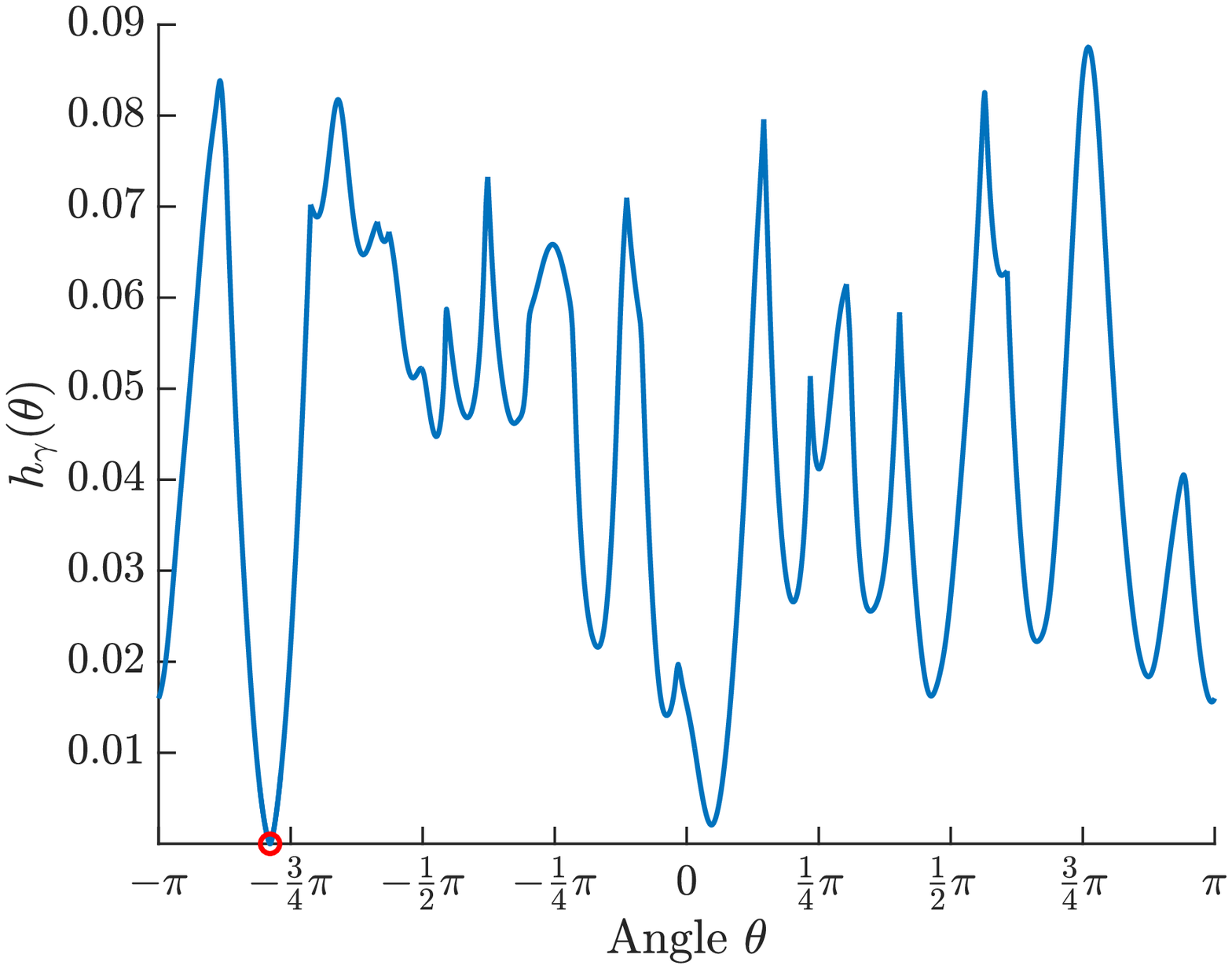} 
\includegraphics[scale=\scaledfn,trim={0.2cm 0.05cm 1.5cm 0.7cm},clip]{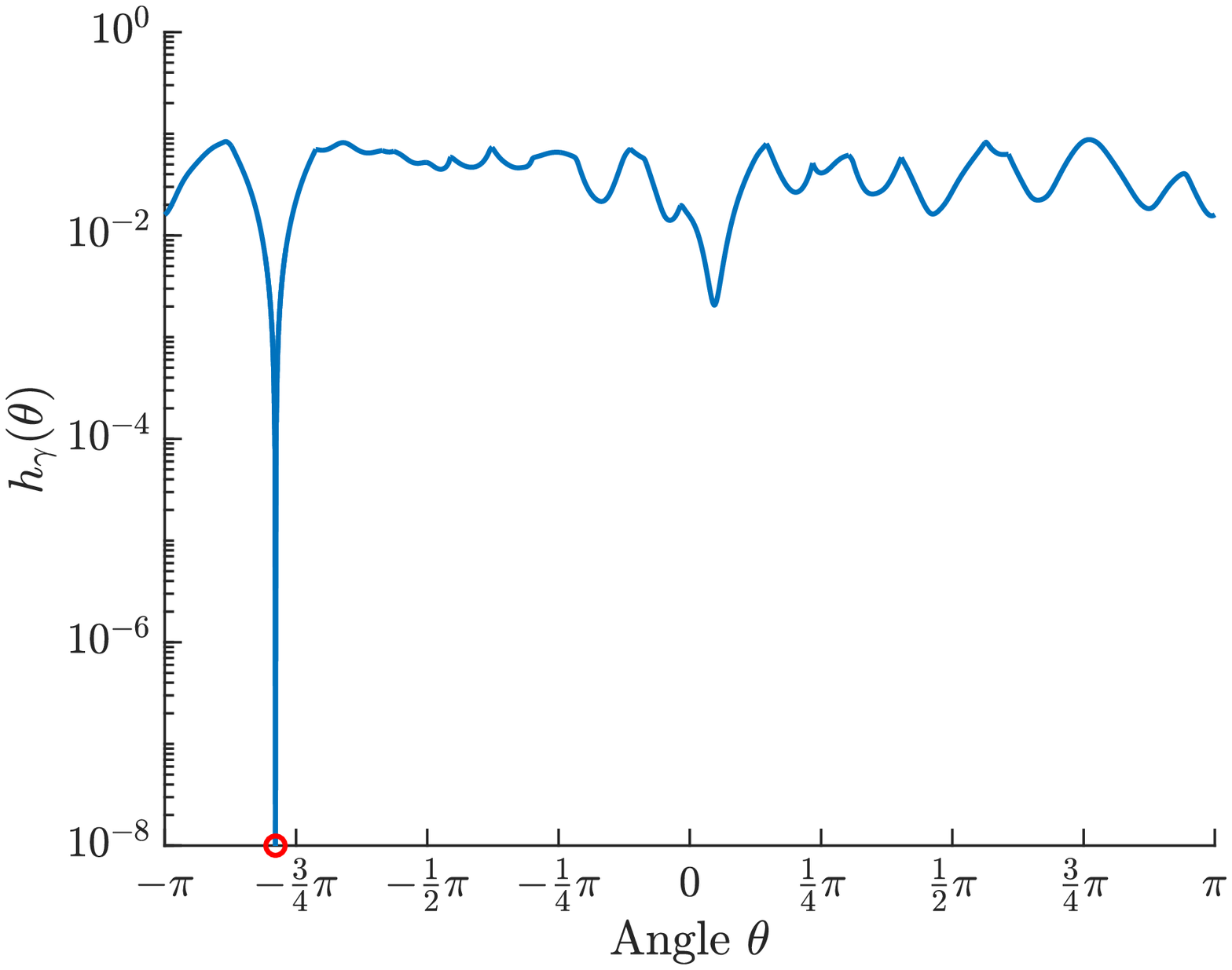}
\label{fig:cert_randn2}
}
\\
\subfloat[\texttt{kahan} ($m=30$): $\sffn(\theta)$ in linear scale (left) and in $\log_{10}$ scale (right)]{
\includegraphics[scale=\scaledfn,trim={0.3cm 0.05cm 0.6cm 0.7cm},clip]{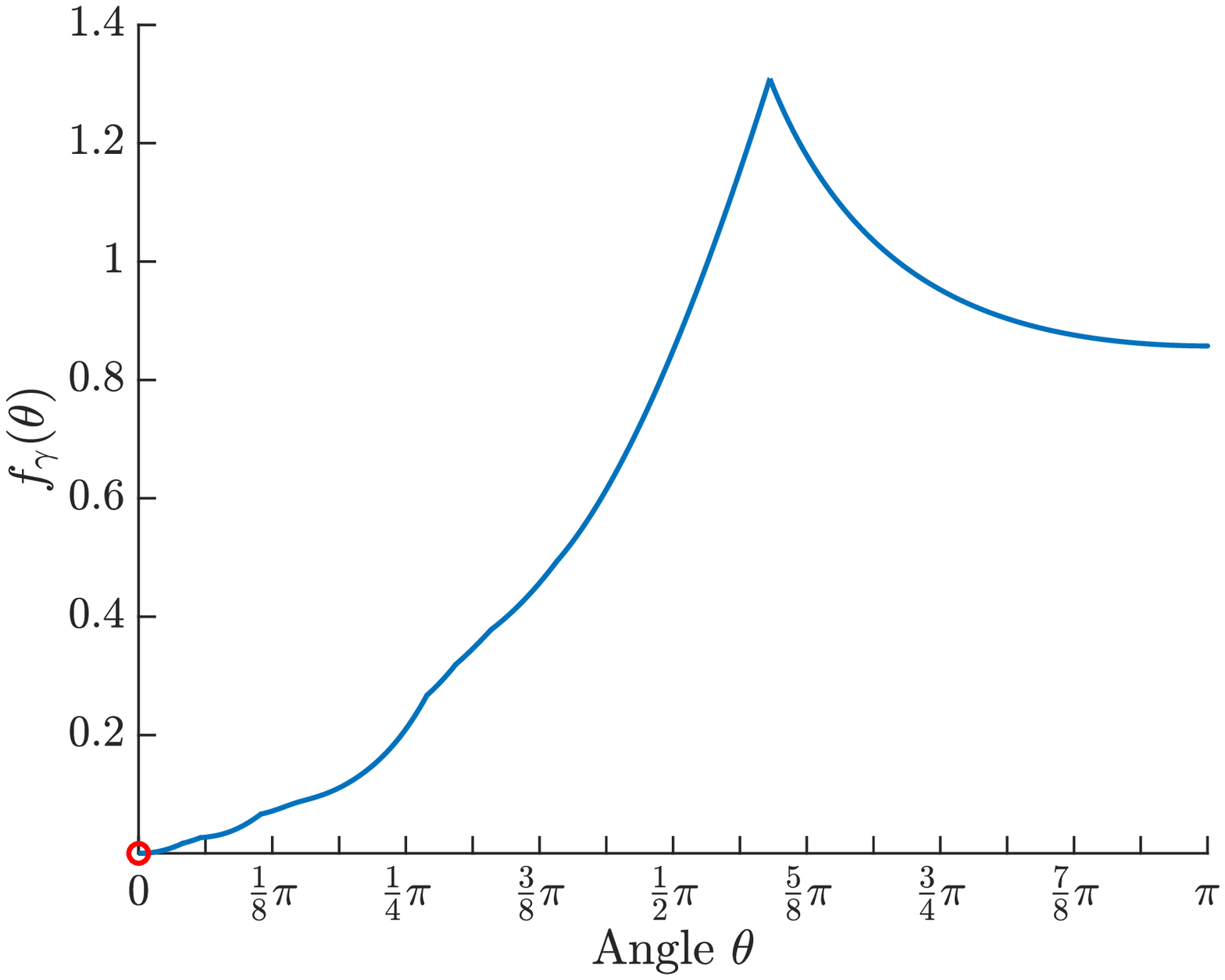} 
\includegraphics[scale=\scaledfn,trim={0.2cm 0.05cm 1.5cm 0.7cm},clip]{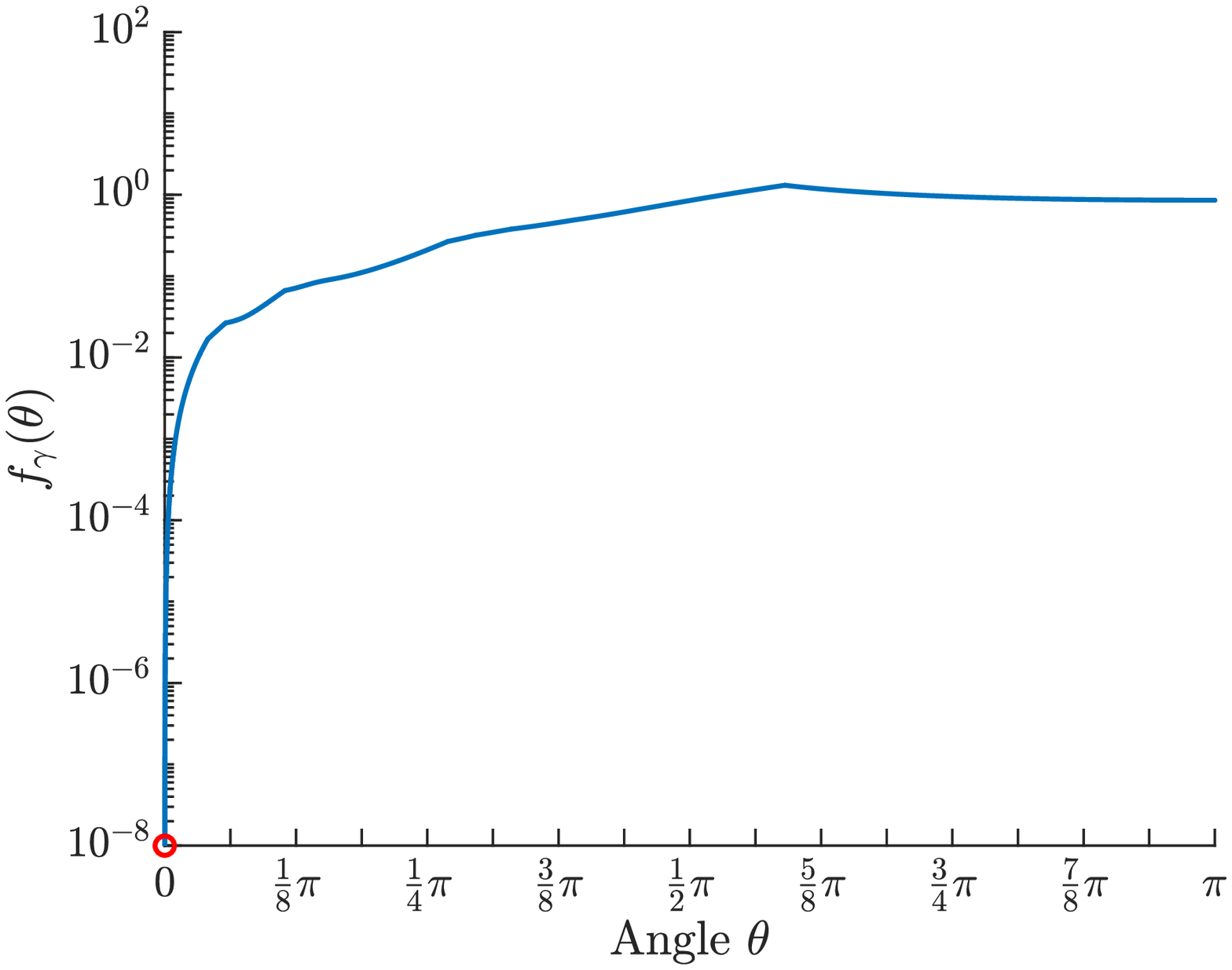} 
\label{fig:cert_kahan}
}
\caption{
The top two plots show $\sgfn(\theta)$ at the final value of $\gamma$ computed by our 
new method for the \texttt{boeing('S')} example, in linear and $\log_{10}$ scales.
The circle denotes the angle of the best minimizer obtained by optimization and corresponds to 
the single place where $\sgfn(\theta) = 0$ (which is more easily seen in the $\log_{10}$ plot on the right),
confirming that $\gamma$ is the globally minimal value.
The same is done for $\shfn(\theta)$ and \texttt{randn} \#2 in the middle plots
and for $\sffn(\theta)$ and \texttt{kahan} ($m=30$) in the bottom plots.
}
\label{fig:certs}
\end{figure}

\subsection{Additional acceleration via parallel processing}
\label{sec:par}
The main components of \cref{alg:interp} are ``embarrassingly parallel."
Optimization can be run from multiple starting points in parallel, to hopefully find
a global minimizer on any given iteration without increasing runtime.
For our interpolation-based globality certificates, any time Chebfun provides a vector of different values of $\theta$, 
obtaining the corresponding function values of $\sgfn(\theta)$, $\shfn(\theta)$, or $\sffn(\theta)$
is also ``embarrassingly parallel."
We focus on this latter task, as it is the dominant cost
and it is dependent on the average size of vectors provided by Chebfun.
To obtain speedup data, we recomputed the final certificates for our three largest problems, where \texttt{parpool(cores)} was called with $\texttt{cores}$ set to 2, 4, 8, 16, and 32, and the values of $\sgfn(\theta)$, $\shfn(\theta)$, and $\sffn(\theta)$ were computed inside a \texttt{parfor} loop.
We did these tests with the Chebfun preference \texttt{'min\_samples'} retained at its default value of 17
and with it increased to~65, comparing speedups with respect to our single-core configuration used in \S\ref{sec:comp}.

\begin{table}[t]
\centering
\small
\begin{tabular}{ l | ccccc  | rD{.}{.}{-1} } 
\toprule
\multicolumn{1}{c}{} & \multicolumn{5}{c}{Speedup per \# of cores} &  \multicolumn{2}{c}{Vector of $\theta$'s} \\
\cmidrule(lr){2-6}
\cmidrule(lr){7-8}
\multicolumn{1}{l}{Problem} & 
	\multicolumn{1}{c}{2} & 
	\multicolumn{1}{c}{4} & 	
	\multicolumn{1}{c}{8} & 
	\multicolumn{1}{c}{16} & 
	\multicolumn{1}{c}{32} & 
	\multicolumn{1}{c}{\#} & 
	\multicolumn{1}{c}{Avg. Size} \\
\midrule
\multicolumn{8}{l}{Chebfun \texttt{min\_samples}: 17} \\
\midrule
\texttt{orrsommerfeld}     &  2.4 &  3.6 &  4.9 &  6.4 &  5.7 &  184 & 16.6 \\ 
\texttt{randn} \#2 (stab.) &  2.9 &  4.5 &  6.3 &  8.5 &  9.0 &  797 & 21.8 \\ 
\texttt{kahan} ($m=30$)    &  2.6 &  3.9 &  5.5 &  7.7 &  8.6 &  430 & 14.5 \\ 
\midrule
\multicolumn{8}{l}{Chebfun \texttt{min\_samples}: 65} \\
\midrule
\texttt{orrsommerfeld}     &  2.6 &  3.8 &  5.3 &  6.6 &  6.1 &  162 & 20.1 \\ 
\texttt{randn} \#2 (stab.) &  3.0 &  4.9 &  6.9 &  9.1 &  8.8 &  608 & 29.3 \\ 
\texttt{kahan} ($m=30$)    &  2.7 &  4.3 &  6.2 &  8.3 &  8.8 &  385 & 17.3 \\ 
\bottomrule
\end{tabular}
\caption{Speedups with respect to the number of $\theta$'s evaluated per second 
while Chebfun is building the final interpolant for the three largest problems;
the reason speedups are not with respect to overall running time is because the total number of function evaluations Chebfun 
needed was not always the same as the number of cores was changed.
The last two columns, ``\#" and ``Avg.\ Size", respectively show the number of times Chebfun requested a vector of different 
values of $\theta$ to be evaluated and the average length of these vectors.
These average lengths give upper bounds on the best possible speedups, while
the pair of values together show that there is likely high overhead due to entering and exiting the \texttt{parfor} loop many times
in order for Chebfun to evaluate more and more points.
}
\label{table:parallel}
\end{table}

In \cref{table:parallel}, the best speedups range from 6.6 to 9.1, a significant boost.
While this is not high utilization of 32 cores,
the average number of $\theta$ values provided at a time by Chebfun,
which was often about 15 to 20, is an upper limit on achievable speedup.  
As reported in the \# column of \cref{table:parallel}, the parallel region of our code
is entered and exited hundreds of times, which comes with a very high overhead.  

Since varying \texttt{'min\_samples'} had little impact on performance and the total number of vectors, 
we analyzed the Chebfun code to determine how its amenability to parallelization might be improved. 
 Perhaps the biggest influence
is the \texttt{findJump} routine inside \texttt{@fun/detectEdge.m}, which does bisection
to detect singularities and thus requests only a single function value per iteration, for many iterations.
We modified \texttt{findJump} to instead do $k$-sectioning for integers $k > 2$ and found that our new version dramatically increased
the overall average vector length if $k$ was sufficiently large, 
as it also dramatically reduced the number of iterations \texttt{findJump} needed.
Another cause is related to the fact that Chebfun often approximates functions, particularly nonsmooth ones, 
not by a single polynomial interpolant but a concatenation of them.  
For each piece, a final safety test for accuracy (\texttt{@chebtech/sampleTest.m})
is done by evaluating a pair of \emph{hard-coded} points in the interval the piece is approximating over.
This too can keep the average vector length low and increase the total number of vectors.  
For parallel processing, it would be more efficient to 
speculatively evaluate these two fixed values for each piece, by
batching them in with first vector of initial sample points and storing this pair of function values for recall later.

\begin{remark}
Parallel eigensolvers such as \cite{BenKS14} could also be used to accelerate 
solving the large eigenvalue problems in the 2D level-set tests of \cite{Mit19} and \cite{Gu00},
but this would not reduce their high memory requirements nor does it seem likely that
this would be competitive with our interpolation-based certificates even using serial computation, let alone 
parallel computation.
\end{remark}

\section{Concluding remarks}
\label{sec:conclusion}
We have seen that our new interpolation-based globality certificates are generally orders of magnitudes more 
efficient than the existing techniques of \cite{Mit19} for Kreiss constants and 
those of \cite{Gu00,GuMOetal06} for the distance to uncontrollability.
While our new approach assumes that $\sgfn(\theta)$, $\shfn(\theta)$, and $\sffn(\theta)$ are adequately sampled to find their zeros,
this seems a rather mild assumption in practice, as per \cref{thm:props_kc,thm:props_kd,thm:props_dtu}, 
they will be zero on sets of \emph{positive measure} before a global minimizer has been obtained.  
The nature of our adequate interpolation assumption is quite different than the exact arithmetic assumption
used in the earlier methods of \cite{Gu00,GuMOetal06,Mit19}, and we believe it to be a more pragmatic choice,
both in terms of efficiency and reliability.
Finally, while in this paper we have considered the three specific problems of computing continuous-time Kreiss constants,
discrete-time Kreiss constants, and the distance to uncontrollability, we again emphasize 
our new approach of interpolation-based globality certificates
is for general global optimization problems of singular value functions in two real variables.
In fact, after submitting this manuscript, we have since used the idea of interpolation-based globality certificates 
to obtain a new algorithm for computing ``sep-lambda" \cite{Mit19b} that is much faster than the method of \cite{GuO06a}.
However, there are many fundamental differences in this case, 
both in the nature of the associated global optimization problem and our resulting algorithm.

\section*{Acknowledgments}
The author is very grateful to Michael L. Overton
for supporting several research visits to the Courant Institute in New York
and for many helpful comments on this manuscript.
The author also thanks the anonymous referees for reviewing the paper and their helpful feedback.

\bibliographystyle{alpha}
\bibliography{csc,software}

\end{document}